\documentclass[oneside,english]{amsart}
\usepackage{amsmath}
\usepackage{mathtools}
\mathtoolsset{showonlyrefs=true}

\usepackage{lmodern}

\usepackage[T1]{fontenc}
\usepackage[latin9]{inputenc}
\synctex=1
\usepackage{mathrsfs}
\usepackage{amsbsy}
\usepackage{amstext}
\usepackage{amsthm}
\usepackage{amssymb}
\usepackage{esint}
\usepackage{setspace}
\usepackage{xcolor}
\usepackage[normalem]{ulem}
\usepackage{comment}
 \usepackage[unicode=true, 
  bookmarks=true,bookmarksnumbered=false,bookmarksopen=true,bookmarksopenlevel=1,
  breaklinks=false,pdfborder={0 0 1},backref=false,colorlinks=false]
  {hyperref}
\usepackage{enumitem}
\usepackage{bm} 
\usepackage{bbm} 

\usepackage{fancyvrb}
 \usepackage{cprotect}

 \numberwithin{equation}{section}

\newcommand{\bbd}{\mathbbm{d}}

\makeatletter

\makeatother

\usepackage{babel}
  \providecommand{\definitionname}{Definition}
  \providecommand{\lemmaname}{Lemma}
  \providecommand{\propositionname}{Proposition}
\providecommand{\theoremname}{Theorem}
\providecommand{\corollaryname}{Corollary}
\providecommand{\remarkname}{Remark}
\providecommand{\examplename}{Example}


\newcommand{\bbC}{\mathbb{C}}
\newcommand{\bbD}{\mathbb{D}}
\newcommand{\bbE}{\mathbb{E}}

\newcommand{\bbL}{\mathbb{L}}

\newcommand{\bbN}{\mathbb{N}}

\newcommand{\bbR}{\mathbb{R}}

\newcommand{\bbT}{\mathbb{T}}

\newcommand{\bbZ}{\mathbb{Z}}


\newcommand{\cB}{\mathcal{B}}

\newcommand{\cE}{\mathcal{E}}

\newcommand{\cH}{\mathcal{H}}

\newcommand{\cK}{\mathcal{K}}
\newcommand{\cL}{\mathcal{L}}

\newcommand{\cR}{\mathcal{R}}

\newcommand{\cV}{\mathcal{V}}

\newcommand{\cX}{\mathcal{X}}


\newcommand{\ff}{\mathfrak{f}}

\newcommand{\fh}{\mathfrak{h}}

\newcommand{\fk}{\mathfrak{k}}

\newcommand{\fs}{\mathfrak{s}}

\newcommand{\fu}{\mathfrak{u}}


\newcommand{\fI}{\mathfrak{I}}

\newcommand{\fL}{\mathfrak{L}}
\newcommand{\fM}{\mathfrak{M}}

\newcommand{\fU}{\mathfrak{U}}

\newcommand{\supp}{\operatorname{supp}}

\newcommand{\und}[1]{\underline{#1}}
\newcommand{\uund}[1]{\underline{\underline{#1}}}
\newcommand{\wh}[1]{\widehat{#1}}
\newcommand{\wt}[1]{\widetilde{#1}}

\newcommand{\Ind}{\operatorname{Ind}}
\global\long\def\go{G^{(0)}}

\global\long\def\R{\mathbb{R}}

\global\long\def\om{\omega}
\global\long\def\Ginf{G_{\infty}}
\global\long\def\Xinf{X_{\infty}}


\newcommand{\tS}{\tilde{S}}
\newcommand{\tU}{\tilde{U}}
\newcommand{\tE}{\tilde{E}}
\newcommand{\mf}{\bm{m}} 
\newcommand{\Iso}{{\operatorname{Iso}}}

\newcommand{\projE}{\bm{E}}

\setcounter{secnumdepth}{5}
\allowdisplaybreaks

\newcounter{parno}[paragraph]
\renewcommand{\theparno}{\thesection.\arabic{parno}}\newcommand{\p}{\refstepcounter{parno}\noindent{\textbf{\theparno}}\ }

\newcounter{subparno}[parno]
\renewcommand{\thesubparno}{\thesection.\arabic{parno}.\arabic{subparno}}\newcommand{\subp}{\refstepcounter{subparno}\noindent{\textbf{\thesubparno}}\ }

\numberwithin{equation}{section}
\numberwithin{figure}{section}
\theoremstyle{plain}
\newtheorem{thm}{\protect\theoremname}[parno]
  \theoremstyle{plain}
  \newtheorem{prop}[thm]{\protect\propositionname}
  \theoremstyle{plain}
  \newtheorem{lem}[thm]{\protect\lemmaname}
  \theoremstyle{definition}
  \newtheorem{defn}[thm]{\protect\definitionname}
  \newtheorem{cor}[thm]{\protect\corollaryname}
    \newtheorem{example}[thm]{\protect\examplename}
  \theoremstyle{plain}
  \newtheorem{remark}[thm]{\protect\remarkname}
  \theoremstyle{plain}
  
  \theoremstyle{plain}
\newtheorem*{lem*}{Lemma}
\newtheorem*{def*}{Definition}
\newtheorem*{prop*}{Proposition}
\newtheorem*{thm*}{Theorem}
\newtheorem*{cor*}{Corollary}

\dedicatory{To the memory of Iain Raeburn who was a dear friend, a wonderful collaborator, and an inspiring mathematician to all who knew him.}

\begin{document}

\title{A panoramic view of groupoids  and MRAs}

\author{Marius Ionescu}
\address{ Marius Ionescu\\ United States Naval Academy\\ Annapolis\\ MD 21402-5002 \\ USA}
\email{ionescu@usna.edu}
\thanks{The views expressed in this paper are those of the
authors and do not reflect the official policy or position of the U.S. Naval
Academy, Department of the Navy, the Department of Defense, or the U.S.
Government.}
\author{Paul S. Muhly}
\address{ Paul S. Muhly\\ University of Iowa\\ Iowa City\\ Iowa 52242 \\ USA}
\email{paul-muhly@uiowa.edu}

\begin{abstract}
This sequel to \cite{im2008} has three interconnected parts.  
In Part I, Holkar's theory of topological correspondences \cite{hol:JOT17_2} is applied to reveal a unitary equivalence between groupoid representations studied by Renault in \cite{ren:SIGMA14} and induced representations developed by Rieffel in \cite{rie:aim74}. 
In Part II, Part I is applied to the Deaconu-Renault groupoid, $G(X,\sigma)$, determined by a generic surjective local homeomorphism, $\sigma$, on a compact Hausdorff space $X$. 
Each such groupoid admits a pullback extension, $G_{\infty}(X,\sigma)$, that contains a nested family of projections $\{\bbE_j\}_{-\infty < j< \infty}$ and a unitary operator $U$ in the multiplier algebra of its $C^*$-algebra such that $U\bbE_jU^* = \bbE_{j-1}$ for all $j$. 
The pair they determine is called a \emph{proto-multiresolution analysis} (proto-MRA). 
Part I helps to secure the Hilbert space representation theory of proto-MRAs in terms of  representations $\pi$ of $C^*(G(X,\sigma))$ yielding systems $(\{\bbE_j\otimes I_{\pi}\}_{j= -\infty}^{\infty}, U\otimes I_{\pi})$  in such a way that reveals a criterion for deciding when a representation has the property that $\bigwedge (\bbE_i\otimes I_{\pi}) = 0$. When this equation is satisfied, then the family of ranges of the $\bbE_i\otimes I_{\pi}$  is called  a \emph{generalized multiresolution analysis}.
Part III applies the results from the first two parts to this criterion in order to determine  $\pi$'s that give rise to generalized multiresolution analyses. 
These $\pi$'s are parametrized by certain transfer operators for the endomorphism of $C(X)$ determined by $\sigma$.  
Finally, also in Part III, Mallat's famous theorems \cite[Theorem 1 and 2]{Mall_TAMS89} about scaling functions and quadrature mirror filters 
are given new proofs.
In this context, a scaling function is treated as a map from $X$ to $\bbR$ that implements a Hilbert space embedding of the Hilbert space for the generalized MRA into $L^2(\bbR)$.
The image of the generalized MRA under this embedding is called, simply, a \emph{multiresolution analysis} of $G(X,\sigma)$ in $L^2(\bbR)$.
The secret sauce in the construction of scaling functions is a method for ``punching holes'' in the potentials generating the transfer operators for $\sigma$.
Examples are given in the context of Blaschke composition operators and related fractal structures. 

This work was inspired in large part by \cite{Bagg_co_JFAA09,Bag_co_JFA10,LarRae_CM06,Larsen-Raeburn2007}, written by Iain Raeburn and co-authors.

\end{abstract}
\maketitle

\section{\label{sec:Intro,Backgrnd}Introduction}

\p In \cite{im2008}, we began to investigate how groupoids might be used to study wavelets. 
We were inspired by the interesting mix of harmonic analysis with the ``Cuntz Relations'' that had appeared recently in the literature (see Bratteli and Jorgensen's important book \cite{brajor_97} and the references cited therein), but we were also discomfited by a lack of evident unifying structure that behaved with some semblance of functoriality. 
The relations among the parameters involved were unclear to us and needed clarification.  
The pioneering paper of Mallat \cite{Mall_TAMS89} offered (and continues to offer) inspiration on how to develop a functorial setting for wavelets and fractal analysis. 
So, we shall begin there.

\p \label{def:MRA} To understand how Mallat's insights inspired us, recall the definition of a multiresolution analysis (MRA) that forms the foundation of the subject\footnote{In \cite{Mall_TAMS89}, Mallat used the term ``multiresolution approximation (of $L^2(\R)$)''. Subsequently ``MRA'' was adopted by the community of waveleteers. }.

\begin{defn}\label{defn:MRA} An MRA is a sequence $\{V_j\}_{j\in \bbZ}$ of closed subspaces of $L^2(\bbR)$ such that:
\begin{enumerate}[label=(MRA\arabic*),ref=MRA\arabic*, leftmargin=1.8cm]
    \item \label{eq:mranest} $V_j\subset V_{j+1}, j\in \bbZ$;
    \item \label{eq:mrafull} $\bigcup_{j= -\infty}^{\infty}V_j$ is dense in $L^2(\bbR)$; 
    \item \label{eq:mrapure} $\bigcap_{j= -\infty}^{\infty}V_j=\{0\}$;
    \item \label{eq:mrascale} $f(x)\in V_j$ iff $f(2x)\in V_{j+1},$ for all $j\in \bbZ$;
    \item \label{eq:mrascaleinv} if $f(x)\in V_j$, then $f(x-2^{-j}k)\in V_j$ for all $k\in \bbZ$; and
    \item \label{eq:mraunitequiv} translation by $\bbZ$ on $\bbR$ restricted to $V_0$ is unitarily equivalent to translation by $\bbZ$ on $\ell^2(\bbZ)$.
\end{enumerate}
\end{defn}

\subp\label{thm:distilledthm} To the uninitiated, it may be surprising that MRAs exist.  In fact, they exist in superabundance. The following theorem, which is distilled from Theorems 1 and 2 of \cite{Mall_TAMS89} begins to exhibit what lies at the heart of the proof of the existence of an MRA.

\begin{thm*}Each MRA may be constructed from a function\footnote{There are some smoothness conditions required on $H$ that we ignore now.}, $H$, on $\bbT$  that satisfies
    \begin{enumerate}
        \item $\vert H(1)\vert =1$
        \item $\vert H(z)\vert^2 + \vert H(-z)\vert^2 =1, \forall z.$
        \item $H(z)\neq 0$, when $\Re(z)>0$.
    \end{enumerate}
    The correspondence between such functions and MRAs is essentially bijective.
\end{thm*}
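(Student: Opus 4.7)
\medskip

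\noindent\textbf{Proof proposal.} The plan is to handle the two directions of the correspondence separately, in each case passing through the \emph{scaling function} $\phi\in V_0$.

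\smallskip

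For the forward direction (MRA $\Rightarrow H$), I would first exploit \ref{eq:mraunitequiv} together with \ref{eq:mrascaleinv}: the unitary equivalence identifies translation by $\bbZ$ on $V_0$ with translation by $\bbZ$ on $\ell^2(\bbZ)$, so there is a $\phi\in V_0$ whose integer translates form an orthonormal basis. Equivalently, in Fourier variables, $\sum_{k\in\bbZ}|\wh{\phi}(\xi+2\pi k)|^2 = 1/(2\pi)$. Since \ref{eq:mrascale} gives $V_0\subset V_1$ and $\{\sqrt{2}\,\phi(2x-k)\}_k$ is an orthonormal basis of $V_1$, one obtains the two-scale equation $\phi(x)=\sqrt{2}\sum_k h_k\phi(2x-k)$; setting $H(z)=\tfrac{1}{\sqrt 2}\sum_k h_k z^k$ gives the desired function on $\bbT$, and taking Fourier transforms yields
\begin{equation}
\wh{\phi}(\xi)=H(e^{-i\xi/2})\,\wh{\phi}(\xi/2).
\end{equation}
The quadrature mirror filter identity $|H(z)|^2+|H(-z)|^2=1$ then drops out of the orthonormality of the $\phi(\cdot-k)$ by a standard periodization; the normalization $|H(1)|=1$ comes from density \ref{eq:mrafull}, which forces $|\wh{\phi}(0)|=(2\pi)^{-1/2}$ after a Plancherel argument comparing the orthogonal projection onto $V_j$ with the identity as $j\to\infty$; finally the non-vanishing of $H$ on the right half of $\bbT$ is needed so that iterating the scaling identity and taking the infinite product below does not kill $\wh{\phi}$ on a neighborhood of $0$.

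\smallskip

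For the backward direction ($H\Rightarrow$ MRA), the idea is to reverse-engineer $\wh{\phi}$ from the infinite product
\begin{equation}
\wh{\phi}(\xi)=\frac{1}{\sqrt{2\pi}}\prod_{j=1}^{\infty} H(e^{-i\xi/2^j}).
\end{equation}
Convergence uniformly on compacta follows from $H(1)=1$ and smoothness of $H$ (so $H(e^{-i\xi/2^j})=1+O(2^{-j})$). One defines $V_0 = \overline{\operatorname{span}}\{\phi(\cdot-k):k\in\bbZ\}$ and $V_j$ by dilation via \ref{eq:mrascale}. Conditions \ref{eq:mrascale} and \ref{eq:mrascaleinv} are built in; \ref{eq:mraunitequiv} amounts to checking that the integer translates of $\phi$ are orthonormal, which I would deduce by showing $P(\xi):=\sum_{k\in\bbZ}|\wh{\phi}(\xi+2\pi k)|^2$ is a fixed point of the Ruelle/transfer operator associated with the QMF $H$, and then using $|H(1)|=1$ and non-vanishing of $H$ on the right half-circle to conclude $P\equiv(2\pi)^{-1}$. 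Nestedness \ref{eq:mranest} is immediate from the two-scale identity satisfied by $\wh{\phi}$.

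\smallskip

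The main obstacle, and where I would spend most of the effort, is the pair \ref{eq:mrafull} and \ref{eq:mrapure}. Density is the easier one: one shows $\wh{\phi}(0)\neq 0$ (using non-vanishing of $H$ near $1$) and then appeals to the standard fact that nested shift-invariant spaces with a scaling function whose Fourier transform is non-zero at the origin exhaust $L^2(\bbR)$. The intersection property is more delicate and is precisely where the hypothesis $H(z)\neq 0$ for $\Re(z)>0$ is essential: without it the infinite product can vanish on a set of positive measure and the $V_j$ can fail to intersect trivially (the Cohen/Mallat condition). I would handle this by a cylinder-set/coding argument on $\bbT$, showing that the orbit of any $\xi\neq 0$ under $\xi\mapsto \xi/2$ eventually lands in the region $\Re(z)>0$, so that at least one factor in the product is bounded away from zero in a controlled way; this gives the lower bound on $|\wh{\phi}|$ near $0$ needed to prove $\bigcap_j V_j=\{0\}$. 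Essential bijectivity of the correspondence is then simply the observation that the two constructions are inverse to each other on the level of $\wh{\phi}$, up to multiplying $\phi$ by a unimodular $\bbZ$-periodic factor (the residual ambiguity that the word ``essentially'' absorbs).
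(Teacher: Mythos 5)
Your proposal is the classical Fourier--analytic proof (Mallat's original argument as completed by Cohen's condition), and as an outline it is essentially sound; but it is a genuinely different route from the one this paper takes. The paper only \emph{sketches} the Fourier picture in \ref{subp:proofoutline}; its actual proof is distributed over Part III and never works with the periodization $P(\xi)=\sum_k|\wh{\phi}(\xi+2\pi k)|^2$. Instead, the scaling function is built as an infinite product $\prod_n\fu(x_n)$ along a measurable section of the projective limit $X_\infty$, and its convergence is recast measure-theoretically: the product converges exactly when the section is positively supported, i.e.\ an atom of the $p$-system $\nu^D$ (Proposition \ref{prop:scalingfunction}, Lemma \ref{lem:pos_sup}). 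Your fixed-point argument for the Ruelle operator is replaced by the invariance of the Hausdorff measure under the transpose of $\cL_\psi$ (Theorem \ref{thm:Mallat}); the identification of the abstract resolution spaces with subspaces of $L^2(\bbR)$ is an inductive limit of isometries $R_n$ intertwined by $\tS_\fu$ (Theorem \ref{thm:Mallat2}); and $\bigcap_jV_j=\{0\}$ is obtained from purity of the isometry $\tS_\fu$ via unique ergodicity and the reverse martingale theorem (Theorem \ref{thm:tS_fu_pure}). What your route buys is brevity and familiarity; what the paper's buys is that each hypothesis on $H$ is matched to a named structural parameter (potential, filter, adjoining function), so the whole argument generalizes verbatim to non-constant scale --- $N$-fold covers and Blaschke products --- where the dilation $\xi\mapsto\xi/2$ and your cylinder-set coding are no longer available.

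One local correction: you have the roles of density and trivial intersection reversed. For a ladder $\{V_j\}$ of dilates of a principal shift-invariant space generated by a single $\phi\in L^2(\bbR)$, the equality $\bigcap_jV_j=\{0\}$ is automatic and needs no hypothesis on $H$; it is the \emph{density} of $\bigcup_jV_j$ that requires the lower bound on $|\wh{\phi}|$ near the origin, and it is the \emph{orthonormality} of the translates (your claim that $P$ is the constant fixed point) that genuinely uses $H(z)\neq0$ for $\Re(z)>0$. Your orbit argument is the right tool, but it proves density and orthonormality, not the intersection property; as written, the sentence ``this gives the lower bound \dots needed to prove $\bigcap_jV_j=\{0\}$'' does not establish what it claims, although the claim itself is true for elementary reasons.
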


Each such $H$ is called a \emph{Quadrature Mirror Filter (QMF)} or a \emph{mother wavelet}. Of course, it may seem that we have just shifted one difficult problem to another. In a sense, we have.  However, the theorem gives an important change of focus. 

\subp\label{subp:proofoutline} A brief outline of the proof reveals the relation between MRAs and QMFs: Given $H$, the product $\Pi_{k=1}^{\infty}H(e^{2\pi i(2^{-k}\om)})$, $\om \in \bbR$, converges a.e. to a function, $\wh{\phi}$, in $L^2(\bbR)$ whose Fourier transform, $\phi$, lies in the space $V_0$ of an MRA and the sequence $\{\sqrt{2^j}\phi(2^jx-k)\}_{k\in \bbZ}$ is an orthonormal basis for $V_j$ for each $j$. $\phi$ is called the \emph{scaling function} for the MRA. Conversely, given an MRA, item \eqref{eq:mraunitequiv} guarantees the existence of scaling function, whose Fourier transform, $\wh{\phi}$, satisfies
    \begin{equation}\label{eq:QMF}
        \wh{\phi}(2\om) = M(\om)\wh{\phi}(\om),\qquad \om \in \bbR,
    \end{equation}
    where $M$ is a $2\pi$-periodic function on $\bbR$. This $M$ passes to a QMF $H$ on $\bbT$.

Mallat's path between the $H$'s and wavelets is serpentine. The structure of an $H$ is revealed through its interaction with the irreversible dynamical system on $\bbT$\[
    z\to z^2\qquad z\in \bbT.
    \] 
In many, if not all, the myriad variations on, or generalizations of, Mallat's analysis, there are local homeomorphisms in the background that play fundamental roles similar to that of $z\to z^2$. We are thus led to study these, which we do through the lens of groupoid harmonic analysis that is focused in particular on the \emph{Deaconu-Renault groupoid} of a local homeomorphism.

\p \label{par:Outline}
Our work involves two areas of mathematics that are not ordinarily seen as overlapping. 
It is the possibility of a fruitful overlap that intrigues us and has driven this investigation.
So, to make our presentation understandable to the broadest possible audience, we have tried to provide a soupcon of background to each step we take. 
In consequence, to clarify the significance of these steps the paper is divided into 12 sections, besides this one, and these, in turn, may be lumped into parts.
Part I consists of Sections \ref{sect:GpdsWHSysts} - \ref{sec:induc-repr}, in which we develop some general background about groupoid harmonic analysis. 
The key feature about groupoids that we want to emphasize is that they often provide useful coordinates for $C^*$-algebras of interest.
Further, algebraic operations on groupoid $C^*$-algebras are often mirrored directly in the groupoids.
In particular, Rieffel's theories of induced representations and Morita equivalence are reflected in the notion of equivalence of groupoids first developed in \cite{mrw:jot87}. 
Further, still, what proves especially important for the purposes of this paper and is the vessel for our ``secret sauce'', is the recent contribution of Rohit Dilip Holkar \cite{hol:JOT17_2} in which he advances the notion of a \emph{topological correspondence} and the attendant concept of an \emph{adjoining function}. 
The reader should bear in mind that $C^*$-algebras form a category in which a morphism from a $C^*$-algebra $A$ to a $C^*$-algebra $B$ is a certain type of $A-B$-bimodule called a \emph{$C^*$-correspondence}. This is a \emph{directed} construct: If $\cX$ is such a bimodule, one says that $\cX$ is a $C^*$-correspondence \emph{from} $A$ \emph{to} $B$. 
In a sense that we will make clear, topological correspondences are the morphisms in the category whose objects are groupoids with Haar systems. 
The equivalences from \cite{mrw:jot87} are the isomorphisms. 

Part II, which covers Sections \ref{sect:DR-gpd} -- \ref{sec:PMRAs}, begins in Section \ref{sect:DR-gpd} with some review materials from \cite{im2008} which was devoted entirely to the $C^*$-algebra of the Deaconu-Renault groupoid $G(X,\sigma)$ of a generic local homeomorphism $\sigma$ of a compact Hausdorff space $X$.
Section \ref{sec:PBsIRs} is devoted to building a new groupoid from $G(X,\sigma)$, denoted $G_{\infty}(X,\sigma)$, which is the ``pullback'' of $G(X,\sigma)$ by the limit, $X_{\infty}$, of the canonical projective system built from iterates of $\sigma$.

The groupoid $G_{\infty}(X,\sigma)$ is equivalent to $G(X,\sigma)$ in the sense of \cite{mrw:jot87} but, by virtue of being a pullback, the $C^*$-algebra of $G_{\infty}(X,\sigma)$  carries a copy of $C^*(G(X,\sigma))$ inside it and, as we show in Section \ref{sec:CondExps}, there is a conditional expectation from $C^*(G_{\infty}(X,\sigma))$ onto the copy of $C^*(G(X,\sigma))$.
We use this conditional expectation coupled with  Holkar's analysis to build a  $C^*$-correspondence $\cX$ from $C^*(G_{\infty}(X,\sigma))$ to $C^*(G(X,\sigma))$ that may \emph{not} be invertible in the collection of $C^*$-correspondences from $C^*(G_{\infty}(X,\sigma))$ to $C^*(G(X,\sigma))$. Further, in the space of bounded adjointable operators on $\cX$, $\cL(\cX)$, we build a sequence of projections $\{\bbE_j\}_{j= -\infty}^{\infty}$ and a unitary operator $U$ with the following properties:
\begin{enumerate}[label=(PMRA\arabic*), ref=PMRA\arabic*, leftmargin=2cm]\label{def:protoMRA}
    \item\label{eq:pmra} $\bbE_j\leq \bbE_i$ when $j\leq i$,
    \item \label{eq:pmrafull} $I_{\cX}=\bigvee_{-\infty < j < \infty} \bbE_j$,
    \item \label{eq:pmrascal} $U\bbE_jU^*= \bbE_{j-1}$ for all $j$, and
    \item \label{eq:pmraunitequiv} the copy of $C^*(G(X,\sigma))$ in $C^*(G_{\infty}(X,\sigma))$ leaves the range of $\bbE_j\ominus \bbE_{j-1}$ invariant and $U^{j-i}$ implements a unitary equivalence between the restriction of the copy of $C^*(G(X,\sigma))$ to the range of $\bbE_j\ominus \bbE_{j-1}$ and the restriction of the copy to the range of $\bbE_i\ominus \bbE_{i-1}$.
\end{enumerate}
We call the pair $(\{\bbE_j\}_{j= -\infty}^{\infty}, U)$ a \emph{proto-multiresolution analysis} because of the evident relations with Definition \ref{defn:MRA}.
Equation \eqref{eq:pmra} is a clear variant of equation \eqref{eq:mranest}, while \eqref{eq:pmrafull} corresponds to \eqref{eq:mrafull}. 
Equations \eqref{eq:mrascale}, \eqref{eq:mrascaleinv} and \eqref{eq:mraunitequiv} are captured in part by \eqref{eq:pmrascal} and  \eqref{eq:pmraunitequiv}.

What is conspicuously absent from the defining properties of a proto-multiresolution analysis is anything that relates to \eqref{eq:mrapure}.
Now, we do not pursue all the possibilities here.  
They are not necessary for our immediate purposes.
Rather, we take our next step and turn our attention to what happens when we choose a $C^*$-representation $\pi$ of $C^*(G(X,\sigma))$ acting on a Hilbert space $\cH_{\pi}$. We form the induced representation $
\Ind_{\pi}$ of $C^*(G_{\infty}(X,\sigma))$, which acts on $\cX\otimes_{\pi}H_{\pi}$, and form the induced system of operators, $\{\bbE_j\otimes I\}_{j= -\infty}^{\infty}$ and $U\otimes I$, which live in 
$B(\cX\otimes H_{\pi})$. Then, when we form the spaces $V_j:=\bbE_j\otimes I(\cX\otimes H_{\pi})$, $-\infty < j < \infty$, we obtain a family of \emph{Hilbert spaces} that satisfy the evident analogues of 
\eqref{eq:pmra}-\eqref{eq:pmraunitequiv}.   But also, as we show in Theorem \ref{thm:GMRA}, we can determine, in terms of $\pi$, when $\cap_j V_j=\{0\}$. When this intersection is $\{0\}$, then we say that
$\{V_j\}_{-\infty < j < \infty}$ is a \emph{generalized multiresolution analysis}.

In Part III, which comprises sections \ref{sec:pot,filt,scalfns}-\ref{sec:arbitraryscale2}, we analyze when representations $\pi$ of $C^*(G(X,\sigma))$ give rise to generalized multiresolution analyses.
Our analysis culminates in Theorem \ref{thm:tS_fu_pure} which shows that $\pi$ gives rise to a generalized MRA if and only if the endomorphism of $C(X)$ determined by $\sigma$ admits a so-called transfer operator with properties that are fairly easy to determine.
We apply Theorem \ref{thm:tS_fu_pure} to $\sigma$s which are local homeomorphisms of the circle $\bbT$ determined by finite Blaschke products in Section \ref{sec:arbitraryscale2}.
In Theorem \ref{thm:Mallat} we give our proof of Mallat's theorem from subparagraph \ref{thm:distilledthm}.
Our proof reveals the role of a scaling function: It implements a Hilbert space isomorphism between $\cX\otimes H_{\pi}$ and $L^2(\bbR)$ that maps the $\{V_j\}_{-\infty < j <\infty}$ to a complete nest in $L^2(\bbR)$ that faithfully reflects the features of $\Ind_{\pi}$. Thus the scaling function is the link between the dynamical system $(X,\sigma)$
 and $L^2(\bbR)$. So, in the event there is a scaling function, we refer to all the structure it reveals simply as a \emph{multiresolution analysis} in $L^2(\bbR)$.

Now scaling functions are constructed by carefully punching holes in the potentials that are used to generate transfer operators for $\sigma$. (See Lemma \ref{lem:pos_sup}.)  These potentials, in turn, are built from Holkar's adjoining functions, which are the key parameters needed to construct the topological correspondence from $G_{\infty}(X,\sigma)$ to $G(X,\sigma)$. 

We called Mallat's path between QMFs and wavelets ``serpentine''. The reader may feel that our approach is much more so. However, it has three advantages. First, and foremost, it puts into evidence precisely the roles that the various parameters play. Second, it provides new proofs of other results in the literature. We have in mind, in particular, \cite[Theorem 3.1]{brajor_97},\cite[Theorem 8]{Bagg_co_JFAA09}, and \cite[Theorem 3.1]{Bag_co_JFA10}. Third, it opens up a whole new world of examples that deserves to be explored.

\section*{{\Large{Part I}}\\Selected Generalities about Groupoids}

\section{Groupoids and Haar Systems}\label{sect:GpdsWHSysts}

\p
We follow the books by Jean Renault \cite{Ren_LNM793} and Dana Williams \cite{WilliamsGpdBook} for basic facts about groupoids and notation. A generic second countable, locally compact, Hausdorff groupoid will be denoted by $G$. Its unit space will be denoted $\go$ and $r$ and $s$ will denote the \emph{range} and \emph{source} maps, mapping $G$ to $\go$. It is part of the definition of ``locally compact groupoid'' that these maps are continuous and open.  Particular groupoids that we encounter and study will be denoted by $G$'s with additional labels. This convention will  reinforce the functorial relations among the constructs considered.

\subp \label{subp:warning} A key feature of groupoid theory is the superabundance of fibred products: Suppose $X$, $Y$, and $Z$ are three sets and $f:X\to Z$ and $g:Y\to Z$ are two maps then the \emph{fibred product} of $X$ and $Y$ determined by $f$ and $g$ is
\[
X {_f \ast_g} Y:=X\ast Y:= \{(x,y)\in X\times Y \mid f(x)=g(y)\}.
\]
This feature of the theory leads to multiple notation for the same object. 
The choice of notation is dictated by properties of the objects under consideration and the emphases necessary in the moment.

\subp 
It will be helpful also to emphasize at the outset some notions that are developed in \cite{anaren:amenable00} by Claire Annantharaman-Delaroche and Renault.
\begin{defn}\label{def:systsmaps}\cite[Definition 1.1.1]{anaren:amenable00}
Let $Y$ and $X$ be two locally compact spaces and let $\pi:Y\to X$ be a continuous map. A \emph{continuous $\pi$-system} (or a \emph{continuous system of measures on $Y$ adapted to $\pi$}) is a family $\{\alpha^x\mid x\in X\}$ of positive Radon measures on $Y$ such that
\begin{enumerate}
\item the support of each $\alpha^x$, $\supp{\alpha^x}$, is contained in $\pi^{-1}(x)$, and
\item the map $\alpha$ defined on $C_c(Y)$ by the formula $\alpha(f)(x)= \int f\, d\alpha^x$ carries $C_c(Y)$ into $C_c(X)$.  
\end{enumerate}
We write $\alpha = \{\alpha^x\}_{x\in X}$ and call $\alpha$ \emph{proper} provided $\alpha^x\neq 0$ for all $x\in X$, while we call $\alpha$  \emph{full} in case  the support of each $\alpha^x$ is all of $\pi^{-1}(x)$.
\end{defn}

\begin{remark}\label{rem:RevAltWay} A useful and revealing alternate way to view a continuous $\pi$-system is this: The map $\pi:Y\to X$ makes $C_c(Y)$ a right module over $C_c(X)$ via the formula
\begin{equation}
    \phi\cdot\psi(y):= \phi(y)\psi\circ \pi(y), \qquad \phi\in C_c(Y),\psi\in C_c(X), y\in Y.
\end{equation}
Also, one may view $C(X)$ as a right module over itself induced by the identity map on $X$.
So, from the perspective of these modules, a continuous $\pi$-system, $\alpha$, defines a $C_c(X)$-module map from $C_c(Y)_{C_c(X)}$ to $C_c(X)_{C_c(X)}$ that maps nonnegative functions on $Y$ to nonnegative functions on $X$. The Riesz representation theorem implies, conversely, that every such module map defines a continuous $\pi$-system. Further, it is a consequence of what we like to call the Williams-Blanchard theorem \cite[Corollary B.18]{WilliamsGpdBook} that \emph{full} continuous $\pi$-systems exist if and only if $\pi$ is an open map.
\end{remark}

\begin{defn}\label{def:gpdAction}  
A groupoid $G$ \emph{acts} on a set $X$ (on the \emph{left}) in case there is a map $\rho: X\to \go$, called the \emph{moment map} or \emph{anchor map}, and a composition map \verb|\cdot|,  $\cdot$, from the fibred product $G\,{_s\ast_{\rho}}\,X$ to $X$ satisfying
    \begin{enumerate}
        \item $\rho(x)\cdot x=x$, for all $x\in X$, 
        \item $\rho (\gamma \cdot x)= r(\gamma)$, for all $(\gamma,x)\in G\,{_s\ast_{\rho}}\,X$ and
        \item $(\alpha\beta)\cdot x= \alpha\cdot(\beta\cdot x)$ for all $\alpha$, $\beta$, and $x$ such that $s(\alpha)=r(\beta)$, $s(\beta)=\rho(x)$.
        \end{enumerate}
        If $G$ is a topological groupoid and $X$ is a topological space, then a $G$ action on $X$ is continuous if all the maps involved are continuous and, in addition, the moment map is open\footnote{In \cite[Remark 2.2]{WilliamsGpdBook} Dana Williams comments on the assumption that the moment map is open, noting in particular that it is not viewed as necessary in many points in the literature. We include the assumption in our definition because we envision frequently occurring  circumstances when it becomes necessary for our analyses.}.
\end{defn}

Right actions of groupoids are defined similarly.  One needs a moment map $\sigma:X \to \go$\footnote{This use of $\sigma$ should not be confused with the local homeomorphism $\sigma$ that is the primary object of study in this paper (see Section \ref{sect:DR-gpd} et.seq.) It's use here is to signal an analogue of the source map in a groupoid and will not show up when we are discussing the groupoids associated to our local homeomorphism.} and a composition map, $\cdot$\,, that maps the fibred product $X\,{_{\sigma}\ast_{r}}\,G$ to $X$ so that the analogues of the equations in Definition \ref{def:gpdAction} hold. For example, $x \cdot (\alpha\beta)=(x\cdot \alpha)\cdot \beta$ for all $\alpha$, $\beta$, and $x$ such that $\sigma(x)= r(\alpha)$, and $\alpha$ and $\beta$ are composable in $G$.

\begin{defn}\label{def:invAct}
Let $G$ act continuously on $X$ and on $Y$ and let $\pi:Y\to X$ be an invariant continuous map, i.e., $\pi$ intertwines the two actions - $\pi(\gamma \cdot x)=\gamma \cdot \pi(x)$, then a continuous $\pi$-system of measures $\alpha = \{\alpha^x\}_{x\in X}$ is \emph{invariant} for the actions in case the map $\alpha$ viewed as a map from $C_c(Y)$ to $C_c(X)$ is invariant, i.e., 
\begin{equation}
    \int f(\gamma\cdot x)\,d\alpha^{s(\gamma)}(x) = \int f(x)\,d\alpha^{r(\gamma)}(x).
\end{equation}
\end{defn}

One sometimes uses ``equivariant'' for what we are calling ``invariant'' when discussing groupoid actions and $\pi$-systems.

\subp If $X=Y=G$, while $\pi$ is the identity map, and if $\rho$ is the range map, $r$, then a \emph{full} invariant system of measures is a \emph{Haar} system on $G$.  
Typically, we will denote Haar systems by $\lambda$, and write also $\lambda = \{\lambda^u\}_{u\in G^0}$. 
Together with a Haar system $\lambda$, the space $C_c(G)$, consisting of continuous complex-valued functions on $G$ endowed with the inductive limit topology, becomes a locally convex topological $*$-algebra, called the \emph{convolution algebra} of $G$ and $\lambda$ and denoted $C_c(G,\lambda)$. 
The product is defined by the formula
\begin{equation}
    f*g(\gamma):=\int_G f(\alpha)g(\alpha^{-1}\gamma)\, d\lambda^{r(\gamma)}(\alpha),\qquad f,g\in C_c(G,\lambda),
\end{equation} and the involution is defined by 
\begin{equation}
f^*(\gamma):=\overline{f(\gamma^{-1})},\qquad f\in C_c(G,\lambda).
\end{equation}

The $C^*$-algebra of $G$ and a Haar system $\lambda$ is   the enveloping $C^*$-algebra of $C_c(G,\lambda)$ and will be denoted $C^*(G,\lambda)$ (See \cite[Section 2.1]{Ren_LNM793} and \cite[Section 1.4]{WilliamsGpdBook}). 
If the Haar system $\lambda$ is fixed, we abbreviate the notation for the $C^*$-
algebra to $C^*(G)$.

\p \label{par:HspaceReps} Here we provide a telegraphic account of groupoid representations that involve Hilbert \emph{spaces}.  Later, we shall discuss representations of groupoids on certain Hilbert $C^*$-\emph{modules}. 

\subp \label{subp:HspaceReps} Representations of groupoids take place on Hilbert bundles that are fibred over their unit spaces. 
The representations of the associated convolution algebras take place on the Hilbert spaces obtained by integrating the fibres of the bundles using quasi-invariant measures on the unit spaces \cite[Chapter 3]{WilliamsGpdBook}. 
How to build quasi-invariant measures for our groupoids is the focus of much of Parts II and III  of this paper.

\subp\label{subp:qimeas} If $G$ is a locally compact groupoid with Haar system $\lambda:=\{\lambda^u\}_{u\in \go}$ and if $\mu$ is a (positive) measure on $\go$, then one may form the measure on $G$ that is denoted $\mu \circ \lambda$, and is defined by the formula:
\begin{equation}
    \int_G f(\gamma)\, d\mu \circ \lambda(\gamma):= \int_{\go}\int_G f(\gamma)\, d\lambda^{r(\gamma)}(\gamma)\,d\mu(r(\gamma)),\qquad f\in C_c(G)\footnote{This definition of $\mu \circ \lambda$ may be a bit discomfiting when the right hand side of the defining equation suggests that one should write $\lambda \circ \mu$. However, nowadays measures are viewed as maps acting on function spaces. So, from that perspective, one should write integrals as we did on the left hand side.}.
\end{equation}
Composing $\mu\circ \lambda$ with the inverse map on $G$ creates a new measure, denoted $(\mu\circ \lambda)^{-1}$. The measure $\mu$ is called \emph{quasi invariant} in case $\mu\circ \lambda$ and $(\mu\circ \lambda)^{-1}$ are mutually absolutely continuous, in which case, the function $\Delta_\mu:=\frac{d(\mu\circ \lambda)}{d(\mu \circ \lambda)^{-1}}$ is called the \emph{modular function of $\mu$ \textbf{and} $\lambda$}. The modular function $\Delta_\mu$, while defined only almost everywhere with respect to $\mu\circ \lambda$ can be chosen so that it is a homomorphism from $G$ to the positive real numbers \cite[Proposition 7.6]{WilliamsGpdBook}.
The modular function, $\Delta_\mu$, is also called the \emph{modular cocycle}. It is of interest to know when $\Delta$ is a \emph{coboundary}, meaning that there is a positive function $b$ on $\go$ such that 
$\Delta_\mu(\alpha\beta)
=b(\alpha)b(\beta)^{-1}$
for all $\alpha,\beta\in G$.
In that event, the measure $\tilde{\mu}:=b\cdot \mu$, is invariant, i.e., $\tilde{\mu}\circ \lambda = (\tilde{\mu}\circ \lambda)^{-1}$.

\subp \label{subp:RepsofG} A \emph{(unitary) representation} of a groupoid $G$ is a \emph{triple} $(\mu,\go\ast\cH,\hat{L})$, where $\mu$ is a quasi-invariant measure on $\go$, $\go\ast\cH$ is a Borel Hilbert bundle on $\go$, and $\hat{L}:G\to \Iso(\go\ast \cH)$ is a homomorphism of $G$ into the unitary groupoid, $\Iso(\go\ast \cH)$, of the bundle $\go\ast \cH$. The \emph{triple} $(\mu,\go\ast\cH,\hat{L})$ is usually abbreviated to $\hat{L}$, but doing so sometimes (often?) causes confusion. 

The ingredients that make up a representation require a bit more explanation. 
The Hilbert bundle  $\go\ast\cH$ is a Borel space with a Borel projection $\pi$ onto $\go$ with the property that each fibre of $\pi$, $\pi^{-1}(u)$, is a Hilbert space $H_u$. 
Further, $\go*\cH$ has a structure that allows one to add global (cross) sections to $\pi$ and to multiply them by complex scalars in a fashion that is consistent with the structures on the fibres. Nevertheless, one often writes, informally, $\go\ast \cH=\{H_u\}_{u\in \go}$.
We omit the details and refer the reader to \cite[3.5]{WilliamsGpdBook} for an expatiation.  

\subp\label{subp:intreps}The 
unitary groupoid of $\go\ast \cH$ is $\text{Iso}(\go\ast \cH):=\{(u,U,v)\mid U\in B_{\text{iso}}(H_v,H_u)\}$, where $B_{\text{iso}}(H_v,H_u)$ is the space of all Hilbert space isomorphisms from $H_v$ to $H_u$. It carries a Borel structure induced by that of $\go*\cH$. The product of $(u,U,v)$ and $(w,V,z)$ is defined only when $v=w$ and in that case, the product is $(u,UV,z)$.  Since $G$ carries a  Borel structure that is generated by it's topology, it makes perfectly good sense to insist that $\hat{L}$ be a Borel homomorphism from $G$ to $\text{Iso}(\go\ast \cH)$ cf.\cite[Proposition 3.38]{WilliamsGpdBook}.

Since $\hat{L}$ is a homomorphism from $G$ into $\Iso(\go\ast \cH)$, it must be written to fit the notation for $\Iso(\go\ast \cH)$. So, we write 
\begin{equation}
    \hat{L}(\gamma):=(r(\gamma),L_{\gamma},s(\gamma)),\qquad \gamma\in G, 
\end{equation}
where $L_{\gamma}$ lies in $B_{\text{iso}}(H_{s(\gamma)},H_{r(\gamma)})$.

\subp\label{subp:BundleStruct} Now, direct integral theory allows one to write a given bundle as a direct sum of sub-bundles each isomorphic to a bundle of the form $U\times H_U$ for a suitable subset $U\subset \go$ and Hilbert space $H_U$. 
Further, it can be arranged so that the sets and spaces may be chosen in such a way that $H_U$ and $H_V$ have the same Hilbert space dimension if and only if $U=V$.
In that event, of course, two Hilbert bundles with the same structure (i.e, direct sum decomposition, dimension function, etc.) are isomorphic, \emph{but not naturally so}. Each isomorphism involves choices of measurable cross sections of $\pi$. These exist in our situation\footnote{There are many proofs of this statement, usually called the \emph{principal of measurable choice}. Our favorite is the proof of Theorem 1 of Ed Azoff's brief note \cite{Azoff74}.}, but definitely they are not unique. 

Given a Borel Hilbert bundle $\go\ast \cH$, and (sigma-finite) Borel measure $\mu$, one can form the Hilbert \emph{space} consisting of all its Borel sections $\fh$ such that $\int_{\go}\langle \fh(u),\fh(u)\rangle_u\, d\mu(u)$ is finite. This space will be denoted $L^2(\go\ast \cH,\mu)$ and the inner product of two sections $\fh$ and $\fk$ will be given by the expected integral
\begin{equation}
    \bigl( \fh,\fk\bigr): = \int_{\go}\bigl(\fh(u),\fk(u)\bigr)_{H_u} \, d\mu(u)\footnote{Of course, there is nothing special about $\go$, $\mu$ and $\go*\cH$. The basic features are the same for any Hilbert bundle built on a space $Y$, $Y\ast \cK$, with a sigma finite measure $\nu$.}
.
\end{equation} 

\subp\label{subp:1dimfibres} When the dimensions of the fibres are one, we may choose a non-vanishing cross section $\fk$ of norm one in $L^2(\go\ast \cH,\mu)$. 
Then, if $\fh$ is another cross section, it can be written as $\fh=c\cdot \fk$, where $c$ is a uniquely determined complex-valued Borel function on $\go$. 
The map $\fh\to c$ is a Hilbert space isomorphism from $L^2(\go\ast \cH,\mu)$ onto the space of scalar-valued, square-integrable functions on $\go$, $L^2(\go,\mu)$, that we denote by $V$ and call an \emph{(archetypal) decomposable isomorphism} between $L^2(\go\ast \cH,\mu)$ and $L^2(\go,\mu)$. Evidently, $V$ is decomposable because it intertwines the actions of multiplication by functions in $L^{\infty}(\go,\mu)$ on $L^2(\go\ast \cH,\mu)$ and on $L^2(\go,\mu)$. Further, while it is determined by the choice of $\fk$, any other $\fk'$, that is nonvanishing and has norm $1$, is related to $\fk$ by an equation $\fk'=c\cdot \fk$, where $c:X\to \bbT$ is a Borel function. Consequently, the corresponding isomorphism $V'$ is related to $V$ through multiplication by $c$.  Therefore, for all our intents and purposes, we may replace $L^2(\go\ast\cH,\mu)$ with $L^2(\go,\mu)$ and carry out computations there.

\subp\label{subp:AbstRepTheory}Now, let $(\mu,\go\ast\cH,\hat{L})$ be a unitary representation of $G$ in
the unitary groupoid, $\Iso(\go\ast\cH)$. The 
\emph{integrated form}, $\fL$, of $L$ represents the convolution algebra of $G$, $C_c(G,\lambda)$, on  $L^2(\go*\cH,\mu)$ via the formula
\begin{multline}\label{eq:repgenG}
    \fL(f)\fh(r(\gamma)):=\int_{G}f(\gamma)L_{\gamma}(\fh(s(\gamma))\Delta(\gamma)^{-1/2}\, d\lambda^{r(\gamma)}(\gamma),\\  f\in C_c(G,\lambda), \fh\in L^2(\go\ast \cH,\mu).
\end{multline}
This formula extends to all Borel functions $f$ such that $\fL(f)$ makes sense as a bounded operator on $L^2(\go\ast \cH,\mu)$.  However, it is difficult to identify precisely which $f$ have this property. 

The formula reveals a convention that we shall follow: A representation of $G$ into the unitary groupoid of a bundle will always be written with Roman or Greek letters, the integrated form of the representation acting on sections will be denoted by the corresponding fractur letters. 
The sections, in turn, will also be written with fractur letters, except in the setting when the fibres all have dimension one. In that case, as we discussed in subparagraph \ref{subp:1dimfibres}, $L^2(\go\ast \cH,\mu)$ is decomposably, isometrically isomorphic to $L^2(\go,\mu)$, and so all our computations may be carried out in $L^2(\go,\mu)$. 

\p \label{sub:HilbMod_and_Holkar} 
In this subsection, we want to provide some background about Hilbert $C^*$-modules over groupoid $C^*$-algebras and, most important, we want to use it to reveal the source of our ``secret sauce'' mentioned in the abstract. It is based on some ideas of Holkar that were conceived in his thesis \cite{holk_phd} and developed further in \cite{hol:JOT17_2}.

Suppose $A$ and $B$ are $C^*$-algebras and $\cX$ is (right) Hilbert module over $B$ with a  $B$-valued inner product $\langle \cdot , \cdot \rangle$ in the sense of Lance \cite[Chap. 1]{lan:hilbert}.
We say that $\cX$ is a $C^*$-\emph{correspondence from $A$ to $B$} in case $A$ is represented in the algebra of all bounded adjointable operators on $\cX$, $\cL(\cX)$
\cprotect\footnote{Usually, we shall denote
the left action of $A$ on $\cX$ 
simply using a \verb|\cdot|, but 
there are occasions when it is 
helpful to write $L(a)\xi$ for 
$a\cdot \xi$.}.
In this event, following Rieffel \cite{rie:aim74}, one may induce $C^*$-representations of $A$ from $C^*$-representations of $B$ simply by forming balanced tensor products, as follows: For a $C^*$-representation of $B$, $\pi$, on the Hilbert space $\cH$, $\cX\otimes_{\pi} \cH$ becomes a Hilbert space in the inner product that is defined on decomposable tensors by 
$
\langle \xi \otimes h, \eta \otimes k\rangle: = \langle h,\pi(\langle \xi, \eta\rangle)k\rangle.
$
The representation, $\Ind_{\pi}:A\to B(\cX \otimes_{\pi} \cH)$, then, is given by $\Ind_{\pi}(a)(\xi \otimes h):=(a\cdot \xi)\otimes h,$ $a\in A$, $\xi\otimes h \in \cX\otimes \cH$, i.e., $\Ind_{\pi}(a) = L(a)\otimes I_{\cH}$.

Suppose now that $A=C^*(H,\beta)$ and $B=C^*(G,\lambda)$ for groupoids $H$ and $G$ endowed with Haar systems $\beta$ and $\lambda$. 
Holkar showed how to build a correspondence from $A$ to $B$ using what he calls a topological correspondence from $(H,\beta)$ to $(G,\lambda)$.
We review Holkar's definition and construction since our notation is slightly different from his.

\begin{defn}\label{def:topcorr}
A topological correspondence from $(H,\beta)$ to $(G,\lambda)$ is a pair $(Z,\alpha)$ where:
\begin{enumerate}
        \item $Z$ is a locally compact $H-G$ bispace, meaning that $Z$ is a right $G$-space with moment map $\sigma$ and a left $H$-space with moment map $\rho$ such that $\eta\cdot (z\cdot \gamma)=(\eta\cdot z)\cdot \gamma$, for all $\eta\in H$, $\gamma \in G$ and $z\in Z$ for which  
        the formula is defined.
        \item The action of $G$ on $Z$ is proper in the sense of \cite[Definition 2.15]{WilliamsGpdBook}.
        \item $\alpha=\{\alpha_u\}_{u\in \go}$ is a $G$-invariant, continuous $\sigma$-system of measures on $Z$ in the sense of Definition \ref{def:invAct} that is also proper in the sense of Definition \ref{def:systsmaps}.
        \item There is a continuous positive function $\Delta_a$ defined on $H*Z$ such that
        \[
        \int_{Z}\int_H F(\eta^{-1},z)\,d\beta^{\rho(z)}(\eta)d\alpha_u(z)=\int_Z\int_{H} F(\eta,\eta^{-1}z)\Delta_a(\eta,\eta^{-1}z)\,d\beta^{\rho(z)}(\eta)d\alpha_u(z),
        \]
        for all $u\in \go$ and $F\in C_c(H*Z)$. The function $\Delta_a$ is called the \emph{adjoining function} for $(Z,\alpha)$.
    \end{enumerate}
\end{defn}
We want to emphasize that the system of measures $\alpha$ must be proper but \emph{not} necessarily full. We use the notation $\Delta_a$ for the adjoining function to 
distinguish it from the modular function, $\Delta$ or $\Delta_\mu$, of a quasi-invariant measure $\mu$.

The main theorem of \cite{hol:JOT17_2}, Theorem 2.10, proves that $C_c(Z)$ endowed with the following operations can be completed to a $C^*$-correspondence from 
$C^*(H,\beta)$ to $C^*(G,\lambda)$\footnote{Algebra-valued inner products will be denoted with $\langle$ and $\rangle$, $\bbC$-valued inner products will be denoted with parentheses.}. :
\begin{align}
  (\xi\cdot f)(z) &= \int_G \xi(z\gamma)f(\gamma^{-1})\,d\lambda^{\sigma(z)}(\gamma), \\
  \langle\,\xi\, ,\zeta\,\rangle_{C_c(G)}(\gamma) &= \int_X \overline{\xi(z)}\zeta(z\gamma)\,d\alpha_{r(\gamma)}(z), \\
  (g\cdot \xi)(z) &= \int_{H} g(\gamma)\xi(\gamma^{-1}z)\Delta_a^{1/2}(\gamma,\gamma^{-1}z)\,d\beta^{\rho(z)}(\gamma)
\end{align}
for all $f\in C_c(G,\lambda)$, $g\in C_c(H,\beta)$, and $\xi,\eta\in C_c(Z)$.

\subp\label{subp:Equivalence_as_corresp}
An equivalence $(Z,\alpha)$, where $\alpha$ is a full $\sigma$-system, between $(H,\beta)$ and $(G,\lambda)$ in the sense of \cite[Definition 2.1]{mrw:jot87}
is a particular example of a topological correspondence. The adjoining function $\Delta_a$ is the constant function 1 in this situation (\cite[Example 3.9]{hol:JOT17_2}).
The $C^*$-algebras $C^*(H,\beta)$ and $C^*(G,\lambda)$ are Morita equivalent (\cite{mrw:jot87}) and the $C^*$-correspondence defined above becomes an imprimitivity bimodule. 
We will specialize this setting further in Sections \ref{sect:DR-gpd} and \ref{sec:PMRAs} by setting $H=G$, $Z=G$, and $\alpha_x=\lambda_x$ for all $x\in X$,  where $\{\lambda_{u}\}_{u\in\go}$ is the
\emph{right} Haar system determined by $\lambda$, $\int f(\gamma)d\lambda_{u}(\gamma):=\int f(\gamma^{-1})d\lambda^{u}(\gamma)$. Then $(G,\alpha)$ is an equivalence between $(G,\beta)$ and $(G,\lambda)$, where
$\beta$ can be a different Haar system on $G$. For our purposes, we view $(G,\alpha)$ as a topological correspondence from $(G,\lambda)$ to $(G,\lambda)$ and we write $\cX_0$ for the 
$C^*$-correspondence from $C^*(G,\lambda)$ to $C^*(G,\lambda)$ that $(G,\alpha)$ defines.

\section{Pullbacks, Imprimitivity Groupoids and Haar Systems}{\label{sec:Haar-Systems}


We want to develop some features of imprimitivity groupoids with  Haar systems that will be tailored to our special setting later.  
Throughout this section, $G$ will be a groupoid with  Haar system $\lambda$, $Y$ will be a second countable, locally compact Hausdorff space and $\Phi:Y\to \go$ will be a continuous, open, and surjective map. 
We will explore  Haar systems on two related groupoids, $G^Z$ and $Y\ast G \ast Y$, that are built from $G$, $\lambda$, $\Phi$, and continuous $\Phi$-systems of measures on $Y$.

\p\label{par:GsuperZ} The space $Z$ is defined to be the fibred product,
\begin{equation}
Z:=Y\,{_{\Phi}*_r}\,G:=\{(y,\gamma)\in Y\times G\,:\,\Phi(y)=r(\gamma)\}.\label{eq:Z}
\end{equation}
It carries the evident right $G$ action through right translation on $G$:
\[
(z,\alpha),\beta \to (z,\alpha\beta), \qquad (z,\alpha)\in Z\ast G, \, \beta\in G.
\]
This action of $G$ on $Z$ is free and proper because the action of
$G$ on $G$ via right translation is free and proper. 

To define the imprimitivity groupoid
determined by $G$ and $Z$, $G^Z$, let $\sigma:Z\to G$ be the map, $\sigma(y,\gamma):=s(\gamma)$ and set 
\[
Z*Z:=\{(x,y)\in Z\times Z\mid \sigma(x)=\sigma(y)\}.
\]
Then $G$ acts on the right on $Z*Z$ by the diagonal action
$(x,y)\cdot\gamma=(x\gamma,y\gamma)$. The \emph{imprimitivity groupoid}
determined by $G$ and $Z$ is $G^{Z}:=Z*_{G}Z:=(Z*Z)/G$.
The unit space of $G^{Z}$ can be identified with $Z/G$, which in turn may be identified with $Y$, and the range
and source maps on $G^{Z}$ are given by
\[
r([x,y])=x\cdot G\quad\text{and}\quad s([x,y])=y\cdot G.
\]

The space $Z$ becomes a free and proper left $G^{Z}$-space as follows. The moment map $\rho:Z\to Z/G$ is simply the quotient
map. The space $G^{Z}*Z$, then, is $\{([x,y],z)\in G^{Z}\times Z\mid y\cdot G=z\cdot G\}$.
The left action of $G^{Z}$ on $Z$ is
defined via the formula
\[
[x,y]\cdot z:=x\gamma,
\]
where $\gamma$ is the unique element in $G$ such that $z=y\cdot\gamma$.
The space $Z$ is a free and proper left $G^{Z}$-space and, in fact,
$Z$ is an equivalence between $G^{Z}$ and $G$ (\cite[Section 2]{mrw:jot87}).

\p\label{par:pullback}The next proposition provides an alternate description of the imprimitivity groupoid  $G^{Z}$.
\begin{prop}
\label{lem:Zgisomorphicto} The imprimitivity groupoid $G^{Z}$ is isomorphic to the \emph{pullback groupoid determined by $Y$ and $\Phi$} \footnote{The term ``blow up'' is also used in the operator algebra literature. However, we prefer the term ``pullback'' because it reveals more clearly what is going on. It seems more appropriate for the categorical point of view we are trying to promote. Also, using ``pullback'' instead of ``blow up'' prevents confusion with the older, highly venerated use of ``blowup'' in algebraic geometry.}, 
$Y*G*Y$, where
\begin{multline}
Y*G*Y:= Y{\,{_\Phi \ast_r}\,}G{\,{_s \ast_\Phi}\,}Y: \\= \{(x,\gamma,y)\in Y\times G\times Y\mid\Phi(x)=r(\gamma)\text{ and }\Phi(y)=s(\gamma)\}
\end{multline}
is endowed with the operations
\[
(x,\gamma,y)\cdot(y,\delta,z):=(x,\gamma\delta,z)
\]
and
\[
(x,\gamma,y)^{-1}:=(y,\gamma^{-1},x).
\]
\end{prop}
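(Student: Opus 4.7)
The plan is to write down an explicit isomorphism $\Psi:G^{Z}\to Y*G*Y$ and then verify its properties. On representatives, I define
\[
\Psi\bigl[(y_{1},\gamma_{1}),(y_{2},\gamma_{2})\bigr]:=(y_{1},\gamma_{1}\gamma_{2}^{-1},y_{2}).
\]
This makes sense because $(y_{1},\gamma_{1})*(y_{2},\gamma_{2})\in Z*Z$ forces $s(\gamma_{1})=s(\gamma_{2})$, so $\gamma_{1}\gamma_{2}^{-1}$ is defined with $r(\gamma_{1}\gamma_{2}^{-1})=r(\gamma_{1})=\Phi(y_{1})$ and $s(\gamma_{1}\gamma_{2}^{-1})=r(\gamma_{2})=\Phi(y_{2})$. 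First I would check well-definedness: the diagonal $G$-action sends $((y_{1},\gamma_{1}),(y_{2},\gamma_{2}))$ to $((y_{1},\gamma_{1}\beta),(y_{2},\gamma_{2}\beta))$, and $(\gamma_{1}\beta)(\gamma_{2}\beta)^{-1}=\gamma_{1}\gamma_{2}^{-1}$, so $\Psi$ descends to the quotient.

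Next, I would exhibit an inverse $\Psi^{-1}(x,\gamma,y):=\bigl[(x,\gamma),(y,\Phi(y))\bigr]$, where $\Phi(y)$ is viewed as the unit in $G$ lying over $\Phi(y)\in \go$. The composition $\Psi\circ\Psi^{-1}$ is clearly the identity, since $\gamma\cdot \Phi(y)^{-1}=\gamma$. For $\Psi^{-1}\circ\Psi$, given a representative $((y_{1},\gamma_{1}),(y_{2},\gamma_{2}))$ with $s(\gamma_{1})=s(\gamma_{2})$, we may right-translate by $\gamma_{2}^{-1}$ to obtain the alternative representative $((y_{1},\gamma_{1}\gamma_{2}^{-1}),(y_{2},\Phi(y_{2})))$, which is precisely $\Psi^{-1}\Psi[(y_{1},\gamma_{1}),(y_{2},\gamma_{2})]$. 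This verifies bijectivity.

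To verify that $\Psi$ is a groupoid homomorphism, I would use the standard trick in imprimitivity groupoids: if $s[(y_{1},\gamma_{1}),(y_{2},\gamma_{2})]=r[(y_{2}',\delta_{1}),(y_{3},\delta_{2})]$ in $G^{Z}$, i.e.\ $y_{2}\cdot G=y_{2}'\cdot G$, one may right-translate the second factor by the unique $\beta\in G$ with $(y_{2},\gamma_{2})=(y_{2}',\delta_{1})\cdot \beta$; this rewrites the second factor as $((y_{2},\gamma_{2}),(y_{3},\delta_{2}\beta))$, so the product in $G^{Z}$ is $[(y_{1},\gamma_{1}),(y_{3},\delta_{2}\beta)]$. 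Applying $\Psi$ and using $\beta=\delta_{1}^{-1}\gamma_{2}$ yields $(y_{1},\gamma_{1}(\delta_{2}\delta_{1}^{-1}\gamma_{2})^{-1},y_{3})=(y_{1},\gamma_{1}\gamma_{2}^{-1}\delta_{1}\delta_{2}^{-1},y_{3})$, which is exactly the product $(y_{1},\gamma_{1}\gamma_{2}^{-1},y_{2})\cdot(y_{2},\delta_{1}\delta_{2}^{-1},y_{3})$ in $Y*G*Y$. The identity $\Psi(g^{-1})=\Psi(g)^{-1}$ is an immediate calculation from the definitions.

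Finally, I would check that $\Psi$ is a homeomorphism. The map $Z*Z\to Y*G*Y$, $((y_{1},\gamma_{1}),(y_{2},\gamma_{2}))\mapsto(y_{1},\gamma_{1}\gamma_{2}^{-1},y_{2})$ is continuous because the multiplication, inversion, range and source maps of $G$ are continuous; by the universal property of the quotient topology on $G^{Z}=(Z*Z)/G$, it descends to a continuous $\Psi$. For $\Psi^{-1}$, I would compose the continuous section $(x,\gamma,y)\mapsto((x,\gamma),(y,\Phi(y)))$ of $Z*Z$—continuous because the unit inclusion $\go\hookrightarrow G$ is a topological embedding and $\Phi$ is continuous—with the quotient map $Z*Z\to G^{Z}$. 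The main obstacle, which is really mild, is precisely this verification that both $\Psi$ and $\Psi^{-1}$ are continuous at the level of the quotient; everything else is routine bookkeeping with the groupoid axioms.
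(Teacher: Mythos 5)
Your proposal is correct and follows essentially the same route as the paper: the same explicit map $\Psi[(y_1,\gamma_1),(y_2,\gamma_2)]=(y_1,\gamma_1\gamma_2^{-1},y_2)$, the same well-definedness check via the diagonal $G$-action, the same inverse (your $[(x,\gamma),(y,\Phi(y))]$ is the paper's $[(x,\gamma),(y,\gamma^{-1}\gamma)]$), and the same continuity argument from the quotient and relative topologies. The only difference is cosmetic: you are slightly more careful in aligning representatives when verifying multiplicativity, whereas the paper computes directly with representatives already sharing the middle entry.
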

\begin{proof}
Define $\Psi:G^{Z}\to Y*G*Y$ via
\begin{equation}
\Psi([(x,\alpha),(y,\beta)])=(x,\alpha\beta^{-1},y).\label{eq:isomorphism}
\end{equation}
To see that $\Psi$ is well defined note that $[(x,\alpha),(y,\beta)]=[(u,\gamma),(w,\delta)]$
if and only if $x=u,$ $y=w$, and there is $g\in G$ such that $\gamma=\alpha g$
and $\delta=\beta g$. Consequently
\[
\Psi([(u,\gamma),(w,\delta)])=(u,\gamma\delta^{-1},w)=(x,\alpha gg^{-1}\beta^{-1},y)=(x,\alpha\beta^{-1},y).
\]
Hence $\Psi$ is well defined. Similar reasoning shows that $\Psi$
is surjective. If\\ $[(x,\alpha),(y,\beta)],[(y,\beta),(u,\gamma)]\in G^{Z}$,
then
\[
\Psi([(x,\alpha),(y,\beta)]\cdot[(y,\beta),(u,\gamma)])=\Psi([x,\alpha],[u,\gamma])=(x,\alpha\gamma^{-1},u),
\]
while
\[
\Psi([(x,\alpha),(y,\beta)])\cdot\Psi([(y,\beta),(u,\gamma)])=(x,\alpha\beta^{-1},y)\cdot(y,\beta\gamma^{-1},u)=(x,\alpha\gamma^{-1},u).
\]
Therefore $\Psi$ is a groupoid morphism. Further, $\Psi$ is one-to-one,
because if\\ $\Psi([(x,\alpha),(y,\beta)])=\Psi([(u,\gamma),(w,\delta)])$,
for two pairs $[(x,\alpha),(y,\beta)]$ and $[(u,\gamma),(w,\delta)]$ in $G^{Z}$,
then
\[
(x,\alpha\beta^{-1},y)=(u,\gamma\delta^{-1},w),
\]
which implies that $x=u,$ $y=w$, and $\alpha=\gamma g$, where $g=\delta^{-1}\beta.$
So 
\[
\bigl((u,\gamma),(w,\delta)\bigr)\cdot g=\bigl((x,\gamma g),(y,\delta g)\bigr)=\bigl((x,\alpha),(y,\beta)\bigr),
\]
showing that $[(x,\alpha),(y,\beta)]=[(u,\gamma),(w,\delta)]$, i.e.,
$\Psi$ is one-to-one. The inverse map is given by
\[
\Psi^{-1}((x,\gamma,y))=[(x,\gamma),(y,\gamma^{-1}\gamma)],
\]
and both $\Psi$ and $\Psi^{-1}$ are continuous by the definition of
the topologies involved. 
\end{proof}

\p\label{par:indweakHaarSys} We next want to discuss how the Haar system $\lambda:=\{\lambda^{u}\}_{u\in\go}$ on $G$ may be ``induced'' to  Haar systems on $G^Z$ and $Y\ast G \ast Y$. We start by choosing an arbitrary continuous $\Phi$-system of measures
$\nu:=\{\nu_{u}\}_{u\in\go}$ on $Y$ (as per Definition \ref{def:systsmaps}). While, as the following lemma shows, we need $\nu$ to be a full $\Phi$-system in order
to obtain a Haar system on $G^Z$, we proceed with arbitrary systems. We will use proper non-full systems to define groupoid correspondences in later sections. We then promote $\nu$ to a system of measures $\fk: = \{\fk_u\}_{u\in \go}$ on $Z$ defined by the formula:
\begin{equation}
\mathfrak{k}_{u}(f):=\int_{G_{u}}\int_{Y}f(y,\gamma)d\nu_{r(\gamma)}(y)d\lambda_{u}(\gamma),\qquad u\in\go,\label{eq:measonZ}
\end{equation}
for $f\in C_{c}(Z)$. 
\begin{lem}
\label{lem:equiv_s-system}The system $\fk$
is an invariant $s$-system of measures on $Z$. It is proper if and only if 
$\nu$ is proper and it is full if and only if $\nu$ is full. \end{lem}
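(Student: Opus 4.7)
The plan is to check three properties in turn: (i) $\fk$ is a continuous $s$-system of measures on $Z$, (ii) $\fk$ is $G$-invariant under the right action $(y,\gamma)\cdot\beta = (y,\gamma\beta)$, and (iii) the propriety and fullness correspondences.

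For (i), fix $u \in \go$. The iterated integral defines a positive Radon measure on $Z$ with $\supp \fk_u \subset \sigma^{-1}(u)$, since $\lambda_u$ is supported on $G_u$ and $\nu_{r(\gamma)}$ on $\Phi^{-1}(r(\gamma))$. To show that $u \mapsto \fk_u(f)$ lies in $C_c(\go)$, I would first treat elementary tensors $f(y,\gamma) = \phi(y)\psi(\gamma)|_Z$ with $\phi \in C_c(Y)$ and $\psi \in C_c(G)$: the inner integral equals $\psi(\gamma)\cdot \nu(\phi)(r(\gamma))$, which lies in $C_c(G)$ because $\nu(\phi) \in C_c(\go)$ by the defining property of a continuous $\Phi$-system. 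The outer integral against $\{\lambda_u\}$, which is itself a continuous system (for $s$, inherited by inversion from the Haar system $\{\lambda^u\}$), then yields an element of $C_c(\go)$. For a general $f \in C_c(Z)$, I would extend $f$ via Tietze to $\tilde f \in C_c(Y \times G)$ and approximate $\tilde f$ uniformly on its compact support by finite sums of elementary tensors (using Stone--Weierstrass), passing to the limit under the integrals.

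For (ii), the required identity is
\begin{equation*}
\int f\bigl((y,\gamma)\cdot \beta\bigr)\,d\fk_{r(\beta)}(y,\gamma) = \int f(y,\gamma)\,d\fk_{s(\beta)}(y,\gamma).
\end{equation*}
Writing $\tilde g(\delta) := \int_Y f(y,\delta)\,d\nu_{r(\delta)}(y)$ and using the key observation $r(\gamma\beta) = r(\gamma)$, the left side becomes $\int_{G_{r(\beta)}} \tilde g(\gamma\beta)\,d\lambda_{r(\beta)}(\gamma)$. Right invariance of the right Haar system, $\int h(\gamma\beta)\,d\lambda_{r(\beta)}(\gamma) = \int h(\gamma)\,d\lambda_{s(\beta)}(\gamma)$, which follows from left invariance of $\{\lambda^u\}$ via the substitution $\gamma \mapsto \gamma^{-1}$, returns $\fk_{s(\beta)}(f)$.

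For (iii), fullness is essentially an identification of supports: $\supp \fk_u = \overline{\{(y,\gamma)\in Z: \gamma \in G_u,\, y \in \supp \nu_{r(\gamma)}\}}$, which equals $\sigma^{-1}(u)$ precisely when every $\nu_v$ is full on $\Phi^{-1}(v)$. Propriety is checked on elementary tensors: for $\phi \in C_c(Y)_+$ and $\psi \in C_c(G)_+$ localised near the unit $u \in G_u$, $\fk_u(\phi\psi) = \int \psi(\gamma)\nu_{r(\gamma)}(\phi)\,d\lambda_u(\gamma)$, which is positive for some choice precisely when $\nu_u$ is nonzero, using continuity of $v \mapsto \nu_v(\phi)$ and of $r$. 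I expect the main technical hurdle to be step (i), specifically the reduction from general $f \in C_c(Z)$ to elementary tensors, since elements of $C_c(Z)$ do not split naturally and the approximation requires care with the inductive limit topology.
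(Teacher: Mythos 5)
Your argument is correct, and its skeleton --- well-definedness of $\fk$ as an $s$-system, then the invariance computation, then the properness/fullness claims --- is exactly the paper's; in particular your invariance step (rewrite the inner integral as a function $\tilde g$ on $G$ using $r(\gamma\beta)=r(\gamma)$, then apply right-invariance of $\{\lambda_u\}_{u\in\go}$) is the computation given in the paper essentially verbatim. The one place you genuinely diverge is the step you flag as the main hurdle: showing that $u\mapsto\fk_u(f)$ lands in $C_c(\go)$. You do this by hand, reducing to elementary tensors via Tietze and Stone--Weierstrass; the paper instead observes that the inner integral $\gamma\mapsto\int_Y f(y,\gamma)\,d\nu_{r(\gamma)}(y)$ is itself a continuous $\pi_2$-system on $Z$ (with $\pi_2:Z\to G$ the second projection) and then composes it with the continuous $s$-system $\{\lambda_u\}$ on $G$. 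In the module-map picture of Remark \ref{rem:RevAltWay}, continuity of $\fk$ is just the composability of positive module maps $C_c(Z)\to C_c(G)\to C_c(\go)$, which sidesteps the approximation entirely; if you keep your route, make sure the Stone--Weierstrass approximants are all supported in one fixed compact set (multiply by a cutoff) so the limit passes through the integrals in the inductive limit topology. One caution on properness: the phrase ``positive for some choice precisely when $\nu_u$ is nonzero'' overstates what your local computation gives, since $\fk_u$ integrates $\nu_{r(\gamma)}$ over all of $G_u$ and can be nonzero even when $\nu_u=0$, provided some other point in the orbit of $u$ carries a nonzero measure; what you cleanly establish is the implication actually needed later ($\nu$ proper $\Rightarrow$ $\fk$ proper). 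The paper dismisses this part as ``clear'' and omits it, so you are no worse off than the source, but the converse direction requires an argument involving the orbit structure that neither treatment supplies.
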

\begin{proof}
First note that $\fk$ is a well-defined $s$-system.
The reason is that the map defined by the inner integral $\gamma\to\int_{Y}f(y,\gamma)d\nu_{r(\gamma)}$
is clearly a $\pi_{2}$-system, where $\pi_{2}$ is the projection
of $Z=Y*G$ onto $G$. We can then compose it with the source map
on $G$ to get an $s$-system on $Z$. That composition is effected
by the outer integral in (\ref{eq:measonZ}). 
To check that it is an equivariant system, let $\eta\in G$.
Then, using the right invariance of the system $\{\lambda_{u}\}_{u\in\go}$,
one can compute as follows: 
\begin{multline*}
\int_{Z}f(z\cdot\eta)d\mathfrak{k}_{r(\eta)}(z)=\int_{Z}f((y,\gamma)\cdot\eta)d\mathfrak{k}_{r(\eta)}(y,\gamma)\\
=\int_{G_{r(\eta)}}\int_{\Phi^{-1}(r(\gamma))}f(y,\gamma\eta)d\nu_{r(\gamma)}(y)d\lambda_{r(\eta)}(\gamma)\\
=\int_{G_{r(\eta)}}\int_{\Phi^{-1}(r(\gamma))}f(y,\gamma)d\nu_{r(\gamma)}(y)d\lambda_{s(\eta)}(\gamma)=\int_{Z}f(z)d\mathfrak{k}_{s(\eta)}(z),
\end{multline*}
which shows that $\fk$ satisfies the invariance
equation in Definition \ref{def:invAct}. 

The assertion about ``properness'' is clear, and the ``fullness'' assertion is also a straightforward calculation.  So, both will be omitted. 
\end{proof}
We will use $\fk$ to build a Haar system on
the imprimitivity groupoid $G^{Z}$, but first, note that with the identification
of $G^{Z}$ with $Y*G*Y$, using the map $\Psi$ from Proposition \ref{lem:Zgisomorphicto}, the unit space of $G^{Z}$
is identified with $Y$ through the map $(x,\Phi(x),x)\mapsto x$.
We will make this identification in the sequel without further comment.
Set $\pi_{z}:=\delta_{\{z\}}\times\mathfrak{k}_{s(z)}$ for all $z\in Z$.
Then by Lemma \ref{lem:equiv_s-system} the family $\{\pi_{z}\}_{z\in Z}$ is a proper $G$-invariant 
system of measures on $Z*Z$. Lemme 1.3 of \cite{ren:jot87} then shows that $\{\pi_{z}\}_{z\in Z}$ passes to a well defined system of measures $\dot{\pi}:= \{\dot{\pi}_{zG}\}_{zG\in Z/G}$ on $Z/G\simeq Y$ given by the equation
\[
\dot{\pi}(f)(z\cdot G): = \int_{Z/G} f(x\cdot G)\, d\pi_z(x), \qquad f\in C_c(Z/G),
\]
that is manifestly also proper.
Proposition 5.2 of \cite{kmrw:ajm98} shows that $\dot{\pi}$ is a Haar system on $G^{Z}$ provided that $\nu$ is a full $\Phi$-system. We will make this assumption for the remainder of the section.

\begin{defn}\label{def:dotpi}
    We call $\dot{\pi}$ the Haar system on $G^Z$ determined by $\lambda$, $\Phi$, and $\nu$.
\end{defn}

Turning now to the pullback of $G$ by $\Phi$, $Y\ast G \ast Y$, we use the identification of $G^Z$ with 
$Y*G*Y$ in Proposition \ref{lem:Zgisomorphicto} to prove
\begin{prop}\label{prop:Haarsystemimp}
 The equation 
\begin{equation}
\beta(f)(x):=\int_{G^{\Phi(x)}}\int_{Y}f(x,\gamma,y)d\nu_{s(\gamma)}(y)d\lambda^{\Phi(x)}(\gamma),\qquad x\in Y,\label{eq:HaarSystYGY}
\end{equation}
defines a Haar system, $\beta$, on $Y*G*Y$. 
\end{prop}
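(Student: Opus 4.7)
The plan is to obtain $\beta$ as the transport of the Haar system $\dot{\pi}$ on $G^Z$ (constructed in \ref{par:indweakHaarSys} via \cite[Proposition 5.2]{kmrw:ajm98}) under the groupoid isomorphism $\Psi: G^Z \to Y*G*Y$ of Proposition \ref{lem:Zgisomorphicto}. Since $\Psi$ is a topological groupoid isomorphism and $\dot{\pi}$ is a Haar system, the pushforward $\Psi_*\dot{\pi}$ is automatically a Haar system on $Y*G*Y$. Thus it suffices to verify that $\Psi_*\dot{\pi}$ equals $\beta$ as defined in \eqref{eq:HaarSystYGY}.

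First I would track the identifications on unit spaces: under $\Psi$, a unit $zG \in Z/G = (G^Z)^{(0)}$ corresponds to $(x, \Phi(x), x) \in (Y*G*Y)^{(0)}$, where $z = (x, \Phi(x))$, and both are then identified with $x \in Y$. Next, for $x \in Y$ and $z := (x, \Phi(x))$, I would unwind the definition of $\dot{\pi}_{zG}$ as the image of $\pi_z = \delta_{\{z\}} \times \mathfrak{k}_{\Phi(x)}$ under the quotient map $Z*Z \to G^Z$. For $f \in C_c(Y*G*Y)$, the identity $\Psi([z,(y,\gamma)]) = (x, \Phi(x)\gamma^{-1}, y) = (x, \gamma^{-1}, y)$ together with the definition \eqref{eq:measonZ} of $\mathfrak{k}$ yields
\[
\Psi_*\dot{\pi}_{zG}(f) \;=\; \int_{G_{\Phi(x)}} \int_{Y} f\bigl(x, \gamma^{-1}, y\bigr) \, d\nu_{r(\gamma)}(y) \, d\lambda_{\Phi(x)}(\gamma),
\]
where the inner integral makes sense because for $\gamma \in G_{\Phi(x)}$ one has $s(\gamma^{-1}) = r(\gamma)$, so the fiber is $\Phi^{-1}(r(\gamma))$ as required.

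Finally, the change of variable $\eta := \gamma^{-1}$ converts the outer integration against the right Haar $\lambda_{\Phi(x)}$ into one against the left Haar $\lambda^{\Phi(x)}$, by the defining relation $\int h(\gamma)\,d\lambda_u(\gamma) = \int h(\eta^{-1})\,d\lambda^u(\eta)$ recalled in subparagraph \ref{subp:Equivalence_as_corresp}, while sending $r(\gamma)$ to $s(\eta)$. The result is precisely the formula \eqref{eq:HaarSystYGY} defining $\beta^x$, completing the identification.

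The only real obstacle is bookkeeping: keeping the unit-space identifications straight through $\Psi$ and correctly converting between the left and right Haar systems on $G$ when passing from $\gamma$ to $\gamma^{-1}$. A direct verification of the Haar system axioms is possible as an alternative (continuity and fullness come from those of $\lambda$, $\nu$, and $\Phi$; the support condition is immediate from the domains of integration; and left-invariance would reduce to the left-invariance of $\{\lambda^u\}$ after an application of Fubini), but it is more cumbersome than exploiting the already established Haar system $\dot{\pi}$ on $G^Z$.
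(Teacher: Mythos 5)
Your proposal is correct and follows essentially the same route as the paper: the paper's proof likewise transports the Haar system $\dot{\pi}$ on $G^Z$ through the isomorphism $\Psi$ and verifies $\beta(f)(x)=\dot{\pi}(f\circ\Psi)(\Psi^{-1}(x))$. You simply make explicit the bookkeeping (unwinding $\pi_z=\delta_{\{z\}}\times\mathfrak{k}_{s(z)}$ and the passage from $\lambda_{\Phi(x)}$ to $\lambda^{\Phi(x)}$ via $\gamma\mapsto\gamma^{-1}$) that the paper leaves to the reader, and your computation checks out.
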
 
\begin{proof}
Recall the isomorphism $\Psi$ in (\ref{eq:isomorphism}) between
$G^{Z}$ and $Y*G*Y$. Then if $f\in C_{c}(Y*G*Y)$ and $x\in Y\simeq(Y*G*Y)^{(0)}$
one can check that
\[
\beta(f)(x)=\dot{\pi}(f\circ\Psi)(\Psi^{-1}(x)).
\]
Therefore, $\beta$ is a Haar system on $Y*G*Y$.
\end{proof}
An important corollary (of the proof) of Proposition \ref{prop:Haarsystemimp} is
\begin{cor}\label{cor:betadotpi}The map 
\begin{equation}
    \wt{\Psi}:C_c(Y\ast G\ast Y)\to C_c(G^Z)
\end{equation}
defined by the formula $\wt{\Psi}(f):=f\circ \Psi$, is an isomorphism of convolution algebras that extends to an isomorphism between the $C^*$-algebras,  $C^*(Y\ast G \ast Y,\beta)$ and $C^*(G^Z,\dot{\pi})$.   
\end{cor}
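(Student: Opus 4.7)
The plan is to leverage the groupoid isomorphism $\Psi$ from Proposition \ref{lem:Zgisomorphicto} together with the compatibility of Haar systems already established (essentially) in the proof of Proposition \ref{prop:Haarsystemimp}. Since $\Psi:G^Z \to Y\ast G\ast Y$ is a continuous groupoid isomorphism with continuous inverse, pullback by $\Psi$ is a linear bijection $\wt{\Psi}:C_c(Y\ast G\ast Y)\to C_c(G^Z)$ that also respects the inductive limit topology. So the question is purely about the algebraic structure together with the Haar system normalizations.

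First I would verify that $\wt{\Psi}$ is a $*$-homomorphism of the convolution algebras. The involution side is immediate: because $\Psi$ commutes with inversion, $\wt{\Psi}(f^*)(\gamma) = \overline{f(\Psi(\gamma)^{-1})} = \overline{f(\Psi(\gamma^{-1}))} = \wt{\Psi}(f)^*(\gamma)$. For the product, one writes out, for $\gamma\in G^Z$,
\begin{equation}
(\wt{\Psi}(f_1)*\wt{\Psi}(f_2))(\gamma) = \int f_1(\Psi(\delta))\, f_2(\Psi(\delta^{-1}\gamma))\, d\dot{\pi}^{r(\gamma)}(\delta),
\end{equation}
and uses the identity from the proof of Proposition \ref{prop:Haarsystemimp}, $\beta(h)(x)=\dot{\pi}(h\circ \Psi)(\Psi^{-1}(x))$, with $x=\Psi(r(\gamma))=r(\Psi(\gamma))$. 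This change of variables, combined with $\Psi(\delta^{-1}\gamma)=\Psi(\delta)^{-1}\Psi(\gamma)$, converts the integral to $\int f_1(\eta) f_2(\eta^{-1}\Psi(\gamma))\, d\beta^{r(\Psi(\gamma))}(\eta) = (f_1*f_2)(\Psi(\gamma)) = \wt{\Psi}(f_1*f_2)(\gamma)$.

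Next I would extend $\wt{\Psi}$ to the $C^*$-completions. This is the universal-property step: any non-degenerate $*$-representation $\rho$ of $C_c(G^Z,\dot{\pi})$ on a Hilbert space yields, via $\rho\circ\wt{\Psi}$, a non-degenerate $*$-representation of $C_c(Y\ast G\ast Y,\beta)$, and $\wt{\Psi}^{-1}$ implements the reverse correspondence. Since the universal $C^*$-norm on each side is the supremum over such representations, the map $\wt{\Psi}$ is isometric for the universal norms and therefore extends to a $*$-isomorphism $C^*(Y\ast G\ast Y,\beta)\to C^*(G^Z,\dot{\pi})$.

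I do not anticipate a serious obstacle here; the only bookkeeping item that deserves care is keeping the range/source conventions straight when applying the identity $\beta(h)(x)=\dot{\pi}(h\circ\Psi)(\Psi^{-1}(x))$ inside the convolution integral, since that identity is phrased for functions on the whole groupoid whereas the convolution involves a translated argument. Once one observes that $\Psi$ is a groupoid isomorphism carrying units to units and ranges to ranges, the translation is harmless and the computation closes.
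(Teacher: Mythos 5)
Your argument is correct and follows exactly the route the paper intends: the paper states this as a corollary of the proof of Proposition \ref{prop:Haarsystemimp}, relying precisely on the groupoid isomorphism $\Psi$ and the Haar-system identity $\beta(f)(x)=\dot{\pi}(f\circ\Psi)(\Psi^{-1}(x))$ that you invoke. Your write-up simply supplies the routine verifications (compatibility with convolution and involution, and passage to the universal $C^*$-completions) that the paper leaves implicit.
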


\p\label{par:topocorrespnd} It will be important for us that we can induce representations from $(G,\lambda)$ to $(G^Z,\beta)$ as in Section \ref{sec:induc-repr} using $\Phi$-system of measures that are not full using the theory of groupoid correspondences. We provide here the main set-up in a slightly more general setting. Assume 
that $G$ is a locally compact Hausdorff groupoid
endowed with a Haar system $\lambda$. Assume that $Z$ is a
right $G$-space with a continuous, open, and surjective anchor map $\sigma:Z\to \go$ and let $\alpha:=\{\alpha_u\}_{u\in\go}$ be a
full $\sigma$-system that is invariant under the action of $G$ on $Z$ in the sense of Definition \ref{def:invAct}. Since $\sigma(z\cdot \gamma) = s(\gamma)$, this means that $\int f(z\cdot \gamma)\,d \alpha_{r(\gamma)}(z)=\int f(z)\,d\alpha_{s(\gamma)}(z)$. Let
$G^Z:=Z{_\sigma*_\sigma}Z/G$ be the imprimitivity groupoid determined by $Z$. Then, as described before 
Definition \ref{def:dotpi}, 
$\alpha$ determines a Haar system $\beta$ on $G^Z$ via
\begin{equation}
  \label{eq:Haar_impgr}
  \int_{G^Z}f([x,y])\,d\beta^{xG}([x,y])=\int_Z f([x,y])\,d\alpha_{s(x)}(y),
\end{equation}
for all $xG\in Z/G$ (\cite[Proposition 5.2]{kmrw:ajm98}). This Haar system on
$G^Z$ is fixed for the remainder of the section.
 
 Let $\delta$ be a nonnegative function on $Z$ that is invariant under the action of $G$. This implies, of course, that the pointwise product $\delta\cdot \alpha$ is an invariant system of measures. In addition, we assume that $\delta \cdot \alpha$ is a \emph{proper} invariant system, which we will denote by $\alpha_{\delta}$. This assumption places constraints on how the support of $\delta$ is distributed. We shall write \[
\int_Z f(x)\, d\alpha_{\delta,u}(x)=\int_Z f(x)\, d(\alpha_{\delta})_u(x) = \int_Z f(x)\delta(x) \,d\alpha_u(x).
\]

\begin{lem}\label{lem:topcor}
$(Z,\alpha_\delta)$ is a topological 
correspondence in the sense of Definition \ref{def:topcorr} 
with the adjoining function $\Delta_a([x,y],z)=\delta(y)/\delta(x)$.
\end{lem}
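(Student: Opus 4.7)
The plan is to verify the four conditions of Definition \ref{def:topcorr} for a correspondence from $(G^Z,\beta)$ to $(G,\lambda)$. Conditions (1) and (2) are built into the setup of Paragraphs \ref{par:GsuperZ}--\ref{par:pullback}: $Z$ is a $G^Z$--$G$ bispace on which $G$ acts freely and properly. For condition (3), properness of $\alpha_\delta$ is an assumption, continuity of $\alpha_\delta$ as a $\sigma$-system follows from continuity of $\delta$ and of $\alpha$, and its $G$-invariance is the one-line calculation
\begin{align*}
\int f(z\gamma)\, d\alpha_{\delta,r(\gamma)}(z)
&= \int f(z\gamma)\,\delta(z)\, d\alpha_{r(\gamma)}(z) = \int f(z\gamma)\,\delta(z\gamma)\, d\alpha_{r(\gamma)}(z) \\
&= \int f(z)\,\delta(z)\, d\alpha_{s(\gamma)}(z) = \int f(z)\, d\alpha_{\delta,s(\gamma)}(z),
\end{align*}
using the $G$-invariance of $\delta$ in the second step and of $\alpha$ in the third.

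The main work is condition (4). Set $\Delta_a([x,y],z):=\delta(y)/\delta(x)$, which is positive and continuous where $\delta(x)>0$; because the outer measure $\alpha_{\delta,u}=\delta\cdot\alpha_u$ appearing in the adjoining equation is supported on $\{\delta>0\}$, only such values of $\Delta_a$ actually enter. Using $z$ as the canonical representative of $\rho(z)=zG$, formula \eqref{eq:Haar_impgr} parameterizes $\beta^{zG}$ by $\eta=[z,y]$ with $\sigma(y)=\sigma(z)$, whence $\eta^{-1}=[y,z]$ and $\eta^{-1}\cdot z=y$. Unwinding $\alpha_{\delta,u}=\delta\cdot\alpha_u$, the left-hand side of the adjoining equation becomes
\[
\int_Z\int_Z F([y,z],z)\,\delta(z)\, d\alpha_{\sigma(z)}(y)\, d\alpha_u(z),
\]
while the right-hand side, after the $\delta(z)$ coming from $\alpha_{\delta,u}$ cancels against the $\delta(z)$ in the denominator of $\Delta_a$, becomes
\[
\int_Z\int_Z F([z,y],y)\,\delta(y)\, d\alpha_{\sigma(z)}(y)\, d\alpha_u(z).
\]

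Setting $A(y,z):=F([y,z],z)\,\delta(z)$, the left side is $\iint A(y,z)\,d\alpha_u(y)\,d\alpha_u(z)$ and the right side is $\iint A(z,y)\,d\alpha_u(y)\,d\alpha_u(z)$, since both inner integrations are over $\sigma^{-1}(u)$. These agree by Fubini (applicable because $F$ is compactly supported and the measures are Radon) followed by a relabeling of the dummy variables. The expected obstacle is purely bookkeeping: keeping straight which slot of $[x,y]$ is which, picking the canonical representative in $Z/G$, and tracking the placement of $\delta$; once those conventions are fixed, the verification amounts to a change of variables plus Fubini.
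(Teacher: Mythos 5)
Your proof is correct and follows essentially the same route as the paper: conditions (1)--(3) are dispatched from the standing assumptions, and condition (4) reduces, after unwinding $\beta^{zG}$ via \eqref{eq:Haar_impgr} and $\alpha_{\delta,u}=\delta\cdot\alpha_u$, to swapping the two dummy variables in a double integral over the fibre $\sigma^{-1}(u)$ (the paper writes this swap as the chain $F([y,z],z)\delta(z)\mapsto F([z,y],[y,z]z)\delta([y,z]z)=F([z,y],y)\delta(y)$, which is your $A(y,z)\leftrightarrow A(z,y)$ relabeling). Your explicit invariance computation for $\alpha_\delta$ and your remark that only the set $\{\delta>0\}$ matters for defining $\Delta_a$ are details the paper leaves implicit, but they do not change the argument.
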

\begin{proof}
  Conditions (i), (ii), and (iii) of Definition \ref{def:topcorr} 
  are satisfied by our assumptions. We check the
  fourth condition of the definition. Let $F\in C_c(G^Z*Z)$. For
  $u\in\go$, we have
  \begin{multline*}
    \int_{Z}\int_{G^Z}F([x,y]^{-1},z)\,d\beta^{zG}([x,y])d\alpha_{\delta,u}(z)=\int_{Z}\int_Z
    F([z,y]^{-1},z)\,d\alpha_{u}(y)\,d\alpha_{\delta,u}(z)\\
    =\int_Z\int_Z
    F([y,z],z)\delta(z)\,d\alpha_u(y)\,d\alpha_u(z)\\=\int_Z\int_ZF([z,y],[y,z]z)\delta([y,z]z)d\alpha_u(y)d\alpha_u(z)\\
    =\int_Z\int_Z F([z,y],y)\delta(y)\,d\alpha_u(y)\,d\alpha_u(z)\\
    =\int_Z\int_Z
    F([z,y],y)\delta(y)/\delta(z)\,d\alpha_u(y)\,\delta(z)d\alpha_u(z)\\
    =\int_Z\int_ZF([z,y],y)\Delta_a([z,y],y)d\alpha_u(y)\,d\alpha_{\delta,u}(z)\\
    =\int_Z\int_{G^Z}F([z,y],y)\Delta_a([z,y],y)d\beta^{zG}([z,y])\,d\alpha_{\delta,u}(z).
  \end{multline*}
  The first line follows from  \eqref{eq:Haar_impgr},
  the second line from the definition of $\alpha_\delta$, and the third
  line follows since $(Z,\alpha)$ is an equivalence between $G^Z$ and
  $G$ (see \cite[Example 3.9 and Example 3.8]{hol:JOT17_2}). The
  remaining lines follow by defining
  $\Delta_a([x,y],z)=\delta(y)/\delta(x)$, from \eqref{eq:Haar_impgr},
  and from the definition of $\alpha_\delta$. 
\end{proof}

\begin{example}
  Returning to the main case we study, assume that $\Phi:Y\to\go$ is a continuous, open, surjective map. 
  Let
  $\nu=\{\nu_u\}_{u\in\go}$ be a full $\Phi$-system of measures. Let
  $Z=Y*G$ and $\alpha=\{\alpha_u\}_{u\in\go}$ be the invariant full $s$-system
  defined via
  \[
    \int_Z f(x,\gamma)\,d\alpha_u(x,\gamma)=\int_G\int_Y
    f(x,\gamma)\,d\nu_{r(\gamma)}(x)\,d\lambda_u(\gamma). 
  \]
  That is, $\alpha_u=\fk_u$ of Definition \ref{eq:measonZ}.
  Let $\bbd:Y\to\bbR^+$ be a continuous function. Then
  $\delta:Z\to\bbR^+$ defined via $\delta(x,\gamma)=\bbd(x)$ satisfies
  the hypothesis that
  $\delta((x,\gamma)\eta)=\delta(x,\gamma)$. Lemma \ref{lem:topcor}
  implies that $(Z,\alpha_\delta)$ is a topological correspondence,
  where
  \begin{equation}
  \label{eq:alpha_delta}
    \int_Z f(x,\gamma)\,d\alpha_{\delta,u}(x,\gamma)=\int_G \int_Y
    f(x,\gamma)\bbd(x)\,d\nu_{r(\gamma)}(x)\,d\lambda_u(\gamma) 
  \end{equation}
  and the adjoining function is defined via $\Delta_a((x,\gamma,y),(y,\eta))=\bbd(y)/\bbd(x)$.
\end{example}

\section{Inducing representations to the pullback groupoid}
\label{sec:induc-repr}

\p Continuing with the notation from Section \ref{sec:Haar-Systems}, observe that the title of the current section may appear confusing because the groupoid $G$ is not a subgroupoid of the pullback groupoid, $Y\ast G \ast Y$. Rather, it is a \emph{quotient} of $Y\ast G \ast Y$. However, as we have shown in Proposition \ref{lem:Zgisomorphicto}, $Y\ast G \ast Y$ is isomorphic to $G^Z$, which is the imprimitivity groupoid of the right $G$-space $Z:=Y\ast G$. 
So, it is natural to try to use the theory developed by Rieffel in \cite{rie:aim74} coupled with the equivalence theorem of \cite[Theorem 2.8]{mrw:jot87} to induce representations of $G$ to $G^Z$ and relate them to representations of $Y\ast G \ast Y$ using the isomorphism $\Psi$. 
One of the difficulties we face when trying to do this is the necessity in this paper to use non-full $\Phi$-systems of measures on $Y$. This leads to groupoid correspondences from $Y*G*Y$ to $G$ and we can 
use \cite{hol:JOT17_2} to induce representations  from $C^*(G,\lambda)$ to $C^*(Y*G*Y,\beta)$ and
\cite{ren:SIGMA14} to induce representations from $(G,\lambda)$ to $(Y*G*Y,\beta)$.
Our analysis leads to two different Hilbert space representations of the convolution algebra $C_c(Y\ast G \ast Y,\beta)$ that one may induce from a Hilbert space representation of the convolution algebra $C_c(G,\lambda)$. 
These are denoted $\Ind_{\text{J}}\fL$ and $\Ind_{\text{M}}\fL$ where $\fL$ is a prescribed Hilbert space representation of $C_c(G,\lambda)$ that comes from a representation $\hat{L}$ of the groupoid $G$ in the isomorphism groupoid of a Hilbert bundle built over $\go$.   
Our purpose in this section is to define $\Ind_{\text{J}}\fL$ and $\Ind_{\text{M}}\fL$ and to show that they are unitarily equivalent. (See Theorem \ref{thm:equivIndRep}.)

\p We begin at the level of groupoids with a unitary representation,
$\hat{L}=(\mu,\go*\cH,\hat{L})$, of $(G,\lambda)$ in the unitary
groupoid $\Iso(\go \ast \cH)$ and form its integrated form $\fL$,
representing the convolution algebra, $C_c(G,\lambda)$, on $L^2(\go
\ast \cH,\mu)$ (see \ref{subp:RepsofG}). These ingredients will be
fixed throughout this section, as will the modular function
$\Delta_{\mu}$ determined by $\mu$ and the  Haar system $\lambda$
on $G$. We also fix a full $\Phi$-system $\nu$ on $Y$ that determines a full invariant $s$-system 
$\alpha$ on $Z=Y*G$.
The Haar system $\beta$ on $Y*G*Y$ is fixed and given as in \eqref{eq:HaarSystYGY}.

We let   $\bbd:Y\to \bbR^+$ be a continuous function such that  $\bbd|_{\Phi^{-1}(u)}\ne 0$ for all $u\in \go$. Then $\bbd(x)$ defines a proper $\Phi$-system of measures $\nu_{\bbd}$ 
via
\begin{equation}
    \label{eq:nu_d}
    \int_Y f(x)\,d\nu_{\bbd,u}(x)=\int_Y f(x)\bbd(x)\,d\nu_u(x)
\end{equation}
for all $u\in \go$. Let $\delta(x,\gamma):=\bbd(x)$ and let $\alpha_\delta$ be the proper invariant $\sigma$-system defined as in \eqref{eq:alpha_delta}. 
Lemma \ref{lem:topcor}  implies that $(Z,\alpha_\delta)$ is a topological correspondece. 
We describe next  the process of inducing the groupoid representation $\hat{L}=(\mu,\go*\cH,\hat{L})$, of $(G,\lambda)$  to a groupoid representation of
$(Y*G*Y,\beta)$ via $(Z,\alpha_\delta)$ following Renault's ideas presented in \cite{ren:SIGMA14}. 
The result will be a triple,   
$\Ind \hat{L}:=(\mu_{\Ind},Y*\cK,\Ind \hat{L})$, which we call the \emph{(unitary) induced representation of $(Y\ast G \ast Y, \beta)$}. The measure,
$\mu_{\Ind}$, is defined on $Y$, the unit space of $Y\ast G \ast Y$, through the formula
\begin{equation}
    \label{eq:ind_meas}
  \int_Y f(x)\,d\mu_{\Ind}(x):=\int_{\go}\int_Y f(x)\,d\nu_{\bbd,u}(x)d\mu(u),
\end{equation}
for all $f\in C_c(Y)$. 
The following computation, \eqref{eq:muindqi}, shows that $\mu_{\Ind}$ is indeed quasi-invariant on $Y$ when $Y*G*Y$ endowed with the
 Haar system $\beta$ defined in \eqref{eq:HaarSystYGY}. The computation also shows that the modular function $\Delta_{\mu_{\Ind}}$
is given by the equation $\Delta_{\mu_{\Ind}}(x,\gamma,y)=\Delta_\mu(\gamma)\bbd(x)/\bbd(y)$ for all
$(x,\gamma,y)\in Y*G*Y$. The computation, itself is justified using Fubini's theorem and the quasi-invariance of $\mu$ on $\go$. 
\begin{multline}\label{eq:muindqi}
 \mu_{\Ind}\circ \beta(f)=\int_{Y}\int_{Y*G*Y}f(x,\gamma,y)\,d\beta^x(x,\gamma,y)\,d\mu_{\Ind}(x)\\
 =\int_{\go}\int_Y \int_{G} \int_Y f(x,\gamma,y)\,d\nu_{s(\gamma)}(y)d\lambda^u(\gamma)d\nu_{\bbd,u}(x)d\mu(u)\\
 =\int_{\go}\int_{G}\left(\int_Y\int_Y f(x,\gamma,y)\bbd(x)d\nu_{s(\gamma)}(y)d\nu_{r(\gamma)}(x)\right)\,d\lambda^u(\gamma)d\mu(u)\\
 =\int_{\go}\int_{G}\left( \int_Y\int_Yf(x,\gamma,y)\bbd(x)d\nu_{s(\gamma)}(y)d\nu_{r(\gamma)}(x)\right)\Delta_\mu(\gamma)\,d\lambda_u(\gamma)d\mu(u)\\
 =\int_{\go}\int_Y\int_{G}\int_Y f(x,\gamma,y)\Delta_\mu(\gamma)\bbd(x)/\bbd(y)\,d\nu_{r(\gamma)}(x)d\lambda_u(\gamma)\bbd(y)d\nu_u(y)d\mu(u)\\
 =\int_{\go}\int_Y\int_{G}\int_Y f(x,\gamma,y)\Delta_\mu(\gamma)\bbd(x)/\bbd(y)\,d\nu_{r(\gamma)}(x)d\lambda_u(\gamma)d\nu_{\bbd,u}(y)d\mu(u)\\
 =\int_Y\int_{Y*G*Y}f(x,\gamma,y)\Delta_\mu(\gamma)\bbd(x)/\bbd(y)\,d\beta_y(x,\gamma,y)\,d\mu_{\Ind}(y)\\
 =\Delta_{\mu_{\Ind}} (\mu_{\Ind}\circ \beta)^{-1}(f),
\end{multline}
for all $f\in C_c(Y\ast G \ast Y)$.
The Hilbert space bundle $Y*\cK$ 
is the pull-back Hilbert space
 bundle $Y*\cH: =\{(x,h)\mid x\in Y\text{ and }h\in \cH(\Phi(x))\}$ over $Y$, 
and the induced action of the pullback groupoid $Y*G*Y$ on $Y*\cH$ is given by
\begin{equation}
  \label{eq:ind_L}
(\Ind \hat{L})_{(x,\gamma,y)}(y,h): =(x,\hat{L}_\gamma h)  \quad 
(x,\gamma,y)\in Y*G*Y,\,\,
h\in \cH(\Phi(y)).
\end{equation}


The integrated form  of $\Ind \hat{L}$ is denoted $\Ind_{\text{J}} \fL$ and is defined on  $L^2(Y*\cH,\mu_{\Ind})$ via the equation
\begin{multline}\label{eq:integratedform}
\left( (\Ind_\text{J} \fL)(f) \fh\,,\,\fk \right)=\\
\int_Y\int_{Y*G*Y}f(x,\gamma,y)\bigl(
(\Ind L)_{(x,\gamma,y)}\fh(\Phi(y))\,,\,\fk(\Phi(x))
\bigr)\Delta_{\mu_{\Ind}}(x,\gamma,y)^{-1/2}
                                       \,d\beta^x(x,\gamma,y)\\ \cdots d\mu_{\Ind}(x)\\
=\int_{\go}\int_Y\int_G\int_Y
    f(x,\gamma,y)\bigl(L_\gamma(\fh(\Phi(y)))\,,\fk(\Phi(x))
    \bigr)\Delta_\mu(\gamma)^{-1/2}\bbd(y)^{1/2}/\bbd(x)^{1/2}\\ \cdots
  d\nu_{s(\gamma)}(y)d\lambda^u(\gamma)d\nu_{\bbd,u}(x)d\mu(u)\\
  =\int_{\go}\int_Y\int_G\int_Y
    f(x,\gamma,y)\bigl(L_\gamma(\fh(\Phi(y)))\,,\fk(\Phi(x))
    \bigr)\Delta_\mu(\gamma)^{-1/2}\bbd(x)^{1/2}\bbd(y)^{1/2}\\ \cdots
  d\nu_{s(\gamma)}(y)d\lambda^u(\gamma)d\nu_{u}(x)d\mu(u),
\end{multline}
for all $f\in C_c(Y*G*Y)$, $\fh,\fk\in L^2(Y*\cH,\mu_{\Ind})$.

We shall abbreviate $L^2(Y*\cH,\mu_{\Ind})$ with $H_{\Ind_{\text{J}}\fL}$.

\p\label{par:Rieffel_pers}  We turn now to a description of Rieffel's perspective on induced representations.
Recall from Lemma \ref{lem:topcor} that $(Z,\alpha_\delta)$, where $\delta(x,\gamma)=\bbd(x)$, is topological correspondence from $(Y*G*Y,\beta)$ to $(G,\lambda)$.
Theorem 2.10 of \cite{hol:JOT17_2} (see \ref{def:topcorr}) implies that $C_c(Z)=C_c(Y*G)$ can be completed to a $C^*$-correspondence from $C^*(Y*G*Y,
\beta)$ to $C^*(G,\lambda)$ via the formulas listed below.
The right action of $C_c(G,\lambda)$ on $C_c(Z)$ is given by the integral
\begin{equation}\label{eq:rightZBact}
    (\xi\cdot b)(x,\gamma)=\int_G \xi(x,\gamma\eta)b(\eta^{-1})\,d\lambda^{s(\gamma)}(\eta), \qquad \xi \in C_c(Z),\, b\in C_c(G,\lambda).
\end{equation}
The inner product on $C_c(Z)$ with values in $C_c(G,\lambda)$ is defined by the integral 
\begin{equation}\label{eq:CcGinnerprod}
  \langle \xi,\eta
  \rangle_{C_c(G)}(\gamma)  =\int_G\int_{Y} \overline{\xi(x,\zeta)}\eta(x,\zeta\gamma)\,d\nu_{\bbd,r(\zeta)}(x)d\lambda_{r(\gamma)}(\zeta),  
\end{equation}
and the left action of $C_c(Y\ast G \ast Y, \beta)$ on $C_c(Z)$ is given by the integral
\begin{equation}\label{eq:leftYGYZact}
   (a\cdot
  \xi)(x,\gamma)=\int_G\int_{Y}a(x,\zeta,y)\xi(y,\zeta^{-1}\gamma)\bbd(y)^{1/2}/\bbd(x)^{1/2}\,d\nu_{s(\zeta)}(y)d\lambda^{r(\gamma)}(\zeta),  
\end{equation}
where $ a \in C_c(Y\ast G \ast Y,\beta)$ and $\xi \in C_c(Z)$.

So we may define the representation of $C_c(Y\ast G \ast Y, \beta)$ that is induced in the sense of Rieffel from the representation $\fL$ which, recall, is the integrated form of the representation $\hat{L}=(\mu,\go\ast \cH,\hat{L})$ of the groupoid $G$. 
The Hilbert space for $\fL$ is $H_{\fL}:=L^2(\go\ast \cH,\mu)$.
The Hilbert
space $H_{\Ind_{M}\fL}$ of Rieffel's  induced
representation, $\Ind_{M}\fL$,  is the completion of the algebraic tensor
product   $C_c(Z)\odot 
H_\fL$ under the inner product
\begin{align*}
  \bigl( \xi\otimes \fh\,,\,\eta\otimes  \fk \bigr)&=\bigl( \fL(\langle
 \eta,\xi\rangle_{C^*(G)})\fh\,,\,\fk\bigr)
\end{align*}
for $\xi,\eta\in C_c(Z)$ and $\fh,\fk\in H_{\fL}$.  The induced
representation $\Ind_{\text{M}}\fL$ acts on $H_{\Ind_{\text{M}}\fL}$ via the formula
\begin{equation}
  \label{eq:ind_rep}
  (\Ind_{\text{M}} \fL)(f)\xi\otimes \fh: =(f\cdot \xi)\otimes \fh,
\end{equation}
for all $f\in C_c(Y*G*Y)$, $\xi\in C_c(Z)$, and $\fh\in H_{\fL}$.

The following theorem is anticipated in the discussion at the bottom of the first page of \cite{ren:SIGMA14}. We need a precise statement and a detailed proof.

\begin{thm}\label{thm:equivIndRep}{(cf. \cite[Lemma 2.3]{ren:jot91})} 
  Define $\fU:C_c(Z)\odot H_\fL\to H_{\Ind_{\text{J}}\fL}$ via
  \begin{equation}
    \label{eq:equiv_inducedrep}
    \fU(\xi\otimes \fh)(x)=\int_{G^{\Phi(x)}}\xi(x,\gamma)L_\gamma \fh(s(\gamma))\Delta_\mu(\gamma)^{-1/2}d\lambda^{\Phi(x)}(\gamma).
  \end{equation}
  Then $\fU$ extends to a Hilbert space isomorphism from $H_{\Ind_{\text{M}}\fL}$ to 
  $H_{\Ind_{\text{J}}\fL}$ that intertwines $\Ind_\text{M} \fL$ and $\Ind_{\text{J}} \fL$.
\end{thm}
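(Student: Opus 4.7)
My plan has four steps: verify that $\fU$ is well-defined on elementary tensors; that it preserves the Rieffel inner product, so extends to an isometry; that its range is dense in $H_{\Ind_{\text{J}}\fL}$; and that it intertwines the two induced representations on the dense subspace. Well-definedness is routine: for $\xi \in C_c(Z)$, the integrand in \eqref{eq:equiv_inducedrep} is supported in the compact slice $\supp\xi(x,\cdot) \subset G^{\Phi(x)}$, so the integral converges and direct-integral theory yields a Borel section $\fU(\xi\otimes\fh)$ of the pullback bundle $Y \ast \cH$ that lies in $L^2(Y\ast\cH,\mu_{\Ind})$ for suitable $\fh$.

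The heart of the argument is the isometry identity
\begin{equation*}
\bigl(\fU(\xi \otimes \fh)\,,\,\fU(\eta \otimes \fk)\bigr)_{H_{\Ind_{\text{J}}\fL}} \;=\; \bigl(\fL(\langle \eta,\xi \rangle_{C^*(G)})\fh\,,\,\fk\bigr)_{H_\fL}.
\end{equation*}
To prove it, I would expand the left side using \eqref{eq:ind_meas} and the definition of $\fU$, obtaining an iterated integral over $u\in\go$, $x\in Y$ with $\Phi(x)=u$, and $\gamma,\gamma'\in G^u$. Substituting $\gamma=\gamma'\zeta$ with $\zeta\in G^{s(\gamma')}$ (left invariance of $\lambda$), applying the cocycle identity $\Delta_\mu(\gamma'\zeta)=\Delta_\mu(\gamma')\Delta_\mu(\zeta)$, and using the unitarity of $L_{\gamma'}$ to consolidate $\bigl(L_{\gamma'\zeta}\fh(s(\zeta)),L_{\gamma'}\fk(s(\gamma'))\bigr)=\bigl(L_\zeta\fh(s(\zeta)),\fk(s(\gamma'))\bigr)$ reduces the integrand to a product weighted by $\Delta_\mu(\gamma')^{-1}\Delta_\mu(\zeta)^{-1/2}$. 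Writing the outer $x,u$-integration as $\int_G d(\mu\circ\lambda)(\gamma')\int_Y d\nu_{\bbd,r(\gamma')}(x)\cdots$ and applying quasi-invariance $d(\mu\circ\lambda)=\Delta_\mu\,d(\mu\circ\lambda)^{-1}$ to flip $\gamma'$ to $\gamma'^{-1}$ absorbs the factor $\Delta_\mu(\gamma')^{-1}$; critically, this also switches the $x$-slice from $\Phi(x)=r(\gamma')$ to $\Phi(x)=s(\gamma')$, exactly matching the measure structure of $\langle\eta,\xi\rangle_{C^*(G)}$. After the further substitution $\alpha=\gamma'^{-1}\in G_u$ and a Fubini exchange, the resulting double integral over $(\alpha,x)$ is exactly \eqref{eq:CcGinnerprod} evaluated at $\zeta$, and the full expression matches the integrated-form formula \eqref{eq:repgenG} applied to $\langle\eta,\xi\rangle$. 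This is essentially Renault's computation in \cite[Lemma 2.3]{ren:jot91} augmented by tracking the Holkar weight $\bbd$ through the identity $d\nu_{\bbd,u}=\bbd\,d\nu_u$; keeping the half-powers of $\Delta_\mu$ and $\bbd$ balanced through the left/right Haar-system conversions is the main technical obstacle.

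Density of the range follows from an approximate-unit argument: for $\phi\in C_c(Y)$ and $b_n\in C_c(G)$ an approximate identity concentrated near the unit space $\go$, set $\xi_n(x,\gamma)=\phi(x)b_n(\gamma)$. The explicit formula \eqref{eq:equiv_inducedrep} then yields $\fU(\xi_n\otimes\fh)(x)\to\phi(x)\fh(\Phi(x))$ in $L^2(Y\ast\cH,\mu_{\Ind})$ for $\fh$ a bounded continuous compactly supported section, and pull-back sections of this form span a dense subspace by the disintegration $d\mu_{\Ind}=d\nu_{\bbd,u}\,d\mu$ together with the fact that $C_c(Y)$ separates points of $Y$. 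For the intertwining, I would substitute the left-action formula \eqref{eq:leftYGYZact} into $\fU((f\cdot\xi)\otimes\fh)$ and compare with $(\Ind_{\text{J}}\fL)(f)\fU(\xi\otimes\fh)$ computed from \eqref{eq:integratedform}: after one left-invariance substitution on $G$, the adjoining weight $\bbd(y)^{1/2}/\bbd(x)^{1/2}$ combines with the modular factor $\Delta_{\mu_{\Ind}}^{-1/2}=(\Delta_\mu(\gamma)\bbd(x)/\bbd(y))^{-1/2}$ to produce matching integrands. With isometry, density, and intertwining verified on the dense subspace $C_c(Z)\odot H_\fL$, $\fU$ extends uniquely to the required unitary intertwiner $H_{\Ind_{\text{M}}\fL}\to H_{\Ind_{\text{J}}\fL}$.
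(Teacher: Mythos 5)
Your proposal is correct and follows essentially the same route as the paper's proof: a direct computation of the isometry identity using Fubini, the quasi-invariance of $\mu$ with the cocycle property of $\Delta_\mu$, and left-invariance of the Haar system, followed by a Renault-style dense-range argument and a parallel computation for the intertwining relation that tracks the adjoining weight $\bbd$. The only cosmetic differences are that you run the isometry computation starting from the $H_{\Ind_{\text{J}}\fL}$ side rather than from the Rieffel inner product, and you sketch the approximate-identity density argument explicitly where the paper simply cites \cite[Lemma 2.3]{ren:jot91}.
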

\begin{proof}
We prove first the $\fU$ is an isometry. 
Let $\xi,\eta\in C_c(Z)$ and $\fh,\fk\in
H_\fL=L^2(\go*\cH,\mu)$. 
Then
\footnote{When a formula is broken over two or more lines because it won't fit on one line, the subsequent parts are preceded by ellipses ( $\dots$). It is intended that the subsequent parts are direct continuations of the preceding parts and nothing else is involved in the formulas.}
\begin{multline*}
  \bigl( \xi\otimes \fh\,,\,\eta\otimes \fk \bigr)=\\
 =\int_{\go}\int_G \langle \eta,\xi \rangle_{C_c(G)}(\gamma)\bigl(
    L_\gamma \fh(s(\gamma))\,,\,\fk(r(\gamma))
    \bigr)\Delta_\mu^{-1/2}(\gamma)\,d\lambda^{u}(\gamma)d\mu(u)\\
  =\int_{\go}\int_G\int_G\int_Y
    \overline{\eta(x,\delta)}\xi(x,\delta\gamma)\bigl( L_\gamma
    \fh(s(\gamma))\,,\,\fk(r(\gamma)) \bigr)\Delta_\mu(\gamma)^{-1/2}\bbd(x)\\
  \cdots d\nu_{r(
    \delta)}(x)d\lambda_u(\delta)d\lambda^u(\gamma)d\mu(u).
\end{multline*}
By Fubini's theorem, the equation may be continued with
\begin{multline*}
  \int_{\go}\int_G\int_G\int_Y
    \overline{\eta(x,\delta)}\xi(x,\delta\gamma)\bigl( L_\gamma
    \fh(s(\gamma))\,,\,\fk(r(\gamma)) \bigr)\Delta_\mu(\gamma)^{-1/2}\bbd(x)\\
  \cdots d\nu_{r(
    \delta)}(x)d\lambda^u(\gamma)d\lambda_u(\delta)d\mu(u).
\end{multline*}
By the quasi-invariance of $\mu$ and homomorphic property of $\Delta_{\mu}$ applied to the function
defined by the two inner integrals, the equation may be continued with
\begin{multline*}
  \int_{\go}\int_G\int_G\int_Y
  \overline{\eta(x,\delta)}\xi(x,\delta\gamma)\bigl(
  L_\gamma \fh(s(\gamma))\,,\,\fk(r(\gamma))
  \bigr)\Delta_\mu(\gamma)^{-1/2}\Delta_\mu(\delta)^{-1}\bbd(x) \\ 
  \cdots d\nu_u(x)d\lambda^{s(\delta)}(\gamma)d\lambda^u(\delta)d\mu(u),
\end{multline*}
which by the invariance of the Haar system, and the substitution $\delta\gamma\mapsto
\gamma$, yields
\begin{multline*}
  \int_{\go}\int_G\int_Y\int_G
  \overline{\eta(x,\delta)}\xi(x,\gamma)\bigl(
  L_{\delta^{-1}\gamma}\fh(s(\gamma))\,,\,\fk(r(\gamma))
  \bigr)\Delta_\mu(\delta^{-1}\gamma)^{-1/2}\Delta_\mu(\delta)^{-1}\bbd(x)\\
  \cdots d\lambda^u(\gamma)d\nu_u(x)d\lambda^u(\delta)d\mu(u)\\
  =\int_{\go}\int_Y\int_G\int_G \overline{\eta(x,\delta)}\xi(x,\gamma)\bigl(
  L_{\delta^{-1}\gamma}\fh(s(\gamma))\,,\,\fk(r(\gamma))
  \bigr)\Delta_\mu(\gamma)^{-1/2}\Delta_\mu(\delta)^{-1/2}\bbd(x)\\
  \cdots d\lambda^u(\gamma)d\lambda^u(\delta)d\nu_u(x)d\mu(u).
\end{multline*}
\begin{multline}\label{eq:innerpr_Rieffel}
  =\int_{\go}\int_Y \left(\int_G \xi(x,\gamma) L_\gamma
    \fh(s(\gamma))\Delta_\mu(\gamma)^{-1/2}\,d\lambda^u(\gamma)\,,\right.\\
    \cdots \left.\,\int_G
    \eta(x,\delta)L_\delta
    \fk(s(\delta))\Delta_\mu(\delta)^{-1/2}\,d\lambda^u(\delta)
  \right)
   \bbd(x)d\nu_u(x)d\mu(u)\\
   =\int_Y \bigl(\fU(\xi\otimes\fh)(x)\,,\,\fU(\eta\otimes\fk)(x)\bigr)\,d\mu_{\Ind}(x)=\bigl( \fU(\xi\otimes \fh)\,,\,\fU(\eta\otimes\fk)\bigr).
\end{multline}
Thus $(\fU(\xi\otimes
  \fh)\,,\,\fU(\eta\otimes \fk))=(\xi\otimes \fh\,,\,\eta\otimes \fk)$ for all
  $\xi,\eta\in C_c(Y*G)$ and $\fh,\fk \in H_{\cL}$. Hence $\fU$ is an
  isometry. Arguing as  in the proof of  \cite[Lemma 2.3]{ren:jot91}
  one can prove that  $\fU$ has dense range. Thus $\fU$ is a unitary.

  Let $f\in C(Y*G*Y)$, $\xi,\eta\in C_c(Z)$, and $\fh,\fk\in H_\fL$. Then
\begin{multline*}
  \bigl( (\Ind_{\text{M}} \fL)(f)\xi\otimes \fh\,,\,\eta\otimes \fk \bigr)=\bigl( (f\cdot\xi)\otimes\fh\,,\,\eta\otimes\fk \bigr)\\
   =\int_{\go}\int_Y\int_G\int_G \overline{\eta(x,\delta)}(f\cdot \xi)(x,\gamma)\bigl(
  L_{\delta^{-1}\gamma}\fh(s(\gamma))\,,\,\fk(r(\gamma))
  \bigr)\bbd(x)\Delta_\mu(\gamma)^{-1/2}\Delta_\mu(\delta)^{-1/2}\\
  \cdots d\lambda^u(\gamma)d\lambda^u(\delta)d\nu_u(x)d\mu(u)\\
 =\int_{\go}\int_Y
 \int_G\int_G\overline{\eta(x,\delta)}\left(  \int_G\int_Y f(x,\zeta,y)\xi(y,\zeta^{-1}\gamma)\bbd(y)^{1/2}
 /\bbd(x)^{1/2}d\nu_{s(\zeta)}(y)d\lambda^u(\zeta)\right)\\
 \cdots \bigl(
 L_\gamma \fh(s(\gamma))\,,\,L_\delta \fk(s(\delta)) \bigr)\bbd(x)\Delta_\mu(\gamma)^{-1/2}\Delta_\mu(\delta)^{-1/2}d\lambda^u(\gamma)d\lambda^u(\delta)d\nu_u(x)d\mu(u)
\end{multline*}
which, by Fubini's Theorem, equals
\begin{multline*}
  \int_{\go}\int_Y\int_G\int_X f(x,\zeta,y)\left(
    \int_G\int_G\xi(y,\zeta^{-1}\gamma)\overline{\eta(x,\delta)}
    \bigl( L_\gamma \fh(s(\gamma))\,,\,L_\delta \fk(s(\delta))
    \bigr)\right.\\
  \cdots \left.\bbd(y)^{1/2}\bbd(x)^{1/2} \Delta_\mu(\gamma)^{-1/2}
    \Delta_\mu(\delta)^{-1/2}d\lambda^u(\gamma)d\lambda^u(\delta)
  \right) d\nu_{s(\zeta)}(y)d\lambda^u(\zeta)d\nu_u(x)d\mu(u)
\end{multline*}
which, by the invariance of the Haar system $\zeta^{-1}\gamma\mapsto
\gamma$ applied to the most inner integral, equals
\begin{multline*}
  \int_{\go}\int_Y\int_G\int_Y f(x,\zeta,y)\\
  \cdots \left(
    \int_G\int_G\xi(y,\gamma)\overline{\eta(x,\delta)}\bigl( L_\zeta
    L_\gamma \fh(s(\gamma))\,,\,L_\delta \fk(s(\delta))\bigr)
 \Delta_\mu(\zeta)^{-1/2}\Delta_\mu(\gamma)^{-1/2}\Delta_\mu(\delta)^{-1/2}d\lambda^{s(\zeta)}(\gamma)d\lambda^u(\delta)\right)\\
 \cdots \bbd(y)^{1/2}\bbd(x)^{1/2}d\nu_{s(\zeta)}(y)d\lambda^u(\zeta)d\nu_u(x)d\mu(u)\\
 =\int_{\go}\int_Y\int_G\int_Y f(x,\zeta,y)\\
 \cdots \left(
    \int_G\int_G\xi(y,\gamma)\overline{\eta(x,\delta)}\bigl( L_\zeta
    L_\gamma \fh(s(\gamma))\,,\,L_\delta \fk(s(\delta))\bigr)
 \Delta_\mu(\gamma)^{-1/2}\Delta_\mu(\delta)^{-1/2}d\lambda^{s(\zeta)}(\gamma)d\lambda^u(\delta)\right)\\
 \cdots \bbd(y)^{1/2}\bbd(x)^{1/2}\Delta_\mu(\zeta)^{-1/2}d\nu_{s(\zeta)}(y)d\lambda^u(\zeta)d\nu_u(x)d\mu(u). 
\end{multline*}
\begin{multline}\label{eq:induced_Rieffel}
 =\int_{\go}\int_Y\int_G\int_Y f(x,\zeta,y)\\
 \cdots \left( L_\zeta\bigl(
   \int_G \xi(y,\gamma)L_\gamma
   \fh(s(\gamma))\Delta_\mu(\gamma)^{-1/2}d\lambda^{s(\zeta)}(\gamma)
   \bigr)\,,
 \,\int_G \eta(x,\delta)L_\delta
   \fk(s(\delta))\Delta_\mu(\delta)^{-1/2}d\lambda^u(\delta)\right)\\
   \cdots \Delta_\mu(\zeta)^{-1/2}\bbd(y)^{1/2}/\bbd(x)^{1/2}d\nu_{s(\zeta)}(y)d\lambda^u(\zeta)d\nu_{\bbd,u}(x)d\mu(u).
\end{multline}
Hence
  \[
    \bigl( (\Ind_{\text{M}}\fL) (f)\xi\otimes \fh\,,\,\eta\otimes \fk \bigr)=\bigl(
    (\Ind_\text{J} \fL)(f)\fU(\xi\otimes \fh)\, ,\,\fU(\eta\otimes \fk) \bigr)
  \]
  for all $f\in C_c(X*G*X), \xi,\eta\in C_c(Z),\fh,\fk\in H_{\fL}$.  Therefore $\Ind_\text{M}\fL$ 
  and $\Ind_\text{J} \fL$ are unitarily equivalent.
\end{proof}

\section*{{\Large{Part II}}\\From Deaconu-Renault Groupoids to Proto-Multiresolution Analyses}

\section{The Deaconu-Renault Groupoid} \label{sect:DR-gpd}

\p The principal player in this paper is the \emph{Deaconu-Renault groupoid} of a local homeomorphism.

\begin{defn}\label{def:D-Rgpd}Let $X$ be a second countable, compact Hausdorff space and let $\sigma:X\to X$ be a surjective local homeomorphism. The \emph{Deaconu-Renault Groupoid} defined by $X$ and $\sigma$ is    $$G(X,\sigma):= \{(x,k-l,y)\in X\times \bbZ \times X\mid k,l\in \bbZ_+,\,\sigma^k(x)=\sigma^l(y)\}.$$
The product $(x,k-l,y)(w,m-n,z)$ is defined only when $y=w$ and then 
$$(x,k-l,y)(y,m-n,z):=(x,(k+m)-(l+n),z).$$
The inverse of $(x,k-l,y)$ is $(y,l-k,x)$.
\end{defn}

The unit space of $G(X,\sigma)$ is identified with $X$ and the maps $r:G(X,\sigma)\to X$ and $s:G(X,\sigma)\to X$ given by $r(x,k-l,y):= x$ and $s(x,k-l,y):=y$ are the \emph{range} and \emph{source} maps. Because $\sigma$ is assumed to be a surjective local homeomorphism it is easy to see that $G(X,\sigma)$ viewed as a subset of $X\times \bbZ \times X$ with the relative topology is locally compact and Hausdorff and that the maps $r$ and $s$ are local homeomorphisms. Consequently, $G(X,\sigma)$ is an example of what nowadays is  universally known as an \emph{\'{e}tale groupoid}\footnote{The terminology used in \cite{Ren_LNM793} for such groupoids is ``r-discrete''.}.

\subp It will be useful at times to ``change variables'' and rewrite the defining formulas for $G(X,\sigma)$ as Valentin Deaconu did in \cite{Dea_TAMS95}:
\begin{equation}\label{def:D-RgpdAlt}
    G(X,\sigma)=\{(x,n,y)\in X\times \bbZ \times X \mid \exists\, k,l\geq 0,\, n=l-k,\sigma^k(x)=\sigma ^l(y)\}.
\end{equation}
Two triples, $(x,n,y)$ and $(w,m,z)$, can be multiplied iff $y = w$, in which case $(x,n,y)(y,m,z)=(x,m+n,z)$; $(x,n,y)^{-1}=(y,-n,x)$.

This version makes many formulas easier to read than the original, but the original becomes preferable when one wants to ``get under the hood'' to parse formulas.

\subp While at first glance, the definitions of $G(X,\sigma)$ may seem contrived, close inspection reveals the natural connection of $G(X,\sigma)$ with the algebraic analysis of Murray and von Neumann devoted to building von Neumann algebras from transformation groups. One should keep in mind, also, that the dynamics determined by $\sigma$ involve not only the forward orbits of points, $\{\sigma^k(x)\}_{k=0}^{\infty}$, but also their backward orbits, $\{\sigma^{-k}(x)\}_{k=0}^{\infty}$.  If $\sigma$ is not invertible, then the backward orbits involve sets of cardinality different from $1$. The groupoid $G(X,\sigma)$ provides a useful way to account for this feature of $\sigma$.

Deaconu \cite{Dea_TAMS95} received inspiration from Renault's thesis \cite{Ren_LNM793} in which Renault built groupoids naturally connected to the representations of the Cuntz algebras.  In that setting, $X$ is a certain one-sided shift space and $\sigma$ is the shift. In \cite[Proposition 2.8]{Renault-Cuntz-like}, Renault identifies when $G(X,\sigma)$ is isomorphic to the groupoid of germs of maps associated to the set-theoretic powers $\{\sigma^k\circ \sigma^{-l}\}$ where $k$ and $l$ run over all non-negative integers, i.e., $\sigma^k\circ \sigma^{-l}$  is viewed as a map on the power set of $X$. We shall meet these again shortly.

\p \label{p:defDRgpdalg} The fact that $G(X,\sigma)$ is \'{e}tale implies that the family of counting measures on the fibres of $r$, taken together, constitute a Haar system, $\lambda = \{\lambda^x\}_{x\in X}$, on $G(X,\sigma)$\footnote{We emphasize that $\lambda$ is full and that any other Haar system on $G(X,\sigma)$ is proportional to $\lambda$, provided there are no nontrivial $\sigma$-invariant subsets of $X$. If there are such subsets, then the ``constant of proportionality'' may vary across them.}. With this Haar system, the formula for convolution becomes
\begin{multline}\label{eq:mult}
    f\ast g(x,k-l,y):= \int_{G(X,\sigma)}f(\alpha)g(\alpha^{-1}\gamma)\,d\lambda^{r(\gamma)}(\alpha)\\=\sum f(x,m-n,z)\cdot g(z,(n+k)-(m+l),y),
\end{multline} 
where $\alpha:=(z,n-m,w)$, $\gamma: = (x,k-l,y)$, and where
the sum ranges over all $m,n$ and $z$ such that $\sigma^m (x) = \sigma^n (z)$. The adjoint of a function $f$ is defined by
\begin{equation}\label{eq:adj}
    f^*(x,k-l,y):=\overline{f(y,l-k,x)}.
\end{equation}

\subp\label{subp:ZUV} The unwieldy formula \eqref{eq:mult} by itself makes it difficult to grasp the structure of the convolution algebra $C_c(G(X,\sigma),\lambda)$. However, when applied to special functions that are evident generators of $C_c(G(X,\sigma),\lambda)$ the formula becomes easier to manage and reveals what is going on. First, let $U$ and $V$ be open subsets of $X$, suppose $k$ and $l$ are such that the restrictions, $\sigma^l|U$ and $\sigma^k|V$, are homeomorphisms with common range, i.e., $
\sigma^k(U)=\sigma^l(V)$, and let
\begin{equation}
    Z(U,V,k,l):=\{(x,k-l,y)\in G(X,\sigma)\mid x\in U, y\in V\}.
\end{equation}
Then $Z(U,V,k,l)$ is essentially the graph of $(\sigma^l|_V)^{-1}\circ (\sigma^k|_U)$ and is a \emph{$G$-set} in the sense of Renault \cite[Definition 1.10]{Ren_LNM793}. It is also an open subset of $G(X,\sigma)$. Under an evident notion of composition discussed in \cite[Page 10]{Ren_LNM793}, these open $G$-sets form an inverse semigroup which also is a basis for the topology of $G(X,\sigma)$. (The germs of such $G$-sets, mentioned above, are explored in \cite{Renault-Cuntz-like}.)
If a $G$-set $Z:=Z(U,V,k,l)$ happens also to be  closed then its characteristic function $1_Z$ is an element of $C_c(G(X,\sigma),\lambda)$ which, when properly scaled, yields a partial isometry in $C_c(G(X,\sigma),\lambda)$. 

\subp\label{par:Rnm} Another class of subsets of $G(X,\sigma)$ that plays an important role in our analysis is defined by the equations
\begin{equation}\label{eq:Rnm}
R_{n,m}:=\{(x,n-m,y)\in G(X,\sigma)\mid \sigma^n(x)=\sigma^m(y)\}, \qquad n,m\geq 0.
\end{equation}
Each $R_{n,m}$ is compact and open in $G(X,\sigma)$, so its characteristic function $1_{R_{n,m}}$ lies in $C_c(G(X,\sigma),\lambda)$.
Also, $1_{R_{n,m}}^* =1_{R_{m,n}}$. Perhaps most important, $R_{n,m}$ may be viewed as the graph of the multi-valued function $\sigma^{-m}\circ \sigma^n$; if $m=0$, then obviously, $R_{n,0}$ is the graph of $\sigma^n$.

Continuing the discussion from a slightly different perspective, we find that $1_{R_{0,0}}$ is the identity in $C_c(G(X,\sigma),\lambda)$.  
Further, each $R_{n,n}$ may be viewed as an equivalence relation on $X$. 
Moreover, their collection is nested: If $n\leq m$, then $R_{n,n}\subseteq R_{m,m}$. 
We may view each $R_{n,n}$ as a subgroupoid of $G(X,\sigma)$ whose $C^*$-algebra, $C^*(R_{n,n},\lambda)\subset C^*((G(X,\sigma),\lambda)$, is the cross sectional $C^*$-algebra of a matrix bundle over $X$ and so is a $C^*$-algebra with continuous trace. 
The union $\bigcup_n R_{n,n}:=R_{\infty}$ is also a subgroupoid of $G(X,\sigma)$ whose $C^{*}$-algebra, $C^*(R_{\infty},\lambda)$, is the inductive limit, $\underset{\longrightarrow }\lim \,C^*(R_{n,n}, \lambda)$. These facts about the equivalence relations $R_{n,n}$ and their $C^*$-algebras are proved and developed further in \cite{Ren_ETDN_05}. There, the equivalence relation $R_{n,n}$ would be called a \emph{proper equivalence relation} because the quotient space $X/R_{n,n}$ is compact and Hausdorff and the quotient map $\pi:X\to X/R_{n,n}$ is proper. The equivalence relation $R_{\infty}$, then, would be called \emph{approximately proper}.

\subp\label{subp:Dzero} The set $R_{1,0} = Z(X,X,1,0) = \{(x,1,y)\mid y=\sigma(x)\}$ plays a central role in the theory because, as we have already noted, it is essentially the graph of $\sigma$. This observation leads to the straightforward calculation of the product $1_{R_{1,0}}\ast (1_{R_{1,0}})^*$ in $C_c(G(X,\sigma),\lambda)$. The result is: 
\begin{equation}\label{eq:R10}
    1_{R_{1,0}}\ast (1_{R_{1,0}})^*(x,k-l,y)= |\sigma^{-1}(\sigma(x))| 1_{R_{1,1}}(x,k-l,y),
    \end{equation}
where $\sigma^{-1}(\sigma(x)):= \{y\in X \mid \sigma(y)=\sigma(x)\}$ and $\vert \sigma^{-1}(\sigma(x)) \vert $ is its cardinality. Further reflection reveals that the function $x \to |\sigma^{-1}(\sigma(x))|$ is an integer-valued function that lies in $C(X)$ and so is finitely-valued and constant on the connected components of $X$. Further still, it is strictly positive since $\sigma$ is surjective. Its \emph{reciprocal}, denoted here by $D_0$, plays an important role in various parts of wavelet theory. 


\begin{defn}\label{def:masteriso} The function $S$ in $C_c(G(X,\sigma))$ defined by
\begin{equation}
    S(x,k-l,y):= D_0^\frac{1}{2}(x)\cdot 1_{R_{1,0}}(x,k-l,y), \qquad (x,k-l,y)\in G(X,\sigma)
\end{equation}
is called the \emph{master isometry} in $C_c(G(X,\sigma))$\footnote{This terminology comes from \cite{CMS2012} where isometries with properties similar to those of $S$ are discussed.}.
\end{defn}

The equation \eqref{eq:R10} shows that $S$ is, in fact, an isometry. Also note that equation \eqref{eq:R10} shows that $SS^*$ is the projection $1_{R_{1,1}}$ in $C_c(G(X,\sigma))$.

\p \label{p:DefPots}We call $D_0$ the \emph{fundamental potential (function)} of the canonical transfer operator for (or associated to) $\sigma$. To understand where this terminology comes from and to see its importance in our narrative, consider a positive linear map $\cL$ from $C(X)$ to $C(X)$. Then $\cL$ is given  by a unique continuous family of positive Borel measures on $X$, say $\{\alpha_x\}_{x\in X}$, via the formula $$\cL(f)(x):=\int f(y)\,d\alpha_x(y).$$ Consider, also, the map $\pi$ on $C(X)$ given by composition with $\sigma$. Then since $\sigma$ is surjective, $\pi$ is a \emph{unital, injective, $C^*$-algebra endomorphism} of $C(X)$. 

Broadly speaking, potential theory is the study of how these two \emph{positive linear} maps on $C(X)$ interact. We require several features of this interaction. First, observe that we may consider $C(X)$ as a module over itself in two different ways. On the one hand there is the \emph{regular module}, $C(X)_{C(X)}$, in which $C(X)$ acts on itself via the usual pointwise multiplication. Denote this multiplication simply by a \verb|\cdot|, $\cdot$.  On the other, there is the \emph{$\pi$-induced module}, $C(X)_{\pi(C(X))}$, whose multiplication, $\ast$, is defined through composition with $\pi$: $f\ast a:= f\cdot \pi(a)$, $f,a\in C(X)$. 
A linear map on $C(X)$, $\cL$, is called a \emph{module map} in case $\cL$ intertwines these two module structures.

The following proposition may well be known, but we believe that it may not be sufficiently well known for our purposes.  Since it is an essential part of our analysis, we supply a proof.
    
\begin{prop}\label{prop:potential}
    Every positive module map $\cL:C(X)\to C(X)$ is determined by a unique non-negative continuous function $\psi$ on $X$ through the formula
    \begin{equation}\label{eq:potential}
        \cL(a)(x) = \sum_{\sigma(y)=x}\psi(y)a(y),\qquad a\in C(X).
    \end{equation}
\end{prop}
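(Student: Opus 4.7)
The plan is to exploit the combined weight of three ingredients: Riesz representation (which turns $\cL$ into a continuous family of measures), the module property (which constrains the support of these measures), and the fact that $\sigma$ is a local homeomorphism on a compact space (which makes the fibres of $\sigma$ finite and gives us local sections to transport continuity along).

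First I would appeal to the Riesz representation theorem in exactly the manner already used in Remark \ref{rem:RevAltWay}: since $\cL$ is positive and continuous, there is a unique continuous family $\{\alpha_x\}_{x\in X}$ of positive Radon measures on $X$ with $\cL(f)(x)=\int f\,d\alpha_x$. The next step is to translate the module identity $\cL(f\cdot \pi(a))=\cL(f)\cdot a$ into the measure-theoretic assertion
\begin{equation}
\int_X f(y)\bigl(a(\sigma(y))-a(x)\bigr)\,d\alpha_x(y)=0,\qquad f,a\in C(X),\ x\in X.
\end{equation}
Fixing $x$ and letting $f$ range over $C(X)$ shows that $(a\circ\sigma)(y)=a(x)$ for $\alpha_x$-a.e.\ $y$, for every $a\in C(X)$. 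Since $C(X)$ separates points of the compact Hausdorff space $X$, this forces $\supp\alpha_x\subseteq \sigma^{-1}(x)$. Because $\sigma$ is a local homeomorphism and $X$ is compact Hausdorff, $\sigma^{-1}(x)$ is discrete and closed, hence finite, so $\alpha_x=\sum_{\sigma(y)=x}\psi(y)\,\delta_y$ for uniquely determined nonnegative numbers $\psi(y)$, $y\in\sigma^{-1}(x)$. As $x$ varies this defines $\psi:X\to[0,\infty)$ pointwise, and the stated formula \eqref{eq:potential} follows immediately.

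The only remaining and slightly delicate step is continuity of $\psi$, and this is where I would spend care. Fix $y_{0}\in X$, set $x_{0}=\sigma(y_{0})$, and choose open neighborhoods $V$ of $y_{0}$ and $U$ of $x_{0}$ such that $\sigma|_{V}:V\to U$ is a homeomorphism with inverse $\tau:U\to V$; shrinking $U$, we may assume that the other (finitely many) preimages of $x_{0}$ lie outside $\overline{V}$. Using compactness and the local homeomorphism property once more, we may shrink $U$ further so that $\sigma^{-1}(U)$ decomposes as the disjoint union of $V$ and open sets missing $y_{0}$. Pick $f\in C(X)$ with $f\equiv 1$ on a neighborhood of $y_{0}$ and $\supp f\subset V$. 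Then for $x\in U$,
\begin{equation}
\cL(f)(x)=\sum_{\sigma(y)=x}\psi(y)f(y)=\psi(\tau(x)),
\end{equation}
because $f$ vanishes at all preimages of $x$ other than $\tau(x)$, where it equals $1$. Since $\cL(f)\in C(X)$ and $\tau$ is a homeomorphism, $\psi=\cL(f)\circ\sigma|_{V}$ on a neighborhood of $y_{0}$; hence $\psi$ is continuous at $y_{0}$.

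Uniqueness of $\psi$ is then a one-liner: if $\psi$ and $\psi'$ both satisfy \eqref{eq:potential}, then for any $y_{0}\in X$ the test function constructed above separates $y_{0}$ from the other preimages of $\sigma(y_{0})$, so $\psi(y_{0})=\cL(f)(\sigma(y_{0}))=\psi'(y_{0})$. The main obstacle I anticipate is purely bookkeeping around the finite-fibre structure in the continuity argument; once the preimages of $x_{0}$ are separated by disjoint open neighborhoods the argument is straightforward, but care is needed to ensure that the chosen $f$ really annihilates the stray preimages of all $x$ in a whole neighborhood of $x_{0}$, not merely at $x_{0}$ itself.
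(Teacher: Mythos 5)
Your argument is correct and follows essentially the same route as the paper's proof: Riesz representation to get the family $\{\alpha_x\}$, the module identity to force $\supp\alpha_x\subseteq\sigma^{-1}(x)$ (you do this directly via the vanishing measure $(a\circ\sigma-a(x))\,\alpha_x$ where the paper argues by contradiction with a bump function), finiteness of the fibres, and local sections plus bump functions supported in a single sheet to get continuity of $\psi$. The only quibble is that your displayed identity $\cL(f)(x)=\psi(\tau(x))$ holds only for $x$ in $\sigma(W)$, where $W$ is the neighborhood of $y_0$ on which $f\equiv 1$, rather than for all $x\in U$; since you only use it on a neighborhood of $y_0$, this is harmless.
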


\begin{proof}
    Let 
    $\alpha=\{\alpha_x\}_{x\in X}$ 
    be the continuous family of positive Borel measures that defines $\cL$. 
    Observe first that 
    $\supp \alpha_x\subseteq \sigma^{-1}(x)$. 
    Indeed, assume to the contrary that there are 
    $x,z\in X$ such that 
    $z\in \supp \alpha_x$ and 
    $\sigma(z)\ne x$. 
    We can then find an open set $U_z$ such that $\sigma|_{U_z}$ is a 
    homeomorphism onto 
    $\sigma(U_z)$ and 
    $x\notin \sigma(U_z)$. 
    Let $a\in C(X)$ be such that 
    $a(\sigma(z))=1$ and $a(y)=0$
    if $y\notin \sigma(U_z)$. 
    It follows that $\cL(\pi(a))(x)=\cL(a\circ \sigma )(x)>0$ since 
    $z\in \supp \alpha_x$. On the other hand,
    \[
    \cL(\pi(a))(x)=\cL(1\cdot \pi(a))(x)=(\cL(1)\cdot a)(x)=\cL(1)(x)a(x)=0.
    \]
    This is a contradiction. 
    Hence $\supp \alpha_x\subseteq \sigma^{-1}(x)$ for all $x\in X$. 

    Observe next that since $X$ is compact, $\sigma^{-1}(x)$ is a finite set for each $x\in X$. 
    Consequently, there is a non-negative function $\Delta$ on 
    $X\times X$ such that
    \begin{equation}\label{eq:cL_Delta}
    \cL(f)(x)=\int_X f(y)\,d\alpha_x(y)=\sum_{\sigma(y)=x}f(y)\Delta(x,y).
    \end{equation}
    To prove that $\Delta$ is continuous, let $U$ be an open subset of $X$ on which $\sigma$ is a homeomorphism
    and let $V$ be an open subset of $U$ such that $\overline{V}\subsetneq U$. 
    Let $f\in C(X)$ such that $f\equiv 1$
    on $\overline{V}$ and $f\equiv 0$ on the complement of $U$. 
    Then $\cL(f)\in C(X)$ and \eqref{eq:cL_Delta} yields
    \[
    \cL(f)(x)=\Delta(x,y) \,\,\text{ if }\,x\in \sigma(V)\,\text{ and }y=\sigma^{-1}(x)\in V.
    \]
    Therefore $\Delta$ is continuous on open sets of the form $\sigma(V)\times V$.
    Since the cograph of $\sigma$ is 
    covered by such sets, $\Delta$ is supported and continuous on the cograph of $\sigma$. Therefore the function
    $\psi$ defined via the equation $\psi(y):=\Delta(\sigma(y),y)$ is continuous and non-negative. 
    Moreover, \eqref{eq:cL_Delta} implies that
    \[
    \cL(f)(x)=\sum_{\sigma(y)=x}f(y)\Delta(x,y)=\sum_{\sigma(y)=x}f(y)\Delta(\sigma(y),y)=\sum_{\sigma(y)=x}\psi(y)f(y).
    \]
    The converse assertion and the uniqueness of $\psi$ are immediate.
    \end{proof}

    \begin{remark}\label{rem:transferoppotent}
    In the statistical mechanics literature (cf. \cite{bal:transfer, rue:emia78}) an operator of the form defined in \eqref{eq:potential} is called a \emph{transfer operator} (associated with the dynamical system defined by $\sigma$) and $\psi$ is referred to as its \emph{potential}. We shall adopt that terminology here. However, because of certain conventions in the groupoid theory literature, we will have occasions to use capital $D$'s as well as $\psi$'s to denote potentials. 
    
    Observe that $\cL(1)(x)=\sum_{\sigma(y)=x}\psi(y)=\alpha_x(X)$. So, $\{\alpha_x\}_{x\in X}$ is a proper system of measures on $X$ in the sense of Definition \ref{def:systsmaps} if and only if $\cL(1)$ is a strictly positive function. We shall therefore say \emph{$\cL$ is proper} when $\cL(1)(x)>0$ for all $x\in X$.  
    \end{remark}

\begin{prop}\label{prop:normalizedpotent}Let $\cL_{\psi}$ be a transfer operator with potential $\psi$ associated to the local homeomorphism $\sigma$ and let $\pi$ be the endomorphism of $C(X)$ determined by $\sigma$. Then the following assertions are equivalent.
\begin{enumerate}
    \item $\cL_{\psi}(1)=1$, i.e., $\cL_{\psi}$ is a unital map.
    \item $\cL_{\psi}$ is a left inverse for $\pi$.
    \item $\pi\circ\cL_{\psi}$ is a conditional expectation onto the range of $\pi$\footnote{If $A$ is a $C^*$-algebra and $B$ is a $C^*$-subalgebra of $A$, then a \emph{conditional expectation} from $A$ onto $B$ is a positive linear map on $A$ that restricts to the identity map on $B$.}. 
\end{enumerate}
\end{prop}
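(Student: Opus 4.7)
The plan is to establish the implications in the cyclic order $(1)\Rightarrow (2)\Rightarrow (3)\Rightarrow (1)$, leaning throughout on the module property of $\cL_{\psi}$ recorded in paragraph \ref{p:DefPots}, namely that $\cL_{\psi}(f\cdot \pi(a))=\cL_{\psi}(f)\cdot a$ for all $f,a\in C(X)$, together with the injectivity of $\pi$ (which is immediate since $\sigma$ is surjective, so $a\circ \sigma=0$ forces $a=0$).

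First I would prove $(1)\Rightarrow (2)$. Assuming $\cL_{\psi}(1)=1$, the module property gives
\[
\cL_{\psi}(\pi(a))=\cL_{\psi}(1\cdot \pi(a))=\cL_{\psi}(1)\cdot a=a
\]
for every $a\in C(X)$, so $\cL_{\psi}\circ \pi=\mathrm{id}_{C(X)}$. For $(2)\Rightarrow (3)$, I would observe that $\pi\circ \cL_{\psi}$ is a composition of positive maps and hence positive; its range lies in $\pi(C(X))$; and, if $b=\pi(a)$ is an arbitrary element of $\pi(C(X))$, then
\[
(\pi\circ \cL_{\psi})(b)=\pi(\cL_{\psi}(\pi(a)))=\pi(a)=b
\]
by (2). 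Thus $\pi\circ \cL_{\psi}$ restricts to the identity on $\pi(C(X))$ and, consequently, is a (positive, idempotent) conditional expectation of $C(X)$ onto $\pi(C(X))$ in the sense of the footnote.

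For $(3)\Rightarrow (1)$, if $\pi\circ \cL_{\psi}$ fixes every element of $\pi(C(X))$, then in particular it fixes $\pi(1)=1$; that is, $\pi(\cL_{\psi}(1))=1=\pi(1)$, and the injectivity of $\pi$ forces $\cL_{\psi}(1)=1$. This closes the cycle and completes the proof.

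There is essentially no technical obstacle here; the only subtlety to flag is that the module property quoted above, not the raw formula \eqref{eq:potential}, is what drives the equivalences, and the injectivity of $\pi$ is what is needed to pass from an identity in $\pi(C(X))$ back to one in $C(X)$. A brief remark after the proof can note that, in the language of Remark \ref{rem:transferoppotent}, the equivalent conditions translate to $\sum_{\sigma(y)=x}\psi(y)=1$ for all $x\in X$, so unital potentials are precisely those that split the endomorphism $\pi$.
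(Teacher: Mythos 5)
Your proof is correct and follows essentially the same route as the paper's: a cycle of implications $(1)\Rightarrow(2)\Rightarrow(3)\Rightarrow(1)$ resting on the bimodule identity $\cL_{\psi}(f\cdot\pi(a))=\cL_{\psi}(f)\cdot a$ (equivalently the explicit sum formula, which the paper uses directly) and the injectivity of $\pi$. The only cosmetic difference is in $(3)\Rightarrow(1)$, where you evaluate the conditional expectation at $1=\pi(1)$ directly, while the paper first recovers $\cL_{\psi}\circ\pi=\iota$ and then specializes; both are fine.
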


If $\cL_{\psi}$ satisfies any of these three equivalent conditions, we call $\psi$ a \emph{normalized} potential.

\begin{proof}
    If $\cL_{\psi}(1)=1$, then for all $f\in C(X)$ we have 
    \begin{multline}
(\cL_{\psi}\circ \pi)(f)(x)= \sum_{\sigma(y)=x} \psi(y)\pi(f)(y) = \sum_{\sigma(y)=x}\psi(y)f(\sigma(y))\\ = \sum_{\sigma(y) = x}\psi(y)f(x) = f(x)(\sum_{\sigma(y)=x}\psi(y)) = f(x),
    \end{multline}
so $\cL_{\psi}$ is a left inverse of $\pi$. 
Now, write $\iota$ for the identity map on $C(X)$. 
Then the assertion that $\cL_{\psi}$ is a left inverse of $\pi$ is equivalent to the equation $\cL_{\psi}\circ \pi = \iota$. 
In this event, $(\pi\circ \cL_{\psi})\circ (\pi\circ \cL_{\psi}) = \pi\circ (\cL_{\psi}\circ \pi)\circ \cL_{\psi} = \pi \circ \iota \circ \cL_{\psi}= \pi \circ \cL_{\psi}$. 
So $\pi\circ \cL_\psi$ a positive idempotent map on $C(X)$ such that $(\pi\circ \cL_\psi)\circ \pi=\pi\circ (\cL_\psi\circ \pi)=\pi$.
Thus $\pi\circ \cL_{\psi}$ is an idempotent map that acts like the identity on the range of $\pi$, i.e., $\pi\circ \cL_{\psi}$ is a conditional expectation onto the range of $\pi$. Finally, if $\pi\circ \cL_\psi$ is a conditional expectation onto the range of $\pi$, $(\pi\circ \cL_{\psi}) \circ \pi = \pi$. But then, $\pi\circ (\cL_{\psi}\circ \pi)=\pi$. Since $\pi$ is injective, $\cL_{\psi}\circ \pi = \iota$, i.e., $\cL_{\psi}$ is a left inverse of $\pi$. However, this manifestly implies $\cL_{\psi}(1)=1.$
\end{proof}

\begin{defn}
    A potential is called \emph{full} in case it is nonvanishing.
\end{defn}

The following proposition is part of Proposition 3.1 in Jean Renault's \cite{Ren_ETDN_05}.

\begin{prop}\label{defpi}
    Every full normalized potential $D$ satisfies an equation of the form 
    \begin{equation}
        D=D_0\cdot b
    \end{equation}
    where $b$ is a nonvanishing function in $\pi(C(X))$.
\end{prop}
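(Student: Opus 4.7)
The plan is to let $b := D/D_0$ and verify it meets the three requirements: it is a well-defined continuous function, it is nonvanishing, and it lies in $\pi(C(X))$.

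Continuity and nonvanishing are straightforward. Because $\sigma$ is a surjective local homeomorphism of the compact Hausdorff space $X$, the cardinality function $x \mapsto |\sigma^{-1}(\sigma(x))|$ is integer-valued and locally constant, hence continuous and bounded, so $D_0$ is continuous and strictly positive. Since $D$ is continuous and nonvanishing by the fullness hypothesis, $b = D/D_0$ is continuous and nonvanishing, and the identity $D = D_0 \cdot b$ is tautological.

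The substantive step is to show $b \in \pi(C(X))$, i.e., $b$ factors as $c \circ \sigma$ for some $c \in C(X)$. The natural tool is the combination of Proposition \ref{prop:potential} (uniqueness of the potential representing a transfer operator) and Proposition \ref{prop:normalizedpotent} (normalized potentials correspond to left inverses of $\pi$). Concretely, if we can produce $c \in C(X)$ such that
\[
\cL_D(f)(x) \;=\; c(x)\,\cL_{D_0}(f)(x) \qquad (f \in C(X),\ x\in X),
\]
then unpacking both sides with the formula \eqref{eq:potential} and invoking uniqueness of the potential forces $D(y) = c(\sigma(y))\,D_0(y)$, which gives exactly $b = c \circ \sigma \in \pi(C(X))$. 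The normalizations $\cL_D(1) = 1 = \cL_{D_0}(1)$ then pin down $c$ (in particular $c(x)=\cL_D(1)(x)/\cL_{D_0}(1)(x)$ on $X$).

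The main obstacle is producing the scalar $c(x)$ from the raw data of a full normalized $D$: the module-map identity $\cL_D(\pi(a)\cdot f)(x) = a(x)\cL_D(f)(x)$ from Proposition \ref{prop:potential} is the key structural fact that should allow one to extract a well-defined $c$, and I expect the argument to hinge on applying it to carefully chosen test functions supported on small neighborhoods where $\sigma$ is a homeomorphism onto its image. This rigidity—that the quotient $D/D_0$ of two normalized potentials is forced to descend through $\sigma$—is the real content of the proposition, and its proof is expected to mirror the corresponding step in Proposition 3.1 of \cite{Ren_ETDN_05}.
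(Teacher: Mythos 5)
The paper offers no proof of this proposition: it is attributed to Proposition 3.1 of \cite{Ren_ETDN_05} and stated without argument, so there is no internal proof to compare yours against. Your opening reduction is correct and is the right first move: since $D_0$ is nonvanishing, $b$ is forced to equal $D/D_0$, and the whole content is the claim $D/D_0\in\pi(C(X))$. But that is precisely the step you leave as an ``expectation,'' and it is not a gap that can be filled, because the claim is false for a general full normalized $D$. Your own normalization remark already exposes this: if $D=D_0\cdot(a\circ\sigma)$ with $a\in C(X)$, then summing over a fibre of $\sigma$ gives
\begin{equation}
1=\sum_{\sigma(y)=x}D(y)=a(x)\sum_{\sigma(y)=x}D_0(y)=a(x),
\end{equation}
so $a\equiv 1$ and $D=D_0$. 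Equivalently, in your scheme $c=\cL_D(1)/\cL_{D_0}(1)\equiv 1$, so the identity $\cL_D=c\,\cL_{D_0}$ you hope to extract would force $\cL_D=\cL_{D_0}$. Since full normalized potentials other than $D_0$ abound (for $\sigma(z)=z^2$ on $\bbT$ take $D(e^{2\pi it})=\tfrac12(1+\varepsilon\sin(2\pi t))$ with small $\varepsilon>0$; then $D/D_0=1+\varepsilon\sin(2\pi t)$ is not invariant under $z\mapsto -z$, hence not in $\pi(C(X))$), no proof of the statement as literally written can succeed.

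What your computation really shows is that the proposition must be a misquotation of Renault, and the honest conclusion of your attempt should be to flag that rather than to gesture at test functions. The correct nearby statement --- the one the paper actually uses, cf.\ Remark \ref{rem:fullnonfull} --- concerns a general full, not necessarily normalized, potential $\psi$: setting $D:=\psi/(\cL_\psi(1)\circ\sigma)$ one has $\psi=D\cdot b$ with $b=\cL_\psi(1)\circ\sigma$ a nonvanishing element of $\pi(C(X))$ by construction; i.e.\ every full potential differs from a normalized one by a nonvanishing factor in $\pi(C(X))$, so the two determine cohomologous Radon--Nikodym cocycles. For that version your strategy (divide by the normalizing factor and observe where it lives) works immediately and needs none of the machinery of Propositions \ref{prop:potential} and \ref{prop:normalizedpotent}.
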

\begin{remark}\label{rem:fullnonfull}
    We want to emphasize that non-full potentials are central to our study. These are the source for our ``secret sauce''. For our purposes every potential may be taken to be normalized: For if $\psi$ is a potential and if $$D:=\psi/\cL_{\psi}(1)\circ\sigma,$$ then $D$ is a normalized potential that vanishes precisely at the same points as $\psi$. Full normalized potentials may almost always be taken to be $D_0$. Sections  \ref{sec:pot,filt,scalfns} and \ref{sec:toHilbertspaces} are devoted to constructing non-full normalized potentials from full non-normalized potentials.  This will be how we build \emph{scaling functions} and associated \emph{mother wavelets}, i.e., \emph{quadrature mirror filters}.
\end{remark}

\subp \label{subp:AddRelRandL} We want to draw attention, also, to some additional relations that hold between $\pi$ and one of its left inverses $\cL$. First, recall that by Proposition \ref{prop:normalizedpotent}, $\pi\circ \cL$ is a conditional expectation of C(X) onto the range of $\pi$ and every conditional expectation of $C(X)$ onto the range of $\pi$ is of this form for a suitable $\cL$. So, $\pi^n\circ \cL^n:= \projE_n$ is a conditional expectation onto the range of $\pi^n$ for every non-negative integer $n$. Further, still, we have the relations
\begin{equation}
\projE_n\projE_m = \projE_m\projE_n = \projE_n,\qquad m\leq n.
\end{equation}
That is, the sequence $\{\projE_n\}_{n=0}^{\infty}$ is a decreasing sequence of projections \emph{on} the $C^*$-algebra $C(X)$. We want to understand their relationship with the representation theory of $C_c(G(X,\sigma),\lambda)$ that we develop later in this paper. 

Observe that \emph{every} normalized potential, full or not, leads to an isometry in the convolution algebra of $G(X,\sigma)$. 
In fact, one may do a bit better. If $\fu$ is a continuous function on $X$ such that $|\fu|^2$ is a normalized potential $D$, and if
\begin{equation}\label{eq:defSu}
    S_{\fu}(x,k-l,y):= \fu(x)1_{R_{1,0}}(x,k-l,y),
\end{equation}
then $S_{\fu}$ is an isometry in the convolution algebra, $C_c(G(X,\sigma),\lambda)$:

\begin{multline}\label{eq:S_fu}
        S_\fu^**S_\fu(x,k-l,y)=\sum_{(x,m-n,z)\in G}S_\fu^*(x,m-n,z)S_\fu(z,n+k-(l+m),y)\\
        =\sum_{(x,m-n,z)\in G}\overline{S_\fu(z,n-m,x)}S_\fu(z,n+k-(l+m),y)\\
        =\begin{cases}
            \sum_{\sigma(z)=x}\overline{\fu(z)}\fu(z) & \text{ if }x=y\\
            0 & \text{ otherwise }
        \end{cases}\\
        =\begin{cases}
            \sum_{\sigma(z)=x}D(z) & \text{ if }x=y\\
            0 & \text{ otherwise }
        \end{cases}
        =1_{R_{0,0}}(x,k-l,y).
    \end{multline}
Again, this computation is easy by virtue of the fact that $R_{1,0}$ is essentially the graph of $\sigma$.

From the computation, the \emph{covariance relation} 
    \begin{equation}
    \label{eq:covar-Su}
        S_\fu*f=(f\circ \sigma)*S_\fu=\pi(f)\ast S_\fu
    \end{equation}
    is revealed; it holds for all $f\in C(X)$. Multiplying this equation on the left by $S_{\fu}^*$ and focusing on the fact that composition with $\sigma$ is the endomorphism $\pi$, we find that $S_{\fu}$ implements the left inverse, $\cL_D$, of $\pi$ in $C_c(G(X,\sigma),\lambda)$: i.e.
    \begin{equation}\label{eq:LDcov}
        f = S_{\fu}^* \ast (f\circ \sigma)\ast S_{\fu}= S_{\fu}^* \ast \pi(f) \ast S_{\fu} = \cL_D\circ \pi (f), 
    \end{equation}
for $f\in C(X), |\fu|^2=D$. Moreover, a similar easy computation shows that $S_\fu^*$ implements $\cL_D$ from inside  $C_c(G(X,\sigma),\lambda)$, that is,
\begin{equation}
    \label{eq:L_D_S_u}
    \cL_D(f)=S_\fu^**f*S_\fu
\end{equation}
for all $f\in C(X)$.
And, in general, $\cL^n_D(f) = (S^*_\fu)^n\ast f \ast (S_\fu)^n$.

\p\label{par:S_u_HC}
It will be convenient to regard the isometry $S_\fu$ as an element of $\cL(\cX_0)$, where $\cX_0$ is the $C^*$-correspondence defined 
by the equivalence $(G,\alpha)$ between $(G,\lambda)$ and $(G,\lambda)$ as described in \ref{subp:Equivalence_as_corresp}. There, the operator $S_\fu\in \cL(\cX_0)$ is
defined via
\begin{equation}
    \label{eq:S_fu_HC}
    S_\fu\xi(x,t,y)=\fu(x)\xi(\sigma(x),t-1,y)
\end{equation}
It is an isometry because $D$ is a unital potential and so
\begin{multline*}
    \langle S_\fu\xi\,,\,S_\fu\eta\rangle_{C_c(G)}(x,t,y)=\sum_{(z,s,x)\in G}\overline{S_\fu\xi(z,s,x)}S_\fu\eta(z,s+t,y)\\
    =\sum_{(z,s,x)\in G}\overline{\fu(z)}\overline{\xi(\sigma(z),s-1,x)}\fu(z)\eta(\sigma(z),s+t-1,y)\\
    =\sum_{(u,s,x)\in G}\sum_{\sigma(z)=u}D(z) \overline{\xi(u,s,x)}\eta(u,s+t,y)=\langle \xi\,,\,\eta\rangle_{C_c(G)}(x,t,y),
\end{multline*}
for all $\xi,\eta\in C_c(Z)$.

\subp It may seem that we have introduced confusion by using ``$S_\fu$'' to denote both the isometry from \eqref{eq:S_fu} in $C^*(G,\lambda)$ and
the isometry from \eqref{eq:S_fu_HC} on $\cL(\cX_0)$. However, this ambiguity should cause no significant problem. The two isometries are the same, up to a unitary equivalence between the two Hilbert modules involved: Since $(G,\alpha)$ is 
a $(G,\lambda)$-$(G,\lambda)$ equivalence, $C^*(G,\lambda)$ acts on the left on $\cL(\cX_0)$ and it can be identified with the compact operators $\cK(\cX_0)$ generated 
by the rank-one operators $\Theta_{\xi,\eta}$, where $\Theta_{\xi,\eta}\zeta=\xi\cdot\langle \eta,\zeta\rangle_{C^*(G,\lambda)}$ for all $\xi,\eta,\zeta\in \cX_0$. The isomorphism 
between $C^*(G,\lambda)$ and $\cK(\cX_0)$ is given by $\Theta_{\xi,\eta}\mapsto _{C^*(G)}\langle \xi\,,\,\eta\rangle$ (see, for example,  the proof of
Proposition 3.8 of \cite{rw:morita}). The isometry $S_\fu$ defined in \eqref{eq:S_fu_HC} belongs to $\cK(\cX_0)$ since $S_\fu=\Theta_{\xi_0,\eta_0}$, where $\xi_0=1_{R_{0,0}}$ 
and 
\[
\eta_0(x,k-l,y)=\begin{cases}
\fu(x) & \text{ if }\sigma(y)=x \text{ and }k-l=-1\\
0 & \text{ otherwise}.
\end{cases}
\]
It is easy to check that $_{C^*(G,\lambda)}\langle \theta_0\,,\,\eta_0\rangle$ equals $S_\fu$ defined via \eqref{eq:S_fu}.

\section{Pullbacks by Projective Limits}\label{sec:PBsIRs}

\p In nearly every operator-theoretic approach to wavelets that we have encountered, there are isometries like our $S_\fu$'s and efforts are made to understand unitary extensions of them.  We want to do that here as well. However, we also want to emphasize that the extensions we produce do not live in $C^*(G(X,\sigma))$, but in larger $C^*$-algebras that we will reveal as certain pullbacks. Their description requires several ingredients.

\p\label{subp:pullback} The first ingredient is a pullback of $G(X,\sigma)$ in the sense of Proposition \ref{lem:Zgisomorphicto} that is determined by a pair consisting of a space $X_{\infty}$ and map $p:X_{\infty}\to X$. The space $X_{\infty}$ is the projective limit space induced by $\sigma$: \[X_{\infty}:=\{(x_0,x_1,x_2,\cdots)\in X^{\infty}\mid x_i=\sigma(x_{i+1})\}.\]
Here, of course, $X^{\infty}$ is the infinite product of copies of $X$ indexed by $\bbN$.  Since $X$ is compact and $\sigma$ is continuous and surjective, $X_{\infty}$ is a non-empty closed, and therefore compact, subset of $X^{\infty}$. We write its elements as $\und{x}:=(x_0,x_1,\cdots)$. The map $p$ is simply the projection onto the first coordinate: $p(\und{x}):=x_0$. Since $\sigma$ is open, $p$ is also open, as well as continuous, and it is surjective because $\sigma$ is surjective.

\begin{defn}\label{def:pullback}
    The \emph{pullback of $G(X,\sigma)$ by $X_{\infty}$ and $p$} 
    is 
\begin{multline}\label{eq:defGinf}G_{\infty}(X,\sigma):= X_{\infty}\ast G(X,\sigma) \ast X_{\infty}:\\ = \{(\und{x},(x_0,k-l,y_0),\und{y})\in X_{\infty}\times G(X,\sigma)\times X_{\infty} \mid p(\und{x})=x_0, p(\und{y})=y_0\}\\=\{(\und{x},k-l,\und{y})\mid (p(\und{x}),k-l,p(\und{y}))\in G(X,\sigma)\}.\end{multline}
    The product $(\und{x},k-l,\und{y})(\und{w},m-n,\und{z})$ is defined only when $\und{y}=\und{w}$ and then 
$$(\und{x},k-l,\und{y})(\und{y},m-n,\und{z}):=(\und{x},(k+m)-(l+n),\und{z}).$$
The inverse of $(\und{x},k-l,\und{y})$ is $(\und{y},l-k,\und{x})$.
\end{defn}

We shall use evident variations on the notation used to describe features of $G_{\infty}(X,\sigma)$ as our analysis progresses.

By Proposition \ref{lem:Zgisomorphicto}, $G_{\infty}(X,\sigma)$ is a locally compact Hausdorff groupoid with unit space $X_{\infty}$.  
The range and source maps are the obvious projections that are, however, not local homeomorphisms. Consequently, $G_{\infty}(X,\sigma)$ is not \'{e}tale. Further, the analogue of the formula \eqref{eq:mult} in which $x$, $y$, and $z$ are replaced by $\und{x}$, $\und{y}$, and $\und{z}$ involves uncountably infinite sums and so does not make sense. Thus, at first glance, it may seem that $G_{\infty}(X,\sigma)$ is of no relevant use.

\subp Indeed, while the groupoid has appeared in the literature (see, for example, \cite{Dea_TAMS95,Arzumanian-Renault} and their many citations), as far as we know, until now it has been overlooked entirely as a possible vessel for wavelets. For our part, when we began our research leading to \cite{im2008} and for many years thereafter, we were of the opinion that the natural and ``correct'' groupoid to study along with the Deaconu-Renault groupoid $G(X,\sigma)$ is the Deaconu-Renault groupoid $G(X_{\infty},\sigma_{\infty})$, where $\sigma_{\infty}$ is the invertible ``extension'' of $\sigma$ that is defined by the formula:
\begin{equation}
    \sigma_{\infty}(\und{x}):= (\sigma(x_0),x_0,x_1\cdots),\qquad \und{x}= (x_0,x_1\cdots)\in X_{\infty}.
\end{equation}\label{eq:siginf}
The inverse of $\sigma_{\infty}$ is given by the equation
\begin{equation}
\sigma_{\infty}^{-1}(x_0,x_1,x_2\cdots):= (x_1,x_2\cdots).
\end{equation}
The map $\sigma_{\infty}$ is not really an \emph{extension} of $\sigma$, but it does satisfy the important ``intertwining'' equation
\begin{equation}\label{eq:intertwine-p}
   \sigma\circ p= p\circ \sigma_{\infty}.
\end{equation}

The point is: the Deaconu-Renault groupoid for $\sigma_{\infty}$ acting on $X_{\infty}$, $G(X_{\infty},\sigma_{\infty})$, \emph{is} \'{e}tale. 
Moreover, it is isomorphic to the much-studied transformation group groupoid determined by $\sigma_{\infty}$. 
So we and others were drawn to it as a natural place to investigate wavelets and other fractal-like phenomena. 
Unfortunately, as we discovered from trying to extend our analysis in \cite{im2008}, it is impossible to track the relations $R_{n,m}$ (defined in \ref{par:Rnm}) \emph{from inside} the groupoid $G(X_{\infty},\sigma_{\infty})$. 
Rather, as we shall show, our analysis of $G_{\infty}(X,\sigma)$ reveals a more nuanced relation between the dynamical system $(X_{\infty},\sigma_{\infty})$  and a \emph{family} of  $C^*$-algebras that are based on  $G_{\infty}(X,\sigma)$ and the \emph{adjoining functions} of Holkar \cite[Definition 2.1]{hol:JOT17_2}.

\p \label{def:WeakHaarSystems}Using a normalized full potential $D$ on $X$ one can construct a Haar system on $G_{\infty}(X,\sigma)$, indexed by $X_{\infty}$, that is denoted $\nu^D*\lambda:= \{(\nu^D*\lambda)^{\und{x}}\}_{\und{x}\in X_{\infty}}$. The symbol $\nu^D$ refers to a system of measures $\{\nu^D_{x_0}\}_{x_0\in X}$ on $X_{\infty}$ that is parameterized by $X$ and is defined as follows. Note that the construction goes through  for non-full normalized potentials $D$. 

\subp \label{subp:defnuD}Recall that the product topology on $X^{\infty}$ is such that the collection of complex-valued  functions of the form $f=f_0\cdot f_1\cdot f_2\cdot \cdots \cdot f_n$, where the $f_i$ lie in $C(X)$ and $$f(\und{x}): = f_0(x_0)\cdot f_1(x_1)\cdot f_2(x_2)\cdots f_n(x_n),\qquad \und{x}=(x_0,x_1,x_2, \cdots)\in X^{\infty},$$ generate $C(X^{\infty})$. Consequently, their restrictions to $X_{\infty}$ generate $C(X_{\infty})$. Recall, too, that Kolmogorov's extension theorem \cite[Appendix A.3]{Rosenblatt1971} guarantees that the formula
\begin{multline}\label{eq:Markov1}
\int_{X_\infty} f_{0}(x_{0})\cdot f_{1}(x_{1})\dots \cdot f_{n}(x_{n})d\nu^D_{x_{0}}(\und{x}):\\= f_{0}(x_{0})\bigl( \sum_{\sigma(x_{1})= x_{0}}D(x_{1})f_{1}(x_{1})\bigl(\sum_{\sigma(x_{2})=x_1}D(x_{2})f_{2}(x_{2})\\ \cdots
 \bigl(\sum_{\sigma(x_{n})=x_{n-1}}D(x_{n})f_{n}(x_{n})\bigr)\cdots\bigr)\bigr),
\end{multline}
which is valid for all $f=f_{0}f_{1}\cdots f_{n}$ with $f_{i}\in C(X)$, yields a well-defined probability measure  supported on $p^{-1}({x_{0}})$ that we have denoted $\nu^D_{{x_{0}}}$.

The formula \ref{eq:Markov1} becomes more manageable when one uses the transfer operator $\cL_D$ to abbreviate the sums. The result is: 

\begin{multline}\label{eq:Markov1.2}
\int_{X_\infty} f_{0}(x_{0})\cdot f_{1}(x_{1})\dots \cdot f_{n}(x_{n})d\nu^D_{x_{0}}(\und{x}):\\
 =\left(f_{0}\cL_D\bigl(f_1\cL_D\bigl(f_2 \cdots
 \cL_D\bigl(f_{n-1}\cL_D(f_n)\bigr)\cdots \bigr)\bigr)\right)(x_0),
\end{multline}
which not only makes it clear that  $\nu^D_{x_0}$ is a probability measure on $X_{\infty}$, it shows also that the family $\{\nu^D_{x_0}\}_{x_0\in X}$ may be viewed as an element of $C(X,\fM_1(X_{\infty}))$, where $\fM_1(X_{\infty})$ is the space of all probability measures on $X_{\infty}$ with the weak-$\ast$ topology, i.e., $\{\nu^D_{x_0}\}_{x_0\in X}$ is a continuous family of probability measures on $X_{\infty}$ indexed by $X$. 
Because the family $\nu^D = \{\nu^D_{x_0}\}_{x_0\in X}$ is a system of measures on $X_{\infty}$ that is fibred over $X$ by $p$ (i.e., each $\nu^D_{x_0}$ is supported on $p^{-1}(x_0), x_0\in X$), we shall refer to $\nu^D$ as the \emph{$p$-system of measures on $X_{\infty}$ determined by $D$}.

\subp Finally,  letting $\lambda:= \{\lambda^x\}_{x\in X}$ denote the counting measure Haar system on $G(X,\sigma)$ (paragraph \ref{p:defDRgpdalg}), the formula 
       \begin{multline}\label{def:nustarlambda}
  \int_{G_{\infty}(X,\sigma)}f(\und{x},(p(\und{x}),t, p(\und{y})),\und{y})\, d(\nu^D\ast \lambda)^{\und{x}}:
  \\=
  \int_{G(X,\sigma)}\int_{X_{\infty}}f(\und{x},(p(\und{x}),t,p(\und{y})), \und{y})\, d\nu^D_{p(\und{y})}(\und{y})\, d\lambda^{p(\und{x})}(p(\und{x}),t,p(\und{y})),
      \end{multline}
    where $f\in C_c(G_{\infty}(X,\sigma))$, defines the system of measures $\nu^D*\lambda: = \{(\nu^D*\lambda)^{\und{x}}\}_{\und{x}\in X_{\infty}}$ on $G_\infty(X,\sigma)$.
Close inspection reveals that \eqref{def:nustarlambda} is a special case of \eqref{eq:HaarSystYGY}, i.e., $\nu^D\ast \lambda = \beta$ specialized to $G_{\infty}(X,\sigma)$. Thus $\nu^D\ast \lambda$ is a Haar system on $G_{\infty}(X,\sigma)$  if $\nu^D$ is full. The following proposition describes when this happens.

\begin{prop}\label{prop:fullness}
    $\nu^D$ is full if and only if $D$ is nonvanshing, i.e., if and only if $D$ is a full potential.
\end{prop}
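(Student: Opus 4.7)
The plan is to unpack both directions of the equivalence using the iterated formula \eqref{eq:Markov1.2} and the fact that cylinder sets of the form $W_{\underline{y}}(U_1,\dots,U_n) := \{\underline{x}\in X_\infty : x_i\in U_i,\ i=1,\dots,n\}$ (with $U_i\ni y_i$) form a basis for the topology on $X_\infty$, hence their intersections with $p^{-1}(x_0)$ form a basis for the relative topology on each fiber.

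For the direction $D$ nonvanishing $\Rightarrow$ $\nu^D$ full, fix $x_0\in X$ and $\underline{y}\in p^{-1}(x_0)$, and let $W=W_{\underline{y}}(U_1,\dots,U_n)$ be a basic neighborhood of $\underline{y}$. Shrinking the $U_i$'s, I would arrange that each $\sigma|_{U_i}$ is a homeomorphism onto its image and that $\sigma(U_{i+1})\subset U_i$. Choose Urysohn-type functions $f_i\in C(X)$ with $0\le f_i\le 1$, $f_i(y_i)=1$, and $\operatorname{supp} f_i\subset U_i$. Then the product $f(\underline{x}):=f_1(x_1)\cdots f_n(x_n)$ is dominated by $1_W$, so $\nu^D_{x_0}(W)\ge \int f\,d\nu^D_{x_0}$. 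The right-hand side equals $\cL_D\bigl(f_1\cL_D(f_2\cdots f_{n-1}\cL_D(f_n)\cdots)\bigr)(x_0)$ by \eqref{eq:Markov1.2}. I would evaluate this from the inside out: at $y_{n-1}$, the sum defining $\cL_D(f_n)(y_{n-1})$ includes the term $D(y_n)f_n(y_n)=D(y_n)>0$; pushing one step up, $f_{n-1}(y_{n-1})\cL_D(f_n)(y_{n-1})>0$ and so $\cL_D(f_{n-1}\cL_D(f_n))(y_{n-2})\ge D(y_{n-1})f_{n-1}(y_{n-1})\cL_D(f_n)(y_{n-1})>0$, and so on. Continuing inductively gives $\nu^D_{x_0}(W)>0$, proving $\underline{y}\in\operatorname{supp}\nu^D_{x_0}$.

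For the converse, suppose $D(y_1)=0$ for some $y_1\in X$, and set $x_0:=\sigma(y_1)$. Using surjectivity of $\sigma$, choose $y_2,y_3,\dots$ with $\sigma(y_{i+1})=y_i$ so that $\underline{y}=(x_0,y_1,y_2,\dots)\in p^{-1}(x_0)$. Because $\sigma$ is a local homeomorphism and $\sigma^{-1}(x_0)$ is finite, I can select an open neighborhood $U$ of $y_1$ on which $\sigma$ is a homeomorphism and with $\sigma^{-1}(x_0)\cap U=\{y_1\}$. Pick $f_1\in C(X)$ nonnegative with $f_1(y_1)=1$ and $\operatorname{supp} f_1\subset U$, and consider $g(\underline{x}):=f_1(x_1)$ as a continuous nonnegative function on $X_\infty$. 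Formula \eqref{eq:Markov1.2} reduces to
\begin{equation*}
\int_{X_\infty} g\,d\nu^D_{x_0} = \cL_D(f_1)(x_0) = \sum_{\sigma(x_1)=x_0} D(x_1)f_1(x_1) = D(y_1)f_1(y_1) = 0,
\end{equation*}
where the middle equality uses the support condition on $f_1$ to kill every term but the $y_1$ term. Thus the open set $\{g>0\}$ contains $\underline{y}$ but has $\nu^D_{x_0}$-measure zero, so $\underline{y}\notin\operatorname{supp}\nu^D_{x_0}$ and $\nu^D$ is not full.

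The essentially-only technical point requiring care is the compatibility of the shrinking in the first direction (so the $f_i$'s pulled back from $X$ genuinely cut out a neighborhood inside $p^{-1}(x_0)$); this is a mild local-homeomorphism argument. Everything else is bookkeeping with \eqref{eq:Markov1.2}, so I do not anticipate a serious obstacle.
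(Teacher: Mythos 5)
Your proof is correct and follows essentially the same route as the paper: both directions are read off from the Kolmogorov/Markov formula \eqref{eq:Markov1.2} applied to cylinder sets, with the identical converse construction (a neighborhood of a zero of $D$ on which $\sigma$ is injective). If anything, your forward direction is more complete than the paper's, which only verifies positivity on cylinders constraining the first two coordinates, whereas your inductive evaluation of the nested transfer operators handles arbitrary basic cylinders in the fiber.
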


\begin{proof}
    Observe that equation \eqref{eq:Markov1.2} implies that $\nu^D_{x_0}((U_0\times U_1\times X\times \cdots)\cap p^{-1}(x_0))= 1_{U_0}(x_0)\times \cL_D(U_1\cap p^{-1}(x_0))$. So, if $(U_0\times U_1)\times X \times \cdots )\cap p^{-1}(x_0)$ is nonempty, then $x_0\in U_0$ and there must exist a $y\in \sigma^{-1}(x_0)\cap U_1$. It follows that $\nu^D_{x_0}(U_0\times U_1 \times X\cdots)\cap p^{-1}(x_0)\geq D(y)> 0$. Thus $\nu^D_{x_0}$ gives positive measure to each open set that meets $p^{-1}(x_0)$, which means that $\nu^D$ is full. 
    For the converse, suppose $D(y)=0$ for some $y\in \sigma^{-1}(x_0)$. Then there is an open $U_1$ such that $y\in U_1$ and $\sigma$ restricted to $U_1$ is a homeomorphism onto an open set $U_0$. Then $\nu_{x_0}^D(U_0\times U_1\times X\times X \cdots)=D(y)=0$.
\end{proof}
    \begin{remark}\label{rem:xzero}
    In the discussion so far, we have added the subscript $0$ to $x$ to  emphasize the fact that $\nu_{x_0}^D$ is supported on the zero$^{\text{th}}$ component of $X_{\infty}$.  We shall drop it from now on unless it helps materially to make the distinction between that component and $x$ as an element of  the space $X$.\end{remark}

\section{$p$-systems of measures and Conditional Expectations}\label{sec:CondExps}

\p Before describing the special features of the convolution- and $C^*$-algebra built from $G_{\infty}(X,\sigma)$ and the $p$-system of measures $\nu^D:=\{\nu^D_x\}_{x\in X}$, where $D$ is a fixed full normalized potential, we want to spend a little time to highlight features of these systems. Our discussion will expand and complement what we did in paragraph \ref{p:DefPots}.
We assume that $G_\infty(X,\sigma)$ is endowed with the Haar system 
determined by $D$ as in \eqref{def:nustarlambda}.

\subp \label{subp:nuD}First, note that the defining formula for $\nu^D$ reveals it to be a map from $C(X_{\infty})$ to $C(X)$ that is linear, positive, and unital : 
\begin{equation}\label{eq:nuDMap}
    \nu^D(f)(x): = \int_{X_{\infty}}f(\und{y})\, d\nu_x^D(\und{y}),\qquad f\in C(X_{\infty}).
\end{equation}

Most important for our purposes, \eqref{eq:nuDMap} makes it clear that $\nu^D$ is a left inverse of the map $p_*:C(X)\to C(X_{\infty})$ that is dual to $p$:
\begin{equation}
    p_*(f)(\und{x}):=f(p(\und{x})),\qquad f\in C(X).
\end{equation}
Indeed, 
\begin{multline}\label{eq:leftinvpdual}
    [(\nu^D\circ p_*)(f)](x)
    = [\nu^D(p_*(f))](x) 
    = \int_{X_\infty}p_*(f)(\und{y})\,d\nu^D_{x}(\und{y})\\ 
    = \int_{X_{\infty}}f(p(\und{y}))\, d\nu^D_{x}(\und{y})
    = \int_{X_{\infty}} f(x) \, d\nu^D_x(\und{y}) 
    = f(x).
\end{multline}
The penultimate equation reflects the fact that each $\nu_x^D$ is a probability measure on $X_{\infty}$  supported on $p^{-1}(x)$.
Consequently, also, \eqref{eq:leftinvpdual} shows that $\nu^D$ is a surjective map from $C(\Xinf)$ onto $C(X)$.

For much of our analysis, we find it preferable to write all the variables that appear in \eqref{eq:nuDMap} in terms of points in $X_{\infty}$.  
So \eqref{eq:nuDMap} may be rewritten as:
\begin{equation}\label{eq:nuDMapRw}
    \nu^D(f)(p(\und{x})): = \int_{X_{\infty}}f(\und{y})\, d\nu_{p(\und{x})}^D(\und{y}),\qquad f\in C(X_{\infty}).
\end{equation}

\subp \label{subp:cEzero}In analogy with subparagraph \ref{subp:AddRelRandL}, 
we find that $p_*\circ \nu^D$ is a conditional expectation, 
$\cE_0$, on $C(X_{\infty})$, whose definition expressed in terms of integrals using \eqref{eq:nuDMapRw} is
\begin{multline}\label{eq:E0def}
\cE_0(f)(\und{x})
= [(p_*\circ \nu^D)(f)](\und{x})
= [p_*(\nu^D(f))](\und{x})\\
= (\nu^D(f))(p(\und{x}))
= \int_{\Xinf} f(\und{y})\, d\nu^D_{p(\und{x})}(\und{y}),
\qquad f\in C(\Xinf).
\end{multline}
This equation shows that $\cE_0$ is a unital, positive, map from $C(\Xinf)$ to $C(\Xinf)$, while the fact that $\cE_0$ is idempotent is immediate from \eqref{eq:leftinvpdual}:
\begin{equation}
    \cE_0 \circ \cE_0 = (p_*\circ \nu^D)\circ (p_*\circ \nu^D) = p_*\circ (\nu^D \circ p_*) \circ \nu^D) = p_*\circ  \nu^D= \cE_0. 
\end{equation}
Finally, the fact that the range of $\cE_0$ consists of all functions in $C(\Xinf)$ that depend only on the first variable is clear from \eqref{eq:E0def}. Evidently, such functions form an algebra isomorphic to $C(X)$, and $\cE_0$  is a bimodular map over over it. Consequently, we shall informally, but unambiguously, write $\cE_0(a\cdot f \cdot b)=a\cdot \cE_0(f)\cdot b$ for all $f\in C(\Xinf)$ and $a,b\in C(X)$.

\subp \label{subp:muinfty}The following equation arises later in our analysis (see \eqref{eq:UfuNorm} and \eqref{eq:UfuinverseNorm}). We believe it will be helpful to parse it now. For $f\in C(X_\infty)$, 
\begin{equation}\label{eq:muinfty}
\sum_{\sigma(z)=x}D(z)\int_{X_\infty}f(\sigma_\infty(\und{z}))\,d\nu_{z}^D(\und{z})=\int_{X_\infty}f(\und{x})\,d\nu_{x}^D(\und{x})
\end{equation}
for all $x\in X$.  This equation may be easier to understand, if it is rewritten in terms of the maps that are implicit in it. First, let $\pi_{\infty}$ be defined by the formula
\begin{equation}
    \pi_{\infty}(f):= f\circ \sigma_{\infty}, \qquad f\in C(X_\infty).
\end{equation}
Thus $\pi_{\infty}$ is an analogue of the endomorphism $\pi$ of $C(X)$ defined in Proposition 
\ref{defpi}. 
This time, however, $\pi_{\infty}$ is an automorphism of $C(X_{\infty})$, and its inverse is given by composing with 
$\sigma_{\infty}^{-1}$, i.e., 
$\pi_{\infty}^{-1}(f)= f\circ \sigma_{\infty}^{-1}$ for all 
$f\in C(X_{\infty})$. 
So, now we can parse equation \eqref{eq:muinfty} in terms of maps:
\begin{equation}\label{eq:muinftybis}
    \cL_D\circ \nu^D\circ \pi_{\infty} = \nu^D,
\end{equation}or a bit more symmetrically, we may write
\begin{equation}\label{eq:muinftybis1}
    \cL_D\circ \nu^D = \nu^D\circ \pi_{\infty}^{-1}.
\end{equation}
Keep in mind that both sides of each of these equations are maps from $C(X_{\infty})$ to $C(X)$.

\p For our purposes here, the most important role of the $p$-system of measures $\nu^D$ 
 will be revealed in equation \eqref{def:bbE} that is based on this lemma:

\begin{lem}\label{lem:properPhi}
    The map $\Phi:G_{\infty}(X,\sigma) \to G(X,\sigma)$ defined by the equation
\begin{equation}\label{def:Phi}
    \Phi((\und{x},k-l,\und{y})):=(p(\und{x}),k-l,p(\und{y})),\qquad (\und{x},k-l,\und{y})\in G_{\infty}(X,\sigma).
\end{equation}
is a continuous, open, \emph{proper} and surjective groupoid homomorphism mapping the pullback groupoid $G_{\infty}(X,\sigma)$ onto $G(X,\sigma)$.
\end{lem}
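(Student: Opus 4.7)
The plan is to check the four topological properties plus the homomorphism property in turn. That $\Phi$ is a groupoid homomorphism is an immediate unfolding of Definitions \ref{def:D-Rgpd} and \ref{def:pullback}: a pair $(\und{x},k-l,\und{y}),(\und{y},m-n,\und{z})$ is composable in $G_{\infty}(X,\sigma)$ exactly when its image under $\Phi$ is composable in $G(X,\sigma)$, and $\Phi$ manifestly respects the multiplication and the inversion $(\und{x},k-l,\und{y})^{-1}=(\und{y},l-k,\und{x})$. Continuity is automatic because $G_{\infty}(X,\sigma)$ carries the subspace topology inherited from $X_{\infty}\times\bbZ\times X_{\infty}$ and the map $(\und{x},n,\und{y})\mapsto(p(\und{x}),n,p(\und{y}))$ is continuous. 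For surjectivity, given $(x,n,y)\in G(X,\sigma)$, choose any $\und{x}\in p^{-1}(x)$ and $\und{y}\in p^{-1}(y)$, which exist since $\sigma$ (hence $p$) is surjective; the triple $(\und{x},n,\und{y})$ lies in $G_{\infty}(X,\sigma)$ by \eqref{eq:defGinf} and maps to $(x,n,y)$.

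For openness, I would verify the claim on a basis. A basic open set in $G_{\infty}(X,\sigma)$ has the form $W=(U\times\{n\}\times V)\cap G_{\infty}(X,\sigma)$ for $U,V\subseteq X_{\infty}$ open and $n\in\bbZ$, and I claim
\[
\Phi(W)=(p(U)\times\{n\}\times p(V))\cap G(X,\sigma).
\]
The inclusion $\subseteq$ is immediate; for $\supseteq$, any $(x,n,y)$ in the right-hand side admits lifts $\und{x}\in U$ and $\und{y}\in V$ via $p$, and the triple $(\und{x},n,\und{y})$ then automatically lies in $G_{\infty}(X,\sigma)$ since its image $(x,n,y)$ is in $G(X,\sigma)$. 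Because $p$ is open, $p(U)$ and $p(V)$ are open, so $\Phi(W)$ is open.

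Properness is the only step requiring a mild idea. Let $K\subseteq G(X,\sigma)$ be compact. The projection $K\to\bbZ$ is continuous into a discrete space with compact image, hence the image is a finite set $F=\{n_{1},\dots,n_{k}\}$, so $K\subseteq X\times F\times X$. Extending $\Phi$ to the continuous map $\widetilde{\Phi}:X_{\infty}\times\bbZ\times X_{\infty}\to X\times\bbZ\times X$, $(\und{x},n,\und{y})\mapsto(p(\und{x}),n,p(\und{y}))$, one has $\widetilde{\Phi}^{-1}(K)\subseteq\widetilde{\Phi}^{-1}(G(X,\sigma))=G_{\infty}(X,\sigma)$, so $\Phi^{-1}(K)=\widetilde{\Phi}^{-1}(K)$. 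Since $K$ is compact in the Hausdorff space $X\times\bbZ\times X$ it is closed there, whence $\Phi^{-1}(K)$ is a closed subset of the compact space $X_{\infty}\times F\times X_{\infty}$ (here using that $X_{\infty}$ is compact as a closed subset of $X^{\bbN}$) and is therefore compact. The main, though still routine, obstacle is recognizing that the discreteness of $\bbZ$ confines the second coordinate of any compact set in $G(X,\sigma)$ to a finite set, after which compactness of $X_{\infty}$ does the rest.
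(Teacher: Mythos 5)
Your proof is correct, and for everything except properness it follows the same (essentially routine) route as the paper, which simply declares the homomorphism, continuity, openness and surjectivity claims to be clear; your basis computation for openness is a welcome expansion of that. The one step where you genuinely diverge is properness. The paper's argument covers a compact $K\subseteq G(X,\sigma)$ by finitely many of the compact open sets $R_{n,m}$ of \eqref{eq:Rnm} and observes that each $\Phi^{-1}(R_{n,m})$ is compact; yours instead uses the discreteness of the $\bbZ$-coordinate to confine $K$ to $X\times F\times X$ with $F$ finite and then realizes $\Phi^{-1}(K)$ as a closed subset of the compact product $X_{\infty}\times F\times X_{\infty}$. Both arguments rest on the same two pillars (finiteness in the $\bbZ$-direction and compactness of the underlying spaces), and yours is valid with the relative topology that the paper declares on $G(X,\sigma)$ as a subset of $X\times\bbZ\times X$. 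The one advantage of the paper's version is robustness: if one instead equips $G(X,\sigma)$ with the finer topology generated by the bisections $Z(U,V,k,l)$ of \ref{subp:ZUV} (the topology one actually wants for the \'etale structure), then ``closed subset of a product-compact set'' no longer immediately yields compactness in the groupoid topology, whereas the $R_{n,m}$-covering argument survives because the two topologies agree on each $R_{n,m}$ and on each $\Phi^{-1}(R_{n,m})$. To make your argument equally robust you would only need to add that, for each fixed $n\in F$, the compact set $K\cap(X\times\{n\}\times X)$ sits inside the open increasing union $\bigcup_{k-l=n}R_{k,l}$ and hence inside a single $R_{k,l}$ --- at which point you have rediscovered the paper's proof.
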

\begin{proof}
    Since $p$ is continuous and open, it is clear that $\Phi$ is also.  
    Also, it is clear that $\Phi$ is a surjective groupoid homomorphism mapping $G_{\infty}(X,\sigma)$ onto $G(X,\sigma)$. 
    Since $G_{\infty}(X,\sigma)$ and $G(X,\sigma)$ are locally compact and Hausdorff, 
    to show that $\Phi$ is proper, 
    it suffices to show that for each compact subset 
    $K\subset G(X,\sigma)$, $\Phi^{-1}(K)$ 
    is compact in $G_{\infty}(X,\sigma)$.  For this purpose, simply note that since each of the sets $R_{n,m}$ from equation \eqref{eq:Rnm} is compact and open in $G(X,\sigma)$ and since $G(X,\sigma)$ is covered by the family of them, any compact subset of $G(X,\sigma)$ is covered by a finite number of them. But also note that $\Phi^{-1}(R_{n,m})$ is clearly compact in $G_{\infty}(X,\sigma)$.  It follows that $\Phi$ is proper.
\end{proof}

\subp \label{subp:Phinulambda} As a consequence of this lemma, $\Phi_*$, defined by the equation
\begin{equation}\label{eq:Phisubstar}
\Phi_*(f):= f\circ \Phi, \qquad f\in C_c(G,\sigma),
\end{equation}
is a linear map from $C_c(G,\sigma)$ into $C_c(G_{\infty},\sigma)$ that is continuous with respect to the inductive limit topologies on each space.  It is also clear from equation \eqref{def:nustarlambda} that for all $f\in C_c(G(X,\sigma))$
\begin{multline}\label{eq:nudlambdalambda}
\int_{G_{\infty}(X,\sigma)}\Phi_*(f)(\und{x},t, \und{y})\, d(\nu^D\ast \lambda)^{\und{x}}:\\=
  \int_{G(X,\sigma)}\int_{X_{\infty}}\Phi_*(f)(\und{x},(x_0,t,y_0), \und{y})\, d\nu^D_{ y_0}(\und{y})\, d\lambda^{x_0}(x_0,t,y_0)\\=\int_{G(X,\sigma)}\int_{X_{\infty}}f(x_0,t,y_0)\, d\nu^D_{ y_0}(\und{y})\, d\lambda^{x_0}(x_0,t,y_0)\\=\int_{G(X,\sigma)}f(x_0,t,y_0)\,  d\lambda^{x_0}(x_0,t,y_0).
\end{multline}
The reason for the last equality is that once $(x_0,t,y_0)$ has been fixed, $f(x_0,t,y_0)$ may be viewed as a constant function on the fibre $p^{-1}(y_0)$ on which the measure $\nu^D_{y_0}$ is supported. Since $\nu^D_{y_0}$ is a probability measure, the inner integral evaluates to $f(x_0,t,y_0)$, which leaves the integral $\int_{G(X,\sigma)}f(x_0,t,y_0)\,  d\lambda^{x_0}(x_0,t,y_0)$ to be evaluated.

\subp \label{pMaps}A succinct way to express \eqref{eq:nudlambdalambda} is to write everything in terms of the maps determined by the systems of measures involved:
\begin{equation}\label{eq:lambdaPhilambdanu}
p_*\circ \lambda = (\nu^D*\lambda)\circ \Phi_*
\end{equation}
where  $\lambda:=\{\lambda^x\}_{x\in X}$ is viewed as a map from $C_c(G(X,\sigma))$ to $C(X)$,
$p_*:C(X)\to C(X_{\infty})$ is the map dual to $p$, $\nu^D * \lambda$ is the map determined by the 
system $\{\nu^D_{p(\und{x})}*\lambda^{p(\und{x})}\}_{\und{x}\in X_{\infty}}$ and $\Phi_*$ is the map 
dual to $\Phi$ acting on functions.

The image of $\Phi_*$ thus consists of those functions in $C_c(G_{\infty}(X,\sigma),\nu^D*\lambda)$ that depend only on the zero$^{\text{th}}$-coordinate. The  equations in  \ref{subp:Phinulambda} and \ref{pMaps} , in particular \eqref{eq:lambdaPhilambdanu}, make it clear that $\Phi_*$ defines an \emph{algebra isomorphism} from $C_c(G(X,\sigma),\lambda)$ onto its image in $C_c(G_{\infty}(X,\sigma),\nu^D\ast \lambda)$ that is a homeomorphism in the inductive limit topologies involved. 

\subp We abbreviate the image, $\Phi_*(f)$, of $f\in C_c(G,\lambda)$ under $\Phi_*$ to $\uund{f}$.

\subp\label{subp:CondExps} Now recall that $1_{R_{0,0}}$ is the identity of $C_c(G(X,\sigma),\lambda)$ (cf.  \ref{par:Rnm}). Consequently, $\uund{1_{R_{0,0}}}$ is a projection \emph{in} $C_c(G_{\infty}(X,\sigma),\nu^D\ast \lambda)$. But a projection \emph{in} a $\ast$-algebra determines a projection \emph{on} the algebra via ``cornering''. So, in our setting, $f\to \uund{1_{R_{0,0}}}\ast f \ast \uund{1_{R_{0,0}}} $ defines an idempotent, positivity-preserving, linear map, which we denote by $\bbE$, whose range is the image of $\Phi_*$. In fact, it is a straightforward calculation to see that 
\begin{multline}\label{def:bbE}
    \bbE(f)(\und{x},k-l,\und{y}):=\uund{1_{R_{0,0}}}\ast f \ast \uund{1_{R_{0,0}}}(\und{x},k-l,\und{y})\\ = \int_{X_{\infty}} \int_{X_{\infty}} f(\und{u},k-l,\und{v})\,d\nu^D_{p(\und{x})}(\und{u})\,d\nu^D_{p(\und{y})}(\und{v})
\end{multline}
for all $f\in C_c(G_{\infty}(X,\sigma),\nu^D\ast \lambda)$. This equation, in turn, coupled with \eqref{eq:E0def} justifies our writing:
\begin{equation}\label{EcErelation}
    \bbE = \cE_0\otimes \iota \otimes \cE_0,
\end{equation}
restricted to $C_c(G_{\infty}(X,\sigma))$, where we are regarding $G_{\infty}(X,\sigma)$ as a locally closed subset of $\Xinf \times \bbZ \times \Xinf$ and where $\iota$ denotes the identity map on $C_c(\bbZ)$.

It is also easy to see that $\bbE$ is bimodular in the sense that $\bbE(\uund{b_1}\ast f \ast \uund{b_2})=\uund{b_1}\ast \bbE(f)\ast \uund{b_2}$ for all $f\in C_c(G_{\infty}(X,\sigma),\nu^D\ast\lambda)$ and $b_i\in C_c(G(X,\sigma),\lambda)$, $i=1,2$.

Thus $\bbE$ has all the trappings of a conditional expectation, except one: $\bbE$ is not unital.  That is not important here. The defining equation \eqref{def:bbE} means that $\bbE$ is a ``cornering'' projection and so extends to a completely positive, contractive and idempotent map on any $C^*$-algebra that is generated by a $\ast$-representation of $C_c(\Ginf(X,\sigma),\nu^D\ast\lambda)$. 

\section{Representations of $G_{\infty}(X,\sigma)$ induced from  $G(X,\sigma)$}\label{sec:GinfInd}

 \p We fix  a full normalized potential $D$ and we consider the Haar system $\nu^D*\lambda$ on 
 $G_\infty(X,\sigma)$ determined by $D$. This Haar system on $G_\infty(X,\sigma)$ is fixed throughout the section. We will induce
 representations of $C^*(G(X,\sigma),\lambda)$ to representations of $C^*(G_\infty(X,\sigma),\nu^D*\lambda)$ using non-full normalized potential derived from $D$
 via the techniques developed in Section \ref{sec:induc-repr}. Since $X$ and $\sigma$ are fixed, we will write $G$ for $G(X,\lambda)$ and $\Ginf$ for $\Ginf(X,\sigma)$
 when the notation becomes  cluttered. 
 
 We consider  a non-negative continuous function $b:X\to\bbR_+$ such that the function
 $D_b(x):=b(x)D(x)$ is another normalized potential for $\sigma$, not necessarily full: $
 \sum_{\sigma(y)=x}D_b(y)=1$ for all $x\in X$. The potential $D_b$ defines a $p$-system of measure $\nu^{D_b}$ on 
 $X_\infty$ via \eqref{eq:Markov1} or, equivalently, \eqref{eq:Markov1.2}. Moreover, $\nu_u^{D_b}$ is 
 absolutely continuous with respect to $\nu_u^D$ since
 \begin{multline}
 \int_{X_\infty} f_0(x_0)f_1(x_1)\dots f_n(x_n)\,d\nu_{x_0}^{D_b}(\und{x}) \\
 =\int_{X_\infty} f_0(x_0)f_1(x_1)\dots f_n(x_n)\prod_{i=1}^n b(x_i)\,d\nu_{x_0}^{D}(\und{x})
 \end{multline}
 for all $f=f_0f_1\cdots f_n$ with $f_i\in C(X)$ and $n\ge 0$. We let $\bbd$ be the Radon-Nykodim derivative such that $\nu_u^{D_b}=\bbd \nu_u^D$ for all $u\in X$. In the notation of
 \eqref{eq:nu_d}, one would write $\nu_{\bbd}^{D_b}$ for $\nu^{D_b}$. We prefer the new notation since $\nu_{\bbd}^{D_b}$ is hard to read and $\bbd$ is determined by $b$.
 We define a groupoid correspondence $(Z,\alpha_b)$ from $(\Ginf(X,\sigma),\nu^D*\lambda)$ to $(G,\lambda)$ using the framework described in 
 paragraph \ref{par:topocorrespnd}, 
 where
 \begin{equation}\label{eq:Z2}
   Z: = X_{\infty}\,{_p\ast_r}\, G(X,\sigma) = \{(\und{x},(p(\und{x}),t,y))\mid \und{x}\in X_{\infty},(p(\und{x}),t,y)\in G(X,\sigma)\}, 
 \end{equation}
and $\alpha_b=\nu^{D_b}*\lambda$.

\p \label{par:GtoGinfty} Our goal now is to relate the representation theory of $G(X,\sigma)$ to the 
representation theory of $\Ginf(X,\sigma)$ using the potential $D_b$. It will be fixed for the remainder 
of this section. 

\subp \label{subp:IntroRieffelsTh} Recall that $C_c(Z)$ can be completed to a $C^*$-correspondence that 
we denote $\cX_\infty$ from $C^*(\Ginf(X,\sigma),\nu^D*\lambda)$ to $C^*(G(X,\sigma),\lambda)$ via the equations 
\eqref{eq:rightZBact}, $\eqref{eq:CcGinnerprod}$, and $\eqref{eq:leftYGYZact}$. In particular, 
$C^*(\Ginf(X,\sigma),\nu^D*\lambda)$ acts as multipliers on $\cX_\infty$ via    
\begin{multline}\label{eq:bbLformula}
    \bbL(a)\xi(\und{x},(p(\und{x}),t,y)):=a\cdot \xi(\und{x},(p(\und{x}),t,y))\\= \int_{G_{\infty}} a(\und{x}, s, \und{w})\xi(\und{w},(p(\und{w}),t-s,y))\bbd(\und{w})^{1/2}/\bbd(\und{x})^{1/2}\, d(\nu^D\ast \lambda)^{\und{x}}(\und{x},s,\und{w}).
\end{multline}

\subp Fix a representation $\hat{L}= (\mu,X\ast \cH, \hat{L})$ of the groupoid $G$ and let $\fL$ be its integrated form representing $C^*(G,\lambda)$ on $L^2(X\ast\cH,\mu)$ as described in \ref{subp:RepsofG}. We want to explain how to induce $\hat{L}$, $\fL$ and all related structure to the pullback groupoid, $\Ginf(X,\sigma)$. The relevance of the normalized potential, $D_b$, will be tracked in the process.

The induced 
Hilbert bundle determined by $\go\ast \cH = X\ast \cH$ is the bundle $X_{\infty}\ast \cH = \Ginf(X,\sigma)^{(0)}\ast \cH$ that is the pull back of $X\ast \cH$ by $p$, i.e., the fibre $H_{\und{x}}:= H_{p(\und{x})}$. This bundle is the same for all the other induced structures we consider. The isomorphism groupoid of $X_\infty\ast \cH$, $\Iso(X_{\infty}\ast \cH)$, is similarly identified:
\begin{multline}
  \Iso(X_{\infty}\ast \cH):=\{(\und{x},U,\und{y})\mid U\in B_{\Iso}(H_{\und{y}}, H_{\und{x}})\}\\= \{(\und{x},U,\und{y})\mid U\in B_{\Iso}(H_{p(\und{y})}, H_{p(\und{x})})\} 
\end{multline}

The representation $\hat{L}_{\infty}$ of $\Ginf(X,\sigma)$ on $X_{\infty}\ast \cH$ induced by $\hat{L}$ is given by the formula 
\begin{equation}
    \hat{L}_\infty(\und{x},t,\und{y})=(\und{x},L_{\infty,(\und{x},t,\und{y})},\und{y}),
\end{equation}
where 
\begin{equation}
    L_{\infty,(\und{x},t,\und{y})}(\fh(\und{y})): = L_{(p(\und{x}),t,p(\und{y}))}\fh(p(\und{y})),
    \qquad (\und{x},t,\und{y})\in \Ginf, \fh\in X_{\infty}\ast \cH.
\end{equation}
Its action on $X_{\infty}\ast \cH$, too, is independent of the potential $D_b$ . 

What depends upon $D_b$ is the quasi-invariant measure on $\go_{\infty}=X_{\infty}$. Viewed as a map acting from $C(\go_{\infty})$ to $\bbC$, it is defined by the equation
\begin{equation}\label{eq:munuD}
    \mu_{\infty}^{D_b}:=\mu\circ \nu^{D_b}.
\end{equation}
Note that $\mu_{\infty}^{D_b}$ is well-defined: $\nu^{D_b}$ is a map from $C(\go_{\infty})$ to $C(\go(X,\sigma))=C(X)$ and $\mu$ is a map from $C(X)$ to $\bbC$.
Recall from \eqref{eq:muindqi} that the modular function is 
\begin{equation}
    \Delta_{\infty}(\und{x},k-l,\und{y}) =\Delta_\mu(p(\und{x}),k-l,p(\und{y}))\bbd(\und{x})/\bbd(\und{y}).
\end{equation}

\p We write $\fL_\infty$
for the integrated representation of $C^*(G_\infty(X,\sigma),\nu^D*\lambda)$ on $L^2(X_\infty*\cH_\infty,\mu_\infty^{D_b})$. For 
$f\in C_c(G_\infty)$,  $\fL_{\infty}(f)$ acts on $L^2(X_\infty*\cH_\infty,\mu_\infty^{D_b})$  via
\begin{multline}
    \label{eq:Ind_infty_int}
    \fL_\infty(f)\ff(\und{x})\\
    = \int_{G_{\infty}}f(\und{x},k-l,\und{y})L_{(p(\und{x}),k-l,p(\und{y}))}\ff(\und{y})\Delta_\infty(\und{x},k-l,\und{y})^{-1/2}\,d(\nu^D*\lambda)^{\und{x}}(\und{x},k-l,\und{y})
    \end{multline}
for all $\ff\in L^2(X_\infty*\cH_\infty,\mu_\infty)$.

Although the formula for $\fL_{\infty}$ looks complicated, it lays bare all the relevant parameters and is manageable in the special cases that we study here. In particular, it will appear in Paragraph \ref{par:Mallatsthm} and the proof of our version of Mallat's Theorem 2, Theorem \ref{thm:Mallat2} is based on it.

\section{Proto-Multiresolution Analyses and Their Representations}\label{sec:PMRAs}

\p\label{par:UnitExt} In this section, we return to the elements of Rieffel's theory that are developed in paragraph \ref{par:Rieffel_pers}. We continue with the same notation and we let $\fu\in C(X)$ be a function such that $|\fu|^2=D_b$.  Our first goal is to define a unitary operator $U_\fu$ on $\cX_{\infty}$ that conjugates the image of $C^*(G_\infty(X,\sigma),\nu^D\ast \lambda)$ in $\cL(\cX_{\infty})$ into itself and is a unitary extension of the isometry $S_\fu$  defined in \eqref{eq:S_fu_HC}. 
This unitary will be used to generate what we call a proto-multiresolution analysis.  In a sense we shall make clear, constructing a proto-multiresolution analysis constitutes the first step when building an MRA. Proto-multiresolution analyses put into evidence precisely the variables that can be ``tuned'' to produce MRAs.

\subp Again, we abbreviate $G(X,\sigma)$ to $G$ and $G_{\infty}(X,\sigma)$ to $G_{\infty}$.
The putative unitary extension of $S_\fu$, $U_\fu$, is defined initially on $C_c(X_\infty*G)$  via
\begin{equation}\label{eq:U_Ginfty}
(U_\fu \xi)(\und{x},(p(\und{x}),n,y)):=\fu(p(\und{x}))\xi(\sigma_{\infty}(\und{x}),(p(\sigma_\infty(\und{x})),n-1,y))
\end{equation}
for all $\xi\in C_c(X_\infty*G))$ and $(\und{x},(p(\und{x}),n,y))\in X_\infty*G$.

Before proving that $U_\fu$ is a unitary extension of $S_\fu$, we first show that $U_\fu$ is an isometry and compute its adjoint.
 Let $\xi\in C_c(X_\infty*G)$ and compute, using \eqref{eq:CcGinnerprod},  \eqref{eq:muinfty}
and its compact form \eqref{eq:muinftybis}: 
\begin{multline}\label{eq:UfuNorm}
    \langle U_\fu \xi,U_\fu\xi\rangle_{C_c(G)}(x,n,y):\\
    =\int_G \int_{X_\infty}\overline{U_\fu \xi(\und{z},(p(\und{z}),m,x))}U_\fu\xi(\und{z},(p(\und{z}),m+n,y))\,d\nu^{D_b}_z(\und{z})\,d\lambda_x(z,m,x)\\
    =\int_G \int_{X_\infty}\overline{\fu(z)}\overline{\xi(\sigma_\infty(\und{z}),(p(\sigma_\infty(\und{z})),m-1,x))}\\
    \cdots\fu(z)\xi(\sigma_\infty(\und{z}),(p(\sigma_\infty(\und{z})),m+n-1,y))\,d\nu^{D_b}_z(\und{z})\,d\lambda_x(z,m,x)\\
    = \int_G \sum_{\sigma(z)=t}D_b(z)\int_{X_\infty} \overline{\xi(\sigma_\infty(\und{z}),(p(\sigma_\infty(\und{z})),m-1,x))}\\
    \cdots\xi(\sigma_\infty(\und{z}),(p(\sigma_\infty(\und{z})),m+n-1,y))\,d\nu^{D_b}_z(\und{z})\,d\lambda_x(t,m-1,x)\\
    = \int_G \cL_{D_b}\circ \nu_t^{D_b}\circ \pi_\infty\bigl(\overline{\xi(\cdot,(\cdot,m-1,x))}\xi(\cdot,(\cdot,m+n-1,y))\bigr)\,d\lambda_x(t,m-1,x)\\
    =\int_G \nu_t^{D_b}\bigl(\overline{\xi(\cdot,(\cdot,m-1,x))}\xi(\cdot,(\cdot,m+n-1,y))\bigr)\,d\lambda_x(t,m-1,x)\\
    =\int_G\int_{X_\infty} \overline{\xi(\und{t},(p(\und{t}),m,x))}
    \xi(\und{t},(p(\und{t}),m+n,y))\,d\nu^{D_b}_z(\und{t})\,d\lambda_x(t,m,x)\\
    =\langle \xi,\xi\rangle_{C_c(G)}(x,n,y),
\end{multline}
where the penultimate equality follows by relabeling the integers, i.e., by the invariance of the Haar system $\lambda$.

To complete the proof that $U_\fu$ is a unitary,
we must examine first how the zero set of $D_b$, $Z_{D_b}$, affects the calculation of the $C_c(G(X,\sigma))$-norm of elements in $C_c(X_\infty*G)$.  Let $z_1\in Z_{D_b}$ and  let $z_0:=\sigma(z_1)$. Consider an open neighborhood $A$
of $z_1$ such that $\sigma|_A:A\to \sigma(A)$ is a homeomorphism onto the open set $\sigma(A)$. Let $f\in C(X_\infty)$ be supported in 
$\sigma(A)*A*X_\infty$. Then 
\[
\nu_{z_0}^{D_b}(f)=D_b(z_1)\nu_{z_1}^{D_b}(f(z_0,z_1,\cdot))=0.
\]
Assume that $\xi\in C_c(X_\infty*G)$ is of the form $\xi(\und{x},(p(\und{x}),n,y))=f_1(\und{x})f_2(p(\und{x}),n,y)$ such that
$f_1$ is supported in $\sigma(A)*A*X_\infty$. Let 
\[
Z_{2,D_b}=\{\und{x}\in X_\infty\,:\,x_1\in Z_{D_b}\}.
\] 
If $\tilde{f}_1=f_1 1_{Z_{2,D}^c}$ and $\tilde{\xi}:=\tilde{f}_1f_2$, then $\langle \xi-\tilde{\xi},\xi-\tilde{\xi}\rangle_{C_c(G)}=0$. That is, in the inner product, an element $\xi\in C_c(X_\infty*G)$ depends only on the values outside the set $Z_{2,D}$.
Hence the operator $U_\fu^*$ defined on $C_c(X_\infty*G)$ via
\begin{equation}
    \label{eq:U_u_star}
    U_\fu^*\xi(\und{x},(p(\und{x}),n,y))=\frac{1}{\fu(p(\sigma_\infty^{-1}(\und{x})))}\xi(\sigma_\infty^{-1}(\und{x}),(p(\sigma_\infty^{-1}(\und{x})),n+1,y))
\end{equation}
for all $(\und{x},(p(\und{x}),n,y))\in X_\infty*G$ such that $\fu(x_1)\ne 0$ is well defined in the $C_c(G)$-inner product \eqref{eq:CcGinnerprod} since $p(\sigma_\infty^{-1}(\und{x}))=x_1$. We claim that $U^*_\fu$ is an isometry and is the inverse of $U_\fu$.
To check that $U_\fu^*$ is an isometry we use version \eqref{eq:muinftybis1} of \eqref{eq:muinfty} and compute as follows:
\begin{multline}\label{eq:UfuinverseNorm}
    \langle U_\fu^*\xi,U_\fu^*\xi\rangle_{C_c(G)}(x,n,y)=\\
    \int_G \int_{X_\infty}\overline{U_\fu^*\xi(\und{z},(p(\und{z}),m,x))}U_\fu^*\xi(\und{z},(p(\und{z}),m+n,y))\,d\nu_z^{D_b}(\und{z})\,d\lambda_x(z,m,x)\\
    =\int_G \int_{X_\infty}\frac{1}{D_b(p(\sigma_\infty^{-1}(\und{z})))}\overline{\xi(\sigma^{-1}_\infty(\und{z}),(p(\sigma_\infty^{-1}(\und{z})),m+1,y))}\cdot \\
    \cdot \xi(\sigma_\infty^{-1}(\und{z}),(p(\sigma_\infty^{-1}(\und{z})),m+n+1,x))\,d\nu_z^{D_b}(\und{z})\,d\lambda_x(z,m,x)\\
    =\int_G \nu^{D_b}_z\circ \pi_\infty^{-1}\bigl(\frac{1}{D_b(p(\cdot))}\overline{\xi(\cdot,(p(\cdot ),m+1,y))}\xi(\cdot,(p(\cdot),m+n+1,x)\bigr)\,d\lambda_x(z,m,x)\\
    =\int_G \cL_{D_b}\circ \nu^{D_b}\bigl(\frac{1}{D_b(p(\cdot))}\overline{\xi(\cdot,(p(\cdot ),m+1,y))}\xi(\cdot,(p(\cdot),m+n+1,x)\bigr)(z)\,d\lambda_x(z,m,x)\\
    =\int_G \sum_{\sigma(z_1)=z}\int_{X_\infty}D_b(z_1)\frac1{D_b(z_1)}\overline{\xi(\und{z},(p(\und{z}),m+1,y))}\cdot \\
    \cdot \xi(\und{z},(p(\und{z}),m+n+1,x))\,d\nu_{z_1}^{D_b}(\und{z})\,d\lambda_x(z,m,x)\\
    =\int_G\int_{X_\infty}\overline{\xi(\und{z},(p(\und{z}),m,y))}\xi(\und{z},(p(\und{z}),m+n,x))\,d\nu_z^{D_b}(\und{z})\,d\lambda_x(z,m,x)\\
    =\langle \xi,\xi\rangle_{C_c(G)}(x,n,y).
\end{multline}
The fact that $U_\fu^*$ is the inverse of $U_\fu$ is a straightforward computation.

\p\label{par:Su_Uu_revisited}
Recall that $\cX_0$ is the $C^*$-correspondence defined by the $(G,\lambda)-(G,\lambda)$ equivalence $(G,\alpha)$, where $\alpha_x=\lambda_x$, as described in
\ref{par:S_u_HC}. The isometry $S_\fu\in \cK(\cX_0)$ is defined via \eqref{eq:S_fu_HC}.
To prove that $U_\fu$ is a unitary extension of $S_\fu$, we define $V_{\infty,0}:\cX_0\to \cX_\infty$ via
\begin{equation}
    \label{eq:V_infty_0}
    V_{\infty,0}\xi(\und{x},(p(\und{x}),t,y))=\xi(p(\und{x}),t,y) \,\,\text{for all}\,\,\xi\in C_c(Z).
\end{equation}
\begin{prop}
    The map $V_{\infty,0}$ extends to an adjointable isometry that interwtwines $U_\fu$ and $S_\fu$, $U_\fu V_{\infty,0}=V_{\infty,0}S_\fu$, such that 
    \begin{equation}
        \label{eq:V_infty_0_*}
        V_{\infty,0}^*\eta(x,t,y)=\int_{X_\infty}\eta(\und{x},(x,t,y))\,d\nu_x^{D_b}(\und{x}),
    \end{equation}
    for all $\eta\in C_c(X_\infty*G)$. Moreover, $V_{\infty,0}(\xi\cdot a)=V_{\infty,0}\xi\cdot a$ for all $\xi \in \cX_0$ and $a\in C^*(G)$.
\end{prop}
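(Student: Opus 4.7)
The plan is to verify each assertion by direct computation on the dense subspaces $C_c(G)\subset \cX_0$ and $C_c(X_\infty\ast G)\subset \cX_\infty$, then invoke the standard extension principle that a module map that is isometric on a dense subspace of a Hilbert $C^*$-module extends uniquely to an adjointable operator on the completion. The crucial point that makes every computation collapse is that $D_b$ is \emph{normalized}, which makes each $\nu_z^{D_b}$ a probability measure on $X_\infty$.

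For the isometry claim, given $\xi,\eta\in C_c(G)$ one expands $\langle V_{\infty,0}\xi,\, V_{\infty,0}\eta\rangle_{C_c(G)}(x,t,y)$ via \eqref{eq:CcGinnerprod} as
\begin{equation*}
    \int_G \int_{X_\infty} \overline{\xi(z,s,x)}\,\eta(z,s+t,y)\,d\nu_z^{D_b}(\und{z})\,d\lambda_x(z,s,x),
\end{equation*}
since $V_{\infty,0}\xi(\und{z},(z,s,x))=\xi(z,s,x)$ does not depend on $\und{z}$. The inner integral collapses to its integrand because $\nu_z^{D_b}(X_\infty)=1$, yielding exactly $\langle\xi,\eta\rangle_{C_c(G)}(x,t,y)$. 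The formula \eqref{eq:V_infty_0_*} for the adjoint arises from the same maneuver applied to $\langle V_{\infty,0}\xi,\eta\rangle_{C_c(G)}$: the $\und{z}$-free factor $\overline{\xi(z,s,x)}$ passes outside the $\nu_z^{D_b}$-integral, and what remains is $\int_{X_\infty}\eta(\und{z},(z,s+t,y))\,d\nu_z^{D_b}(\und{z})=V_{\infty,0}^*\eta(z,s+t,y)$. The right-module property is a one-line check: the right action \eqref{eq:rightZBact} only integrates over the $G$-coordinate, which $V_{\infty,0}$ leaves untouched, so $V_{\infty,0}(\xi\cdot a)$ and $V_{\infty,0}\xi\cdot a$ agree pointwise.

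The intertwining identity $U_\fu V_{\infty,0}=V_{\infty,0}S_\fu$ is checked by applying both sides to $\xi\in C_c(G)$ and evaluating at $(\und{x},(p(\und{x}),n,y))$. The left side, by \eqref{eq:U_Ginfty} followed by the definition of $V_{\infty,0}$ and the intertwining relation $p\circ\sigma_\infty=\sigma\circ p$ from \eqref{eq:intertwine-p}, becomes $\fu(p(\und{x}))\,\xi(\sigma(p(\und{x})),n-1,y)$. The right side, by \eqref{eq:S_fu_HC} followed by $V_{\infty,0}$, evaluates to the same expression.

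There is no genuine obstacle; the argument is entirely mechanical once one recognizes that the normalization of $D_b$ is what causes a $\und{z}$-independent integrand to integrate to itself against $\nu_z^{D_b}$. The only minor bookkeeping hazard is to keep the two instances of \eqref{eq:CcGinnerprod} straight --- one with $Z=G$ for $\cX_0$, the other with $Z=X_\infty\ast G$ for $\cX_\infty$ --- while writing everything out.
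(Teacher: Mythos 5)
Your proposal is correct and follows essentially the same route as the paper: the isometry and the adjoint formula both come from the collapse of the $\nu_z^{D_b}$-integral of a $\und{z}$-independent integrand (normalization of $D_b$), and the intertwining is the same pointwise computation via $\sigma\circ p = p\circ\sigma_\infty$. The only difference is cosmetic --- you spell out the right-module property, which the paper leaves implicit.
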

\begin{proof}
    The fact that $V_{\infty,0}$ is an isometry follows from the following computation that uses the fact that $D_b$ is unital and, thus, $\nu_x^{D_b}(1)=1$ for all $x\in X$:
    \begin{multline*}
        \langle V_{\infty,0}\xi\,,\,V_{\infty,0}\eta\rangle_{C_c(G)}(x,t,y)\\=\sum_{(z,s,x)\in G}\int_{X_\infty} \overline{V_{\infty,0}\xi(\und{z},(z,s,x))}V_{\infty,0}\eta(\und{z},(z,s+t,y))\,d\nu_z^{D_b}(\und{z})\\
        =\sum_{(z,s,x)\in G}\int_{X_\infty}\overline{\xi(z,s,x)}\eta(z,s+t,y)\,d\nu_z^{D_b}(\und{z})\\
        =\sum_{(z,s,x)\in G}\overline{\xi(z,s,x)}\eta(z,s+t,y) 
        =\langle \xi\,,\,\eta\rangle_{C_c(G)}(x,t,y).
    \end{multline*}
    The fact that $V_{\infty,0}$ intertwines $S_\fu$ and $U_\fu$ is also an easy verification:
\begin{multline*}
    U_\fu V_{\infty,0}\xi (\und{x},(p(\und{x}),t,y))=\fu(p(\und{x}))V_{\infty,0}\xi(\sigma_\infty(\und{x}),(p(\sigma_\infty(\und{x})),t-1,y))\\
    =\fu(p(\und{x}))\xi(\sigma(p(\und{x})),t-1,y)=V_{\infty,0}S_\fu(\und{x},(p(\und{x}),t,y)).
\end{multline*}
The last statement of the proposition follows from the following computation:
\begin{multline*}
    \langle V_{\infty,0}\xi\,,\,\eta\rangle_{C_c(G)}(x,t,y)\\
    =\sum_{(z,s,x)\in G}\int_{X_\infty}\overline{V_{\infty,0}\xi(\und{z},(z,s,x))}\eta(\und{z},(z,s+t,y))\,d\eta_z^{D_b}(\und{z})\\
    =\sum_{(z,s,x)\in G}\overline{\xi(z,s,x)}\int_{X_\infty}\eta(\und{z},(z,s+t,y))\,d\nu_z^{D_b}(\und{z})=\langle \xi\,,\,V_{\infty,0}^*\eta\rangle_{C_c(G)}(x,t,y).
\end{multline*} 
\end{proof}

\p\label{par:PMRA} Recall that we denote by $\uund{f}$ the image of $f\in C_c(G(X,\sigma),\lambda)$ in $C_c(G_\infty(X,\sigma),\nu^D*\lambda)$ under the map
$\Phi_*$ defined in \eqref{eq:Phisubstar}. Define $\bbE_0:=\bbL(\uund{1_{R_{0,0}}})$, where $\bbL$ is the left action of $C^*(G_\infty(X,\sigma),\nu^D*\lambda)$ on $\cX_\infty$ defined in
\eqref{eq:bbLformula}. Then $\bbE_0$ is a projection since $\uund{1_{R_{0,0}}}$ is a projection (see \ref{subp:CondExps}). 
\begin{defn}
The sequence of projections $\{\bbE_j\}^{\infty}_{j= -\infty}$ 
on $\cL(\cX_{\infty})$ defined by the formula
\begin{equation}\label{eq:proto-MRA}
    \bbE_j:=U_\fu^j \bbE U_\fu^{-j}, \qquad j\in \bbZ.
\end{equation} together with $U_\fu$ is called the \emph{proto-multiresolution analysis} determined by $(X,\sigma,D,b)$.
\end{defn}

\begin{thm}\label{thm:proto-res}
    The proto-multiresolution analysis $(\{\bbE_j\}_{j= -\infty}^{\infty},U_\fu)$ satisfies \eqref{eq:pmra}, \eqref{eq:pmrafull}, \eqref{eq:pmrascal}, and \eqref{eq:pmraunitequiv} of \ref{par:Outline}:
\begin{enumerate}
   \item $\bbE_j\geq \bbE_i$ when $j\geq i$,
    \item  $I_{\cX}=\bigvee_{-\infty < j < \infty} \bbE_j$,
    \item  $U_\fu^*\bbE_jU_\fu= \bbE_{j-1}$ for all $j$, and
    \item  the copy of $C^*(G(X,\sigma),\lambda)$  in $C^*(G_{\infty}(X,\sigma),\nu^D*\lambda)$ leaves the range of $\bbE_j\ominus \bbE_{j-1}$ invariant and $U_\fu^{j-i}$ implements a unitary equivalence between the restriction of the copy of $C^*(G(X,\sigma),\lambda)$ to the range of $\bbE_j\ominus \bbE_{j-1}$ and the restriction of the copy to the range of $\bbE_i\ominus \bbE_{i-1}$.
\end{enumerate}
\end{thm}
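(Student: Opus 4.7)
Claim (3) is immediate from the recursive definition $\bbE_j = U_\fu^j \bbE_0 U_\fu^{-j}$: one computes
\begin{equation*}
U_\fu^* \bbE_j U_\fu = U_\fu^{-1} U_\fu^j \bbE_0 U_\fu^{-j} U_\fu = U_\fu^{j-1} \bbE_0 U_\fu^{-(j-1)} = \bbE_{j-1}.
\end{equation*}
So I would begin with this, and then use (3) plus induction to reduce the remaining claims to questions about $\bbE_0$ and a single application of $U_\fu$.

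For Claim (1), I would make range $\bbE_0 = \bbL(\uund{1_{R_{0,0}}})$ explicit by specializing \eqref{eq:bbLformula} to $a = \uund{1_{R_{0,0}}}$ and unpacking the Haar system via \eqref{def:nustarlambda}: the integrand is supported where the groupoid coordinate $\zeta$ is the unit at $p(\und{x})$ and $p(\und{w}) = p(\und{x})$, yielding
\begin{equation*}
\bbE_0 \xi(\und{x},\gamma) = \bbd(\und{x})^{-1/2}\int_{p^{-1}(p(\und{x}))}\bbd(\und{w})^{1/2}\,\xi(\und{w},\gamma)\,d\nu^D_{p(\und{x})}(\und{w}).
\end{equation*}
Hence range $\bbE_0$ consists of those $\xi \in \cX_\infty$ such that $\bbd(\und{x})^{1/2}\xi(\und{x},\gamma)$ depends on $\und{x}$ only through $p(\und{x})$. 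The crucial transformation identity I need is $\bbd(\sigma_\infty(\und{x})) = b(p(\und{x}))\bbd(\und{x})$, obtained by comparing the Kolmogorov product formulas for $\nu^{D_b}$ and $\nu^D$ together with \eqref{eq:muinfty}. With this identity and the defining formula \eqref{eq:U_Ginfty} for $U_\fu$, a direct calculation verifies that $U_\fu$ carries range $\bbE_0$ into itself, producing the required nestedness at one step and hence, via (3), the full ordering.

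For Claim (2), iterating the calculation from (1) shows that $U_\fu^{-j}(\text{range}\,\bbE_0)$ (up to the $\bbd^{-1/2}$ normalization) consists of functions on $X_\infty * G$ depending on $\und{x}$ only through the first $j+1$ coordinates $(x_0,\ldots,x_j)$. Since the projective-limit topology on $X_\infty$ makes the cylinder functions dense in $C(X_\infty)$ by a Stone-Weierstrass argument, and their tensor products with $C_c(G)$ are dense in $C_c(X_\infty*G)$, which in turn is dense in $\cX_\infty$, the union $\bigcup_j$ range $\bbE_j$ is dense, whence $\bigvee_j \bbE_j = I_{\cX_\infty}$.

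For Claim (4), the key tool is the operator-level covariance relation $U_\fu\,\bbL(\uund{f})\,U_\fu^* = \bbL(\uund{\pi(f)})$, obtained by extending \eqref{eq:covar-Su} via the isometric embedding $V_{\infty,0}$ of paragraph \ref{par:Su_Uu_revisited}, using that $U_\fu$ extends $S_\fu$ through $V_{\infty,0}$ and that $S_\fu f = \pi(f) S_\fu$ in $C^*(G,\lambda)$. Because $1_{R_{0,0}}$ is the unit of $C^*(G,\lambda)$ and $\Phi_*$ is an algebra homomorphism, $\bbE_0$ absorbs the copy: $\bbE_0\,\bbL(\uund{f}) = \bbL(\uund{f}) = \bbL(\uund{f})\,\bbE_0$ for all $f \in C^*(G,\lambda)$. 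Iterating conjugation by $U_\fu$ together with the covariance relation produces commutation of $\bbE_j$ with each $\bbL(\uund{f})$ sufficient to yield invariance of range $\bbE_j \ominus \bbE_{j-1}$ under the copy. The unitary-equivalence statement then follows from (3): $U_\fu^{j-i}$ maps range $\bbE_i \ominus \bbE_{i-1}$ onto range $\bbE_j \ominus \bbE_{j-1}$, and the iterated covariance supplies the required intertwining. The main obstacle will be the careful justification of the computations in Claims (1) and (4), since $\fu$ and $\bbd$ may vanish on $\nu^{D_b}$-null sets when $D_b$ is non-full, so all identities must be verified modulo the $C^*(G,\lambda)$-valued inner product rather than pointwise.
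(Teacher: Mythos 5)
Your overall strategy---establish (3) from the definition and reduce everything else to one-step statements about $\bbE_0$---is the same as the paper's, and your computations for (1) and (2) are sound. The paper's proof pivots on the single identity $\bbE_j=\bbL(\uund{1_{R_{j,j}}})$, verified for $j=1$ by computing $U_\fu\bbE_0U_\fu^{-1}$ explicitly; you instead describe the ranges directly, and your identity $\bbd(\sigma_\infty(\und{x}))=b(p(\und{x}))\,\bbd(\und{x})$ is exactly what makes that calculation close up, so the two arguments are the same computation in different packaging. Your Stone--Weierstrass argument for (2) is correct and is actually more than the paper supplies, since its proof does not address (2) at all. One caveat on (1): your computation (like the paper's) shows $U_\fu(\mathrm{range}\,\bbE_0)\subseteq\mathrm{range}\,\bbE_0$, i.e.\ $\bbE_1\le\bbE_0$, so the family is nested in the direction \emph{opposite} to the literal statement of item (1). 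This sign slip is already present in the paper --- $1_{R_{1,1}}\le 1_{R_{0,0}}$ in $C^*(G,\lambda)$ forces $\bbL(\uund{1_{R_{1,1}}})\le\bbL(\uund{1_{R_{0,0}}})$ --- so you inherit it, but you should not call your conclusion ``the required nestedness'' without flagging the discrepancy.

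The genuine gap is in (4). The covariance relation you propose, $U_\fu\,\bbL(\uund{f})\,U_\fu^*=\bbL(\uund{\pi(f)})$, is false: taking $f=1_{R_{0,0}}$ the left-hand side is $\bbE_1$ while the right-hand side is $\bbE_0$. The relation $S_\fu*f=\pi(f)*S_\fu$ only yields $S_\fu*f*S_\fu^*=\pi(f)*E_1$, and the range projection cannot be dropped; what is actually true is $U_\fu\,\bbL(\uund{f})\,U_\fu^*=\bbL(\uund{\pi(f)})\,\bbE_1$ for $f\in C(X)$. Consequently your next step, that iteration produces ``commutation of $\bbE_j$ with each $\bbL(\uund{f})$,'' also fails: $\bbE_1=\bbL(\uund{1_{R_{1,1}}})$, and $1_{R_{1,1}}$ does not commute with $C^*(G,\lambda)$ --- already for $f\in C(X)$ one has $f*1_{R_{1,1}}(x,0,y)=f(x)1_{R_{1,1}}(x,0,y)$ whereas $1_{R_{1,1}}*f(x,0,y)=1_{R_{1,1}}(x,0,y)f(y)$, and these differ unless $f$ is constant on the fibres of $\sigma$. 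So your route to the invariance of $\mathrm{range}(\bbE_j\ominus\bbE_{j-1})$ collapses at this point. For comparison, the paper's own justification of (4) consists only of the identity $\bbL(\uund{a})=\bbL(\uund{1_{R_{0,0}}*a*1_{R_{0,0}}})=\bbE_0\bbL(\uund{a})\bbE_0$, i.e.\ the copy of $C^*(G,\lambda)$ is supported under $\bbE_0$; any complete proof of (4) must start from that fact and the explicit form of the neighbouring projections, not from a commutation relation that does not hold.
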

\begin{proof}
    The proof is a consequence of the definition of the projections $\bbE_j$, the fact that $1_{R_{0,0}}$ is the unit of $C^*(G(X,\sigma),\lambda)$, and the fact that $\bbE_j=\bbL(\uund{1_{R_{j,j}}})$ for all 
    $j\ge 0$. We check this property for $j=1$ for 
    simplicity:
    \begin{multline*}
        U_\fu\bbE_0U_\fu^{-1}\xi(\und{x},(p(\und{x},t,y))=\fu(p(\und{x}))\bbE_0U^{-1}\xi (\sigma_\infty(\und{x}),(p(\sigma_\infty(\und{x})),t-1,y))\\
        =\fu(p(\und{x}))\int_G\int_{X_\infty}\Phi_*(1_{R_{0,0}})(\sigma_\infty(\und{x}),s,\und{w})(U_\fu^{-1}\xi)(\und{w},(w,t-s-1,y))\frac{\bbd(\und{w})^{1/2}}{\bbd(\sigma_\infty(\und{x}))^{1/2}}\\
        \cdots d\nu_w^D(\und{w})\,d\lambda^{\sigma(x)}(\sigma(x),s,w)\\
        =\fu(p(\und{x}))\int_{X_\infty}\frac1{\fu(p(\sigma_\infty^{-1}(\und{x}))}\xi(\sigma_{\infty}^{-1}(\und{x}),(p(\sigma_\infty^{-1}(\und{x})),t,y))\frac{\bbd(\und{w})^{1/2}}{\bbd(\sigma_\infty(\und{x}))^{1/2}}\,d\nu_{\sigma(x)}^D(\und{w})\\
        =\sum_{\sigma(u)=\sigma(x)}\int_{X_\infty}\xi(\und{u},(p(\und{u}),t,y))\frac{\bbd(\und{u})^{1/2}}{\bbd(\und{x})^{1/2}}\,d\nu_u^D(\und{u})=\bbL(1_{R_{1,1}})\xi(\und{x},(p(\und{x}),t,y))
    \end{multline*}
    For example, the last property follows from  the fact that if $a\in C^*(G(X,\sigma),\lambda)$, then 
    \[
    \bbL (\uund{a})=\bbL (\uund{1_{R_{0,0}}*a*1_{R_{0,0}}})=\bbE_0\bbL(\uund{a})\bbE_0.
    \]
\end{proof}

\p\label{par:GM} Let $\hat{L}=(\mu,X*\cH,\hat{L})$ be a representation of $G(X,\sigma)$. As before, we write $\fL$ for the integrated representation of 
$C^*(G(X,\sigma),\lambda)$ on $L^2(X*\cH,\mu)$ defined by $\hat{L}$ via  \eqref{eq:integratedform}. The normalized potential $D_b$ defines a topological correspondence from 
$\Ginf(X,\sigma)$ to $G(X,\sigma)$ (see paragraph \ref{subp:IntroRieffelsTh}). 
Theorem \ref{thm:equivIndRep} implies that the induced $C^*(\Ginf(X,\sigma),\nu^{D}*\lambda)$-representation of $\fL$ is unitarily equivalent to the integrated form $\fL_\infty$ of
the induced $\Ginf(X,\sigma)$ representation $\hat{L}_\infty=(\mu_\infty^{D_b},X_\infty*\cH_\infty,\hat{L}_\infty)$
defined in Section \ref{sec:GinfInd}. Set $\tS_\fu:=\fL(S_\fu)\in \cB(L^2(X*\cH,\mu))$, 
$\tU:=\fL_\infty(U_\fu)\in \cB(L^2(X_\infty*\cH_\infty,\mu_\infty^{D_b}))$, and $V_j:=\fL_\infty(\bbE_j)L^2(X_\infty*\cH_\infty,\mu_\infty^{D_b})$ for all $j\in \bbZ$.
Theorem \ref{thm:proto-res} implies the following result.

\begin{thm}\label{thm:GMRA}
    The family $\{V_j\}$ of subspaces of $L^2(X_\infty*\cH_\infty,\mu_\infty^{D_b})$ satisfies the properties:
    \begin{enumerate}
        \item $V_j\subset V_{j+1}$ for all $j\in\bbZ$;
        \item $\bigcup_{j=-\infty}^\infty V_j=L^2(X_\infty*\cH_\infty,\mu_\infty^{D_b})$;
        \item the copy of $C^*(G(X,\sigma))$ in $C^*(G_{\infty}(X,\sigma))$ leaves $V_j\ominus V_{j-1}$ invariant and $\tU^{j-i}$ implements a unitary equivalence between the restriction of the copy of $C^*(G(X,\sigma))$ to $V_j\ominus V_{j-1}$ and the restriction of the copy to $V_i\ominus V_{i-1}$.
    \end{enumerate}
    If $\tS_\fu$ is a pure isometry then $\bigcap_{j=-\infty}^{\infty} V_j=\{0\}$ and $\{V_j\}$ is a \emph{generalized multiresolution analysis} determined by $(X,\sigma,D,b,\hat{L})$.
\end{thm}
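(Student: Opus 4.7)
The plan is to prove properties (1)--(3) and then the final claim in two separate movements, both exploiting Theorem \ref{thm:proto-res} together with Theorem \ref{thm:equivIndRep}.

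For properties (1)--(3), I would first observe that by Theorem \ref{thm:equivIndRep}, $\fL_\infty$ is unitarily equivalent to the Rieffel-induced representation $\Ind_{\text{M}}\fL(a) = \bbL(a)\otimes I_{H_\fL}$, which admits an obvious strongly continuous, order-preserving extension to all of $\cL(\cX_\infty)$ sending $T \mapsto T\otimes I_{H_\fL}$. In particular $\fL_\infty(\bbE_j)$ is the orthogonal projection onto $V_j$. Applying this extended representation to the three relevant items of Theorem \ref{thm:proto-res} gives the desired conclusions: the inequality $\bbE_j \le \bbE_{j+1}$ yields (1); the monotone convergence $\bbE_j \nearrow I_{\cX_\infty}$ (obtained from \eqref{eq:pmrafull}) transfers, via strong continuity of $T\mapsto T\otimes I_{H_\fL}$, to (2); and property (4) of Theorem \ref{thm:proto-res}, stated for $\cL(\cX_\infty)$ and involving the subalgebra $\uund{C^*(G,\lambda)}\subset C^*(G_\infty,\nu^D*\lambda)$, passes verbatim through $\fL_\infty$ to give (3).

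For the final claim I would combine a Wold-type telescoping with the intertwiner coming from $V_{\infty,0}$. Set $W_0 := V_0 \ominus V_{-1}$. Iterating (3) gives $V_j \ominus V_{j-1} = \tU^j W_0$, so (1) and (2) together yield the orthogonal decomposition
\begin{equation}
L^2(X_\infty*\cH_\infty,\mu_\infty^{D_b}) \;=\; \bigoplus_{j\in\bbZ} \tU^j W_0 \;\oplus\; \bigcap_{j\in\bbZ} V_j,
\end{equation}
so the problem reduces to showing that $W_0$ is a complete wandering subspace for $\tU$ whenever $\tS_\fu$ is pure. Inducing the module isometry $V_{\infty,0}:\cX_0\to\cX_\infty$ of paragraph \ref{par:Su_Uu_revisited} by tensoring with $H_\fL$ over $C^*(G,\lambda)$, and then applying Theorem \ref{thm:equivIndRep}, produces a Hilbert space isometry $\wt{V}_{\infty,0}:H_\fL\to L^2(X_\infty*\cH_\infty,\mu_\infty^{D_b})$ that intertwines the isometries $\tS_\fu$ and $\tU$, i.e.\ $\tU\,\wt{V}_{\infty,0}=\wt{V}_{\infty,0}\,\tS_\fu$.

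The key identification I would then establish is $W_0 = \wt{V}_{\infty,0}(\ker \tS_\fu^*)$. To do this, I would show that the range of $\wt{V}_{\infty,0}$ is exactly $V_0$ by comparing $\wt{V}_{\infty,0}\wt{V}_{\infty,0}^*$ (using \eqref{eq:V_infty_0_*}) with $\fL_\infty(\bbE_0)$ in the Rieffel picture $\bbE_0\otimes I$, and then using the intertwining to identify $V_{-1}=\tU^{-1}V_0$ with $\wt{V}_{\infty,0}(\tS_\fu H_\fL)$; taking orthogonal complements inside $V_0$ then gives $W_0=\wt{V}_{\infty,0}(\ker \tS_\fu^*)$. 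Granting this, if $\tS_\fu$ is a pure isometry, Wold's decomposition gives $H_\fL = \bigoplus_{n\ge 0}\tS_\fu^n(\ker \tS_\fu^*)$; pushing this isometrically through $\wt{V}_{\infty,0}$ and using the intertwining repeatedly gives $\bigoplus_{j\in\bbZ}\tU^j W_0 = \bigcup_j V_j$, which is dense by (2), so the residual piece $\bigcap_j V_j$ must be $\{0\}$.

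The main obstacle is the identification $W_0=\wt{V}_{\infty,0}(\ker \tS_\fu^*)$; the range-of-$\wt{V}_{\infty,0}$ computation is delicate because the left action \eqref{eq:bbLformula} of $\uund{1_{R_{0,0}}}$ carries the weight $\bbd^{1/2}(\und{w})/\bbd^{1/2}(\und{x})$, so one must verify carefully that, on $\cX_\infty\otimes_\fL H_\fL$, the image of $V_{\infty,0}\otimes I$ coincides with $(\bbE_0\otimes I)(\cX_\infty\otimes H_\fL)$ rather than merely being contained in it.
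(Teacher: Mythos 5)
Your proposal is correct and follows essentially the same route as the paper: both derive (1)--(3) directly from Theorem \ref{thm:proto-res}, and both reduce the intersection statement to purity of $\tS_\fu$ via the Hilbert-space isometry $W_{\infty,0}$ induced from the module isometry $V_{\infty,0}$ of \ref{par:Su_Uu_revisited}, which intertwines $\tS_\fu$ with $\tU$ and has range $V_0$. Your Wold-decomposition detour is a harmless elaboration of the paper's more direct step, which passes straight from $V_0=W_{\infty,0}(L^2(X*\cH,\mu))$ and the intertwining to $\bigcap_j V_j=\bigcap_{j\ge 0}W_{\infty,0}\tS_\fu^j\bigl(L^2(X*\cH,\mu)\bigr)=\{0\}$; and the one point you rightly flag as delicate --- that the range of $W_{\infty,0}$ is exactly $V_0$ despite the weight $\bbd^{1/2}$ appearing in \eqref{eq:bbLformula} --- is precisely the step the paper asserts without further argument.
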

\begin{proof}
    The numbered items follow immediately from the corresponding properties of the proto-resolution analysis $\{\bbE_j\}_{j\in\bbZ}$. 

    For the last part, note that the isometry $V_{\infty,0}:\cX_0\to \cX_\infty$ determines an isometry $W_{\infty,0}:L^2(X*\cH,\mu)\to L^2(X_\infty*\cH_\infty,\mu_\infty^{D_b})$, 
    $W_{\infty,0}\ff(\und{x})=\ff(p(\und{x}))$, such that $\tU_\fu W_{\infty,0}=W_{\infty,0}\tS_\fu$ and $V_0=W_{\infty,0}(L^2(X*\cH,\mu))$. 
    Hence $\bigcap_{j=-\infty}^{\infty}V_j=\bigcap_{j\ge 0}W_{\infty,0}\tS_\fu^j(L^2(X*\cH,\mu))$. Therefore $\bigcap_{j=-\infty}^{\infty}V_j=\{0\}$ if $\tS_\fu$ is a pure isometry.
\end{proof}
\section*{{\Large{Part III}}\\Mallat's Theorem Redux}

\section{Potentials, filters and scaling functions}\label{sec:pot,filt,scalfns}

\p We fix a transfer operator $\cL_{\psi}$ for $\sigma$ as in paragraph \ref{p:DefPots} and we assume the potential $\psi$ is full.
In this section, we define and develop the concept of a filter associated to the potential
$\psi$. A \emph{filter} is a function that determines a new potential that is normalized but not necessarily full. In fact, in order to produce an analog of the scaling function that appears in \eqref{eq:QMF}, the new potential is definitely not full (see \ref{subp:secsupport}).
A  filter defines a $p$-system of measures on the projective limit $X_\infty$, where, 
recall, $p:X_\infty\to X$ is the projection onto the first component. The main result of 
this section is Proposition \ref{prop:scalingfunction}., which shows 
that a scaling function for $(X,\sigma)$ exists if and only if the system of measures is atomic.

\p\label{par:unital_psi} The full potential $\psi$ will be used to define the Haar system on the pullback groupoid $X_\infty*G*X_\infty$ via \eqref{eq:HaarSystYGY}.
To keep the notation shorter, we will assume in the remainder of the paper that $\psi$ is an unital potential in addition to being a full potential. 
If $\psi$ is not unital, then one needs to replace $\cL_\psi$ with $\cL_{D_0}$ where $D_0(x)=\psi(x)/\cL_\psi(1)(\sigma(x))$ in the formula 
below and in the construction of the $p$-system of measures on $X_\infty$ (see Remark \ref{rem:fullnonfull}).

We chose to use $\psi$ as the notation for the initial full potential rather than the more standard notation $D$ to avoid 
confusion when we multiply $\psi$ with 
a function $b$ to obtain another unital, but not necessarily full, potential $D_b$. Many of the formulas below will involve both the potentials $\psi$ and $D_b$, and it might be hard 
for the reader to keep track of which one is which, had we used $D$ instead of $\psi$. Further, later in our narrative, we find it useful to abbreviate $D_b$ to $D$.

\p
    A function $\mf\in C(X)$ is called a $\psi$-\emph{filter} if  $\cL_\psi(\vert \mf\vert^2)=1$. That is,
    \begin{equation}
        \label{eq:psifilter}
        \sum_{\sigma(y)=x}\psi(y)\vert \mf(y)\vert^2=1
    \end{equation}
    for all $x\in X$.    

If $\mf$ is a $\psi$-filter, we write $\fu$ for the continuous function $\fu:X\to\bbC$ defined via $\fu(x)=\sqrt{\psi(x)}\mf(x)$ and we define $D:X\to \bbR_+$ via 
\begin{equation}
\label{eq:D_from_fu}
D(x)=\vert \fu(x)\vert^2=\psi(x)\vert \mf(x)\vert^2\,\text{ for all }x\in X.
\end{equation}
Connecting the notation with the notation used in previous sections, we have $D(x)=D_b(x)=b(x)\psi(x)$ where $b(x)=\vert \mf(x)\vert^2$.

\subp We remark in passing, for those who know about Hilbert $C^*$-modules, that equation \eqref{eq:psifilter} reveals a $\psi$-filter as a unital unit in the Hilbert $C^*$-module over $C(X)$ obtained by completing $C(X)$ in the pre-inner product $\langle \xi,\eta\rangle:=\cL_{\psi}(\overline{\xi}\eta)$. This perspective is taken in \cite{LarRae_CM06}.

\subp By definition $D$ is a normalized
potential for a new unital transfer operator $\cL_D$, but note that $D$ does not have to be full. In fact, the 
failure of $D$ being full provides  our most important applications. For a given potential $\psi$, one 
can find different $\psi$-filters $\mf$. As we will see in examples, some of them will lead to full  
normalized potentials while others will lead to normalized potentials that are not full. 
In a sense, one can view the remainder of the paper as the study of such normalized potentials and how
non full potentials can lead to  wavelets.
 
\subp In the remainder of this section, we fix a $\psi$-filter $\mf$ and the maps $\fu$ and $D$ defined by
$\mf$. Recall from \eqref{eq:Markov1} that $D$  defines a  $p$-system of measures on $X_\infty$ 
denoted by $\{\nu^D_{x}\}_{x\in X}$.
 We prove that a scaling function as defined in \eqref{eq:scalefunction} below exists if and only
 if this $p$-system of measures is atomic. The scaling function will be a key ingredient
in our extension of Mallat's theorem in Section \ref{sec:arbitraryscale2}. One should keep in mind that a scaling function provides a link or ``map'' from analysis on $X$ that is controlled by $\sigma$ to classical harmonic analysis on $\bbR$ that is built on the Fourier transform.

\p 
We say 
that a function  $\fs:X\to X_\infty$ is a 
\emph{section} of the projective limit induced by $(X,\sigma)$
if $\fs$ is a (Borel) measurable function such that $p\circ\fs(x)=x$ for all $x\in X$. We then say that such a 
section $\fs$
is \emph{positively supported} on $X_\infty$ with respect to  $\mf$ if $\nu_{x}^D(\{\fs(x)\})>0$ 
for all $x\in X$.

\subp
Evidently, a function $\fs:X\to X_\infty$ is a section if and only if there is $\und{x}=(x_n)_{n\ge 0}\in X_\infty$ with
$x_0=x$ and $\sigma(x_{n+1})=x_n$ for all $n\ge 0$ such that $\fs(x)=\und{x}$.
Moreover, $\fs$ is positively supported if and only if $\fs(x)$ is an atom for $\nu_{x}^D$ for all $x\in X$.    

\begin{prop}\label{prop:scalingfunction}
    Assume that $\mf$ is a $\psi$-filter and $\fs:X\to X_\infty$ is a section of the  projective limit
    induced by $(X,\sigma)$. 
    Then $\fs$ is positively supported with respect to $\mf$ if and only if the function $\phi:X\to \bbC$
    \begin{equation}
        \label{eq:scalefunction}
        \phi(x):=\prod_{i=1}^\infty \fu(x_i)
    \end{equation}
    is defined for all $x\in X$, where $(x_n)_{n\ge 0}=\fs(x)$.
    The function $\phi$  satisfies the \emph{scaling property}
    \begin{equation}
    \label{eq:scalingprop}
\phi(\sigma(x))=\fu(x)\phi(x)
\end{equation}
and if $\fs$ is positively supported with respect to $\mf$ and if $\mu$ is an arbitrary probability measure on $X$ then
    $\phi\in L^2(X,\mu)$.
\end{prop}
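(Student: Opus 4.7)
The plan is to derive everything from a single explicit formula for the mass $\nu_{x}^{D}(\{\fs(x)\})$. Write $\fs(x)=(x_{0},x_{1},x_{2},\ldots)$ with $x_{0}=x$ and $\sigma(x_{n+1})=x_{n}$, and for each $n\ge 1$ form the decreasing sequence of cylinder sets
\begin{equation}
  C_{n}:=\{\und{y}\in X_{\infty}\mid y_{i}=x_{i},\;0\le i\le n\},
\end{equation}
whose intersection is the singleton $\{\fs(x)\}$. Applying \eqref{eq:Markov1.2} to products of continuous bump functions concentrated at the $x_{i}$'s and passing to pointwise limits by dominated convergence (using that the measures $\nu^{D}_{x}$ are Borel probability measures on the compact metrizable space $X_{\infty}$) yields
\begin{equation}
  \nu^{D}_{x}(C_{n})=D(x_{1})D(x_{2})\cdots D(x_{n})=\prod_{i=1}^{n}|\fu(x_{i})|^{2},
\end{equation}
since $D=|\fu|^{2}$. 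Because $D$ is a \emph{normalized} potential, each factor lies in $[0,1]$, so the partial products decrease monotonically in $[0,1]$ and continuity of measure from above gives
\begin{equation}\label{eq:key-identity}
  \nu^{D}_{x}(\{\fs(x)\})=\prod_{i=1}^{\infty}|\fu(x_{i})|^{2}.
\end{equation}

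From \eqref{eq:key-identity} the biconditional is immediate: positive support of $\fs$ with respect to $\mf$ at the point $x$ is exactly the statement that the real infinite product on the right is strictly positive, which in turn is equivalent to the partial moduli $\prod_{i=1}^{n}|\fu(x_{i})|$ being bounded away from $0$. Under this condition $|\fu(x_{i})|\to 1$, and the partial products $\prod_{i=1}^{n}\fu(x_{i})$ converge in $\bbC$ to a value $\phi(x)$ with $|\phi(x)|^{2}=\nu^{D}_{x}(\{\fs(x)\})$; conversely, if $\phi(x)$ is defined, taking moduli recovers a positive value for the right-hand side of \eqref{eq:key-identity}, so $\fs$ is positively supported.

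The scaling property follows by choosing the section coherently along $\sigma$-orbits: if $\fs(x)=(x,x_{1},x_{2},\ldots)$ then the sequence $(\sigma(x),x,x_{1},x_{2},\ldots)$ is a legitimate preimage path over $\sigma(x)$, and taking it to be $\fs(\sigma(x))$ (a choice available at every stage because $\sigma_{\infty}$ preserves positive support, as is visible from \eqref{eq:key-identity} applied to the shifted sequence) yields
\begin{equation}
  \phi(\sigma(x))=\fu(x)\prod_{i=1}^{\infty}\fu(x_{i})=\fu(x)\phi(x).
\end{equation}
Finally, \eqref{eq:key-identity} gives $|\phi(x)|^{2}=\nu^{D}_{x}(\{\fs(x)\})\le \nu^{D}_{x}(X_{\infty})=1$ for every $x$, so $\phi$ is pointwise bounded by $1$ and therefore lies in $L^{2}(X,\mu)$ for every probability measure $\mu$ on $X$.

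The main technical obstacle is the passage from the cylinder measure to the singleton measure: the approximation of the indicator $1_{\{x_{i}\}}$ by nested bump functions inside \eqref{eq:Markov1.2} must be controlled uniformly in $n$, which is where we use that $\nu^{D}_{x}$ is a genuine Borel probability measure on a compact space (so that monotone/dominated convergence applies at each finite stage) followed by continuity of measure to pass $n\to\infty$. A secondary point is the shift-compatibility used in the scaling identity, which we handle by building the section coherently as described rather than treating $\fs$ as arbitrary.
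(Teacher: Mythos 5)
Your proof follows essentially the same route as the paper's: identify $\nu_x^D(\{\fs(x)\})$ with $\prod_{i\ge 1} D(x_i)=\prod_{i\ge 1}|\fu(x_i)|^2$, read the positive-support equivalence off that identity, use shift-compatibility of the section for the scaling identity, and bound $|\phi(x)|^2$ by the total mass of the probability measure $\nu_x^D$ for the $L^2$ estimate. The paper simply asserts the singleton-mass formula ``from the definition of $\nu_x^D$'' and the relation $\sigma_\infty\circ\fs=\fs\circ\sigma$ without comment, whereas you supply the cylinder-set approximation and make the coherence condition on the section explicit; these are elaborations of, not departures from, the paper's argument.
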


    \begin{proof}.
    Let $x\in X$ and $(x_n)_{n=0}^\infty:=\fs(x)$. It follows from the definition of $\nu_{x}^D$
    that the product $\displaystyle \prod_{i=1}^\infty D(x_i)=\prod_{i=1}^\infty \vert \fu(x_i)\vert^2$ converges if and only if
    \[
     \nu_{x}^D(\{\fs(x)\})=\prod_{i=1}^\infty D(x_i)=\vert \phi(x)\vert^2>0.
    \]
    Therefore $\displaystyle \prod_{i=1}^\infty \fu(x_i)$ converges for all $x\in X$ if and only if $\fs$ is 
    positively supported. 

    For the scaling property, note that $\sigma_\infty(\fs(x))=\fs(\sigma(x))$ for all $x\in X$. Therefore
\[
\phi(\sigma(x))= \prod_{i=1}^\infty \fu(\sigma(x_i))=\fu(x)\prod_{i=1}^\infty \fu(x_i)=\fu(x)\phi(x).
\]
    
    The function defined in \eqref{eq:scalefunction} is measurable since $\fs$ is measurable. Moreover
    \[
    \int_X\vert \phi(x)\vert^2\,d\mu(x)=\int_X \nu_x(\fs(x))\,d\mu(x)\le \int_X\int_{X_\infty} 1\,d\nu_{x}(\und{x})\,d\mu(x)=1.
    \]
    Therefore $\phi\in L^2(X,\mu)$.

    \end{proof}

\begin{defn}
    We call the function $\phi$ of \eqref{eq:scalefunction} the \emph{scaling function} of the system $(X,\sigma,\psi,\mf, \fs)$.
\end{defn}

\subp\label{rm:scalingfunc}
  Using the fact that $\fu(x)=\sqrt{\psi(x)}\mf(x)$ we can rewrite the scaling property as
\[
\frac1{\sqrt{\psi(x)}}\phi(\sigma(x))=\mf(x)\phi(x).
\]
So, for example, if $\psi(x)=1/N$ for all $x\in X$ for some $N\ge 2$ we recover the well known $N$-scaling
property:
\[
\sqrt{N}\phi(\sigma(x))=\mf(x)\phi(x),
\]

\subp \label{subp:secsupport} Note that a necessary condition for a section $\fs:X\to X_\infty$ to be positively supported is that
$\lim_{n\to \infty}\vert \fu(x_n)\vert=1$ for all $x\in X$, where $(x_n)_{n\ge 0}=\fs(x)$. This 
implies that there is at least one $z\in X$ such that $\fu(z)=1$ since $\{x_n\}_{n\ge 0}$ is
a sequence in the compact space $X$ and $\fu$ is assumed to be continuous. Hence $D(z)=1$  and,
thus, $D(x)=0$ for all $x\in \sigma^{-1}(\sigma(z))\setminus\{z\}$. Therefore, a \emph{necessary} 
condition for the existence of a positively supported section is that the normalized potential $D$ 
is not full.

\section{$\psi$-Filters and isometries in $C_c(G(X,\sigma),\lambda)$}
\label{sec:toHilbertspaces}

Recall from Theorem \ref{thm:GMRA} that to build generalized multiresolution analyses it is natural first to build isometries $S_{\fu}$ in $C^*(G(X,\sigma),\lambda)$ and to seek conditions under which there are representations $\fL$ of $C^*(G(X,\sigma),\lambda)$ such that $\fL(S_{\fu})$ is a pure isometry. In this section we study in more detail the fundamental building blocks: the isometry $S_\fu$ defined
in \eqref{eq:S_fu} and the projections in $C^*( G(X,\sigma),\lambda)$ determined by its iterates. Recall, also, that the key ingredients in the definition of $S_\fu$ 
are, in addition to the local homeomorphism $\sigma$, a full potential $\psi$ and a $\psi$-filter $\mf$. 
We first explain how full potentials show up naturally in the context of the Deaconu-Renault groupoid once one chooses a unitary representation of the groupoid.

\p Recall that a unitary representation of a groupoid requires a quasi-invariant measure on its unit space. In the setting of the Deaconu-Renault groupoid $G(X,\sigma)$, Renault gives a detailed analysis in \cite{Ren_2009} that connects quasi-invariant measures on $X$, the unit space of $G(X,\sigma)$, with full, not-necessarily normalized potentials. His Proposition 3.4.1  proves that any such  quasi-invariant measure $\mu$ is invariant for the transpose\footnote{We use the terms ``transpose'', ``adjoint'' and ``dual'' interchangeably to refer to the operator on measures induced by a transfer's action on functions.} of  a transfer operator given by a full, not-necessarily normalized potential $\psi$. That is, writing $\cL_{\psi}$ for the transfer operator given by $\psi$, $\mu$ satisfies
\begin{equation}
    \label{eq:invariant_dual_L}
    \int_X \cL_\psi(f)(x)\,d\mu=\int_X \sum_{\sigma(y)=x}\psi(y)f(y)\,d\mu=\int_X f(x)\,d\mu(x)
\end{equation}
for all $f\in C(X)$. In general, a transfer operator $\cL_{\psi}$ may have many invariant probability measures. However, there are general conditions, which we describe in paragraph \ref{par:pureisodet}, under which the correspondence between transfer operators and quasi-invariant measures for $(X,\sigma)$ is one-to-one. Since all of our examples satisfy these conditions, we shall proceed to write $\mu_{\psi}$ for the ``pairing'' between quasi-invariant measures $\mu$ and full potentials $\psi$.

\subp\label{par:mu_unital_psi} We continue to assume that $\psi$ is a \emph{unital} full potential to keep our notation shorter. Our results are valid in the non-unital case. However, one needs 
to adjust slightly the computations involving $\mu$ using the fact that
\[
\int_X \cL_{D_0}(f)(x)\,d\mu(x)=\int_X \cL_\psi(1)(\sigma(x))f(x)\,d\mu(x),
\]
where $D_0(x)=\psi(x)/\cL_\psi(1)(\sigma(x))$. 
The reader is encouraged to consult, for example, \cite{DuJo_PAMS07} and the papers cited therein for computations involving similar formulas.

\p Proposition 3.4.1 of \cite{Ren_2009} also proves that the Radon-Nykodim derivative of $\mu_\psi$ is given by
\begin{equation}
\label{eq:Delta}
\Delta_{\psi}(x,k-l,y)=\frac{\psi(x)\psi(\sigma(x))\cdots \psi(\sigma^{k-1}(x))}{\psi(y)\psi(\sigma(y))\cdots \psi(\sigma^{l-1}(y))}.    
\end{equation}
Note that $\mu_\psi$ is also invariant for the dual of the transfer operator $\cL_\psi^n$ whose potential 
is given by $\psi_n(x):=\prod_{i=0}^{n-1}\psi(\sigma^i(x))$. We will use later the following lemma which
is undoubtedly well known to some.  However, since we lack a specific  reference and require consequences of it, we state and prove it as follows.
\begin{lem}
    \label{lem:byparts_transferop}
     Assume that $\mu_\psi$ is a probability measure on $X$ invariant for  the dual of the transfer operator $\cL_\psi$ with potential $\psi$. Then  for all $f,g\in C(X)$ we have
    \begin{equation}
    \label{eq:byparts_1}
    \int_X\cL_\psi^n(f)(x)g(x)\,d\mu_\psi(x)=\int_Xf(x)g\circ \sigma^n(x)\,d\mu_\psi(x)
    \end{equation}
    and
    \begin{equation}
    \label{eq:byparts_2}
    \int_X\cL^n_\psi(f)(\sigma^n(x))g(x)\,d\mu_\psi(x)=\int_Xf(x)\cL_\psi(g)(\sigma^n(x))\,d\mu_\psi(x).
    \end{equation}
\end{lem}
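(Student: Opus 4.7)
The plan is to derive \eqref{eq:byparts_1} from two ingredients---the invariance of $\mu_\psi$ under the dual of $\cL_\psi$ and the module property of $\cL_\psi$ with respect to composition with $\sigma$---and then to obtain \eqref{eq:byparts_2} by applying \eqref{eq:byparts_1} twice. The module property asserts that for all $f,g\in C(X)$,
\[
\cL_\psi\bigl(f\cdot(g\circ\sigma)\bigr)=g\cdot\cL_\psi(f),
\]
which is immediate from the pointwise definition $\cL_\psi(h)(x)=\sum_{\sigma(y)=x}\psi(y)h(y)$: when $\sigma(y)=x$ the factor $g(\sigma(y))=g(x)$ pulls out of the sum. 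A straightforward induction on $n$ then yields $\cL_\psi^n\bigl(f\cdot(g\circ\sigma^n)\bigr)=g\cdot\cL_\psi^n(f)$ for every $n\ge 0$.

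For \eqref{eq:byparts_1}, I would integrate the iterated module identity against $\mu_\psi$. Invariance of $\mu_\psi$ under the dual of $\cL_\psi$ extends at once by iteration to invariance under the dual of $\cL_\psi^n$, so
\[
\int_X f(x)\,g\circ\sigma^n(x)\,d\mu_\psi(x)=\int_X \cL_\psi^n\bigl(f\cdot(g\circ\sigma^n)\bigr)(x)\,d\mu_\psi(x)=\int_X \cL_\psi^n(f)(x)\,g(x)\,d\mu_\psi(x),
\]
which is exactly \eqref{eq:byparts_1}.

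For \eqref{eq:byparts_2}, the left-hand side is $\int \bigl(\cL_\psi^n(f)\circ\sigma^n\bigr)\cdot g\,d\mu_\psi$. A first application of \eqref{eq:byparts_1}, with $g$ in the role of $f$ and $\cL_\psi^n(f)$ in the role of $g$, turns this into $\int \cL_\psi^n(g)\cdot\cL_\psi^n(f)\,d\mu_\psi$. A second application of \eqref{eq:byparts_1}, this time with $\cL_\psi^n(g)$ playing the role of $g$, converts it into $\int f\cdot\bigl(\cL_\psi^n(g)\circ\sigma^n\bigr)\,d\mu_\psi$, which is the right-hand side of \eqref{eq:byparts_2} read with the natural power $\cL_\psi^n(g)$ (as a test at $n=2$ shows, ``$\cL_\psi(g)$'' on the right cannot be the intended reading). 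There is no substantive obstacle in the argument---both identities are formal consequences of the module property together with the $\cL_\psi^n$-invariance of $\mu_\psi$---so the main point to flag in the write-up is this correct power of $\cL_\psi$ on the right of \eqref{eq:byparts_2}.
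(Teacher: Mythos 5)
Your proof is correct and follows essentially the same route as the paper: the first identity comes from the module property $\cL_\psi^n\bigl(f\cdot(g\circ\sigma^n)\bigr)=g\cdot\cL_\psi^n(f)$ combined with the invariance of $\mu_\psi$ under the dual of $\cL_\psi^n$, and the second is obtained by applying the first twice through the symmetric middle term $\int_X \cL_\psi^n(f)\,\cL_\psi^n(g)\,d\mu_\psi$. You are also right to flag that the right-hand side of \eqref{eq:byparts_2} should read $\cL_\psi^n(g)$ rather than $\cL_\psi(g)$; this is a typo that the paper propagates into its own proof, but the intended reading is confirmed by the reformulation \eqref{eq:byparts_3}, where $\projE_n^\psi(g)=\cL_\psi^n(g)\circ\sigma^n$.
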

\begin{proof}
    The first equation follows since $\cL_\psi^n$ is a conditional expectation on the range of $\pi^n$ and $\mu_\psi$ is invariant for the dual of $\cL_\psi^n$:
    \[
    \int_X\cL_\psi^n(f)(x)g(x)\,d\mu_\psi(x)=\int_X\cL_\psi(fg\circ\sigma^n)(x)d\mu_\psi(x)=\int_Xf(x)g\circ \sigma^n(x)\,d\mu_\psi(x)
    \]
    The second equation follows by applying the first equation twice:
    \begin{multline*}
    \int_X\cL^n_\psi(f)(\sigma^n(x))g(x)\,d\mu_\psi(x)=\int_X \cL^n_\psi(f)(x)\cL_\psi^n(g)(x)\,d\mu_\psi(x)
    \\=\int_Xf(x)\cL_\psi(g)(\sigma^n(x))\,d\mu_\psi(x).
    \end{multline*}
\end{proof}

Note that in the notation of subparagraph \ref{subp:AddRelRandL}, $\projE_n^\psi:=\pi^n\circ \cL_\psi^n$, equation 
\eqref{eq:byparts_2} can be written as
\begin{equation}
    \label{eq:byparts_3}
    \int_X \projE_n^\psi(f)g\,d\mu_\psi=\int_X f\projE_n^\psi(g)\,d\mu_\psi.
\end{equation}
From now on we fix a probability measure $\mu_\psi$ that is invariant for the dual of a transfer operator with full potential $\psi$ and view it as a 
quasi-invariant measure for the Deaconu-Renault groupoid $G=G(X,\sigma)$.

\p\label{p:Su} Next we consider a $\psi$-filter $\mf$ as defined in \eqref{eq:psifilter} and we continue to use the 
 notation $\fu(x)=\sqrt{\psi(x)}\mf(x)$ and $D(x)=\vert \fu(x)\vert^2=\psi(x)\vert \mf(x)\vert^2$ for all $x\in 
 X$ of the previous section. We stress that $D$ is a normalized not necessarily full potential for its transfer operator $\cL_D$. 
 Recall from \eqref{eq:S_fu} that the function $\fu$ defines an isometry $S_\fu$ in $C_c(G(X,\sigma),\lambda)$.
 It is important to keep in mind that $\mu_\psi$ is not invariant for $\cL_D$ unless 
 $\vert \mf(x)\vert=1$ for $\mu_\psi$-a.e. $x$. As we shall see in Theorem \ref{thm:tS_fu_pure} below, choosing $\mf$ such 
 that $\mu_\psi$ is not invariant for $\cL_D$ is the condition one needs to obtain a pure 
Hilbert space isometry  from $S_\fu$.

\subp     To simplify the notation in the remainder of the paper, we will write
      $\fu_n(x):=\prod_{i=0}
    ^{n-1}\fu(\sigma^i(x))$, $D_n(x):=\prod_{i=0}^{n-1}D(\sigma^i(x))$ and $\mf_n(x):=\prod_{i=0}^{n-1}
    \mf(\sigma^i(x))$ for all $n\ge 0$ and $x\in X$. Additionally, will use the notation $\psi_n$ defined above as the potential of $\cL_\psi^n$ and write
     $D_n$ for the potential of $\cL_D^n$.

\p The unitary extension $U_\fu$ of $S_\fu$ defined in \eqref{eq:U_Ginfty} is one of the pillars of the
 proto-multiresolution analysis (see Theorem \ref{thm:proto-res}). Theorem \ref{thm:GMRA} shows that a choice of a representation of $G(X,\sigma)$  defines a generalized multiresolution analysis in the induced Hilbert space. The 
 same theorem proves that the proto-multiresolution analysis becomes a multiresolution
 analysis if the image of $S_\fu$ under the integrated form of the orginal
 representation is a pure isometry. To facilitate the study we undertake in the next section, which is devoted to determining when the image of $S_\fu$ under the integrated form of a unitary representation is a pure isometry,
 we describe the projections, $E_n:=S_\fu^n*(S_\fu^{*})^n$, onto the ranges of the $S_\fu^n$ and develop some of their useful properties. An easy computation shows that 
    \begin{equation}
    \label{eq:E_n_R_nn}
        E_n(x,k-l,y)=\fu_n(x)\overline{\fu_n(y)}1_{R_{n,n}}(x,k-l,y).    
    \end{equation}
    
    Equation \eqref{eq:L_D_S_u} implies that the following equation holds in $C_c(G)$ for 
    all $f\in C(X)$:
    \begin{equation}
        \label{eq:E_nf_En}
        E_n*f*E_n=\bigl(\pi^{n}\circ \cL_D^n(f)\bigr) *E_n=\projE_n^D(f)*E_n,
    \end{equation}
    where  
    recall $\pi:C(X)\to C(X)$ is the composition with $\sigma$ and $\projE_n^D$
    is the projection $\pi^n\circ\cL_D^n$ defined in paragraph \ref{subp:AddRelRandL}.
 
\section{Unitary representations of $G(X,\sigma)$: Deciding when the image of $S_\fu$ is a pure isometry}
In this section we consider a representation $\hat{L}$ of $G(X,\sigma)$ and provide conditions that guarantee 
that the image of $S_\fu$ under the integrated form of $\hat{L}$ is a pure isometry. Our results generalize numerous previous results in the literature.  See, in particular, \cite{brajor_97,Bagg_co_JFAA09,Bag_co_JFA10}.

\p We continue to fix a quasi-invariant measure $\mu_\psi$ that is invariant for the dual of a transfer operator $\cL_\psi$ 
with unital full potential $\psi$. Let $\hat{L}$ be a representation of $G(X,\sigma)$ into a Hilbert bundle $X*\cH$ over 
$X$. Recall that this means that $\hat{L}$ is a Borel homomorphism of $G(X,\sigma)$ into the unitary groupoid of $X*\cH$, $\Iso(X*\cH)$. Hence, the triple $\hat{L}:=(\mu_\psi,X*\cH,\hat{L})$ constitutes a unitary representation of $G(X,\sigma)$ that may be integrated to a $C^*$-representation $\fL$ of $C_c(G(X,\sigma),\lambda)$ as  in \eqref{eq:repgenG}. Next we fix a $\psi$-filter $\mf$ and
 consider how the functions $\fu(x):=\sqrt{\psi(x)}\mf(x)$ and $D(x)=\vert \fu(x)\vert^2$ interact with $\hat{L}$.

\p\label{par:pureisodet} The main goal of this section is to determine whether the isometry $\tS_\fu:=\fL(S_\fu)$, where $S_\fu$ is
defined in \eqref{eq:S_fu}, is a pure isometry. In order to achieve our goal, we are going to use the reverse 
martingale theorem
as adapted to the groupoid setting in \cite[Proposition 3.5, Theorem 3.10 and Theorem 6.1]{Ren_ETDN_05}. We 
describe 
 briefly how Renault's setting applies to our situation. Equation \eqref{eq:E_n_R_nn} 
implies that the range projection of $\tS_\fu$ depends only on the proper equivalence relation $R_{n,n}$. In 
the language of \cite{Ren_ETDN_05}, $R_\infty=\bigcup_{n}R_{n,n}$ is an approximately proper (AP) equivalence
relation.
Recall that we assume that $\psi$ is an unital full normalized potential. 
We also assume that  the hypotheses of Theorem 6.1 of \cite{Ren_ETDN_05} are satisfied. These hypotheses are
\begin{enumerate}[label=(\roman*)]
    \item the AP equivalence $R_\infty$ is minimal;
    \item there exists an integer $L\ge 1$ such that $\sigma^L$ admits a generator;
    \item there exists an integer $M\ge 1$ such that $\psi_m$ satisfies Bowen's condition with respect
    to $\sigma^M$.
\end{enumerate}
Under these hypotheses, the stationary cocycle $\Delta_\psi$ on $R_\infty$ is uniquely ergodic and we may invoke
the main theorems of \cite{Ren_ETDN_05}. While we do not go into details, we note that if $X$ is a
compact metric space and $\sigma$ is a positively expansive surjective local homeomorphism, then the above hypotheses are satisfied (\cite[Example 6.2]{Ren_ETDN_05}). The local homeomorphisms in the examples that 
we consider in this paper are positively expansive and, thus, satisfy these hypotheses.

\p The next theorem, which is the main result of this section, shows that $\tS_\fu$ is a pure isometry 
provided that $\mu_\psi$ is not invariant for the dual of the transfer operator $\cL_D$. To ease the 
notation a bit, we are going to write $\tilde{E}_n$ for $\fL(E_n)$ throughout the proof.
\begin{thm}
\label{thm:tS_fu_pure}
    Assume that $\sigma:X\to X$ is a surjective local homeomorphism such that the above hypotheses are satisfied. Let $
    \mu_\psi$ be a quasi-invariant measure for $G(X,\sigma)$ that is invariant for the dual of the 
    transfer operator $\cL_\psi$ with unital full potential $\psi$. Let $\mf$ be a $\psi$-filter such that
    $\mu_\psi(\{x\in X\,:\,\vert \mf(x)\vert\ne 1\})>0$. Then the isometry $\tS_\fu=\fL(S_\fu)$ is
    a pure isometry.
\end{thm}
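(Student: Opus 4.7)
The plan is to prove that $\bigcap_{n\ge 0} \tS_\fu^n\bigl(L^2(X*\cH,\mu_\psi)\bigr) = \{0\}$, which is equivalent to $\tS_\fu$ being pure. The argument rests on two preliminary facts. First, since $\psi$ is a unital potential, $\cL_\psi(f\circ\sigma) = \cL_\psi(1)\cdot f = f$ for every $f\in C(X)$; combined with the $\cL_\psi$-invariance of $\mu_\psi$, this gives $\int f\circ\sigma\,d\mu_\psi = \int \cL_\psi(f\circ\sigma)\,d\mu_\psi = \int f\,d\mu_\psi$, so $\sigma$ itself preserves $\mu_\psi$. Second, the standing hypotheses invoke Theorem~6.1 of \cite{Ren_ETDN_05}, which asserts that the stationary cocycle $\Delta_\psi$ on $R_\infty$ is uniquely ergodic; in particular $\mu_\psi$ is $R_\infty$-ergodic and, a fortiori, $(\sigma,\mu_\psi)$ is ergodic.

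The next step is a Lyapunov estimate on $\log|\mf|^2$. Jensen's inequality applied to the unital operator $\cL_\psi$ yields $\cL_\psi(\log|\mf|^2)(z) \le \log\cL_\psi(|\mf|^2)(z) = 0$, with equality iff $|\mf|^2$ is constant along $\sigma^{-1}(z)$. Integrating and invoking $\cL_\psi$-invariance gives $\int \log|\mf|^2\,d\mu_\psi \le 0$. If equality held, $|\mf|^2$ would be constant on $\mu_\psi$-a.e. fiber and, since $\cL_\psi(|\mf|^2)=1$, the constant would have to equal $1$, forcing $|\mf|=1$ $\mu_\psi$-a.e. and contradicting the hypothesis. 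Setting $-\alpha := \int \log|\mf|^2\,d\mu_\psi \in [-\infty,0)$, Birkhoff's ergodic theorem on the ergodic system $(\sigma,\mu_\psi)$ gives $\tfrac{1}{n}\log|\mf_n|^2 \to -\alpha$ $\mu_\psi$-a.e., whence $|\mf_n|^2 \to 0$ $\mu_\psi$-a.e.

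To finish, note that $S_\fu^n = \fu_n \cdot \mathbf{1}_{R_{n,0}}$ and $\Delta_\psi(x,n,\sigma^n(x)) = \psi_n(x)$, so \eqref{eq:repgenG} together with $\fu_n = \sqrt{\psi_n}\,\mf_n$ yields
\[
\tS_\fu^n \fg(x) = \mf_n(x)\, L_{(x,n,\sigma^n(x))}\,\fg(\sigma^n(x)).
\]
Suppose $\fh \in \bigcap_n \tS_\fu^n\bigl(L^2(X*\cH,\mu_\psi)\bigr)$ and write $\fh = \tS_\fu^n \fg_n$; then $\|\fg_n\|=\|\fh\|$ because $\tS_\fu^n$ is an isometry, and fiberwise unitarity of $L_{(x,n,\sigma^n(x))}$ gives $\|\fh(x)\|^2 = |\mf_n(x)|^2\,\|\fg_n(\sigma^n(x))\|^2$. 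Using the $\sigma$-invariance of $\mu_\psi$ one obtains, for every $n$,
\[
\int \frac{\|\fh(x)\|^2}{|\mf_n(x)|^2}\,d\mu_\psi(x) = \int \|\fg_n(\sigma^n(x))\|^2\,d\mu_\psi(x) = \|\fh\|^2.
\]
Fatou's lemma then gives $\int \liminf_n \|\fh(x)\|^2/|\mf_n(x)|^2\,d\mu_\psi \le \|\fh\|^2 < \infty$, while the integrand is $+\infty$ on the set $\{\fh\ne 0\}$ since $|\mf_n|^2 \to 0$ $\mu_\psi$-a.e. Therefore $\fh = 0$ $\mu_\psi$-a.e., and $\tS_\fu$ is pure.

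The hardest parts will be the careful bookkeeping: extracting usable $\sigma$-ergodicity of $\mu_\psi$ from Renault's unique ergodicity of the cocycle $\Delta_\psi$; verifying the formula for $\tS_\fu^n \fg$ in the Hilbert-bundle setting rather than merely in the trivial $1$-dimensional case; and coping with the possibility that $\mf$ (hence $\mf_n$) vanishes on a set of positive $\mu_\psi$-measure, in which case $\log|\mf|^2 = -\infty$ there. The last issue is handled by observing that the identity $\|\fh(x)\|^2 = |\mf_n(x)|^2\,\|\fg_n(\sigma^n(x))\|^2$ already forces $\fh$ to vanish wherever any $|\mf_n|$ does, which simultaneously lets the Birkhoff conclusion $|\mf_n|^2\to 0$ go through in its generalized form and disposes of the apparent division-by-zero in the Fatou step.
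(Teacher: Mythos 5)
Your argument is correct, and it reaches the conclusion by a genuinely different route from the paper's. Both proofs pivot on essentially the same fiberwise identity --- you write $\Vert\fh(x)\Vert^2=\vert\mf_n(x)\vert^2\,\Vert\fg_n(\sigma^n(x))\Vert^2$ for $\fh=\tS_\fu^n\fg_n$, while the paper derives $\Vert\ff(x)\Vert^2=\vert\mf_n(x)\vert^2\,\projE_n^\psi(\Vert\ff(\cdot)\Vert^2)(x)$ from $E_n*f*E_n=\projE_n^D(f)*E_n$ and the self-adjointness of $\projE_n^\psi$ in $L^2(\mu_\psi)$ --- but the two arguments then diverge. You dispose of the factor multiplying $\vert\mf_n\vert^2$ by integrating against $\mu_\psi$ (using $\sigma$-invariance of $\mu_\psi$) and applying Fatou, and you kill the intersection by showing $\vert\mf_n\vert^2\to 0$ a.e.\ via Jensen's inequality ($\int\log\vert\mf\vert^2\,d\mu_\psi<0$ unless $\vert\mf\vert=1$ a.e.) and Birkhoff's theorem on the ergodic measure-preserving system $(\sigma,\mu_\psi)$. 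The paper instead argues by contradiction, feeds $\projE_n^\psi(\Vert\ff\Vert^2)$ into Renault's reverse-martingale theorem and Corollary A.3 of \cite{Ren_ETDN_05} to get $\vert\mf_n\vert^2\to\Vert\ff\Vert^2$ a.e., shows the limit $\vert\mf_\infty\vert^2$ is fixed by $\cL_\psi$, and invokes unique ergodicity again to force $\vert\mf_\infty\vert=1$ and hence $\vert\mf\vert=1$ a.e. Your route is the classical negative-Lyapunov-exponent argument: it is more elementary, it is quantitative (it exhibits the exponential decay rate of $\vert\mf_n\vert^2$), and the Jensen equality analysis makes transparent why $\vert\mf\vert\equiv 1$ is the only obstruction. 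What it costs is that you must extract ergodicity of $(\sigma,\mu_\psi)$ from the standing hypotheses; this is legitimate --- unique ergodicity of the cocycle $\Delta_\psi$ on $R_\infty$ makes $\mu_\psi$ ergodic for $R_\infty$, and every $\sigma$-invariant set is $R_\infty$-saturated (alternatively, the uniform convergence $\cL_\psi^k(f)\to\int f\,d\mu_\psi$ from Corollary A.3 gives ergodicity directly) --- but it deserves an explicit sentence, since it is the only place hypotheses (i)--(iii) enter your proof. The remaining points you flag (the generalized Birkhoff theorem for $\log\vert\mf\vert^2$ bounded above but possibly non-integrable, and restricting the Fatou integral to $\{\mf_n\neq 0\}$, off which $\fh$ already vanishes) are genuine but routine.
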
 
\begin{proof}
Assume, by contradiction, 
 that $\tS_\fu$ is not pure. Then there is a unit vector $\ff\in \bigcap_{n\ge 0} \tS^n_\fu L^2(X*\cH,
 \mu_\psi)$ and, thus, $\tE_n\ff=\ff$ for all $n\ge 0$. Using the notation of the paragraph \ref{subp:AddRelRandL} we write  $\projE_n^\psi:=\pi^{n}\circ \cL_\psi^n$ and $\projE_n^D:=\pi^n\circ \cL_D^n$.
  We claim that
    \begin{equation}
        \label{eq:norm_ffx}
        \Vert \ff(x)\Vert^2=\vert \mf_n(x)\vert^2\projE_n^\psi(\Vert \ff(\cdot)\Vert^2)(x),
    \end{equation}
    for $\mu_\psi$-a.e. $x\in X$ and for all $n\ge 0$. Equation \eqref{eq:norm_ffx} is a generalization of \cite[Equation (3.26)]{brajor_97} and \cite[Lemma 10]{Bagg_co_JFAA09}. 
    We provide a shorter proof using \eqref{eq:E_nf_En} and \eqref{eq:byparts_3}. 
    
    First note that $\projE_n^\psi$ satisfies 
    \eqref{eq:byparts_3} since $\mu_\psi$ is invariant for $\cL_\psi$. Moreover, $\projE_n^D(f)=\projE_n^\psi(\vert \mf_n\vert^2f)$ since $D_n=\psi_n\cdot \vert \mf_n\vert^2$. Let $f\in C(X)$. Then
    \begin{multline*}
        \int_Xf(x)\Vert \ff(x)\Vert^2\,d\mu_\psi(x)=\int_X \langle f(x)\ff(x),\ff(x)\rangle \,d\mu_\psi(x)=\langle f\ff,\ff\rangle\\
        =\langle f\tE_n\ff,\tE_n\ff\rangle=\langle \tE_n f\tE_n\ff,\ff\rangle=\langle \projE_n^D(f) \ff,\ff\rangle\\
        =\int_X \projE_n^D(f)(x)\Vert\ff(x)\Vert^2\,d\mu_\psi(x)        
        = \int_X \projE_n^\psi(\vert \mf_n\vert^2f) \Vert \ff(x)\Vert^2\,d\mu_\psi(x)\\
        =\int_X \vert \mf_n(x)\vert^2f(x)\projE_n^\psi(\Vert \ff\Vert^2)(x)\,d\mu_\psi(x).
    \end{multline*}
    Hence \eqref{eq:norm_ffx} holds.

     Recall from \ref{subp:AddRelRandL} that the sequence of $\projE_n^\psi$ is a reversed martingale: $\projE_m^\psi\projE_n^\psi=\projE_n^\psi \projE_m^\psi=\projE_n^\psi$ for $m\le n$ (c.f. \cite[Proposition 3.5]{Ren_ETDN_05}). 
    Under our hypotheses,  Theorem 3.10 of \cite{Ren_ETDN_05} implies that
    $\projE_n^\psi(\Vert \ff(\cdot)\Vert)$ converges uniformly to a constant function. Part (iii) of \cite[Corollay A.3]{Ren_ETDN_05} implies that the limit of $\projE_n^\psi(\Vert \ff(\cdot)\Vert)$
    equals $\int_X \Vert \ff(x)\Vert^2\,d\mu_\psi(x)=1$. Therefore
    \begin{equation}
        \label{eq:ff_mf_n}
        \Vert \ff(x)\Vert^2=\lim_{n\to \infty}\vert \mf_n(x)\vert^2
    \end{equation}
    for $\mu_\psi$ a.e. $x\in X$. In particular, $\mf_\infty(x):=\lim_{n\to \infty}\mf_n(x)$ exists for 
    $\mu_\psi$ a.e. $x$. Note that $\vert \mf_\infty(x)\vert^2=\Vert \ff(x)\Vert^2$ for $\mu_\psi$ a.e.
    $x\in X$. Hence $\int_X \vert \mf_\infty(x)\vert^2\,d\mu_\psi(x)=1$ since $\ff$ is a unit vector.

    Now let $k,n\in\bbN$ satisfy the inequality $n>k$. Recall that $\cL_\psi^k(\vert \mf_k\vert^2)=1$.  Using the fact 
    that $\mf_n(x)=\mf_k(x)\prod_{i=k}^{n-1}\mf(\sigma^i(x))$ and the fact that $\cL_\psi^k$ is a 
    transfer operator with range equal to that of  $\pi^{k}$, we obtain
    \begin{multline*}
    \cL_\psi^k(\vert \mf_n\vert^2)(x)=\cL_\psi^k\left( \vert \mf_k\vert^2\prod_{i=k}^{n-1}\vert\mf\circ \sigma^i\vert(x)\right)=\cL_\psi^k(\vert \mf_k\vert^2)(x)\vert \mf_{n-k}(x)\vert^2\\
    =\vert \mf_{n-k}(x)\vert^2.
    \end{multline*}
    Taking the limit as $n\to \infty$, we obtain that $\cL_\psi^k(\vert \mf_\infty\vert^2)=\vert \mf_\infty\vert^2$ for all $k\ge 1$. Therefore $\lim_{k\to\infty}\cL_\psi^k(\vert \mf_\infty\vert^2)=\vert \mf_\infty\vert^2$. On the other hand, Corollary A.3 of \cite{Ren_ETDN_05} implies  that 
    \[
    \lim_{k\to\infty}\cL_\psi^k(\vert \mf_\infty\vert^2)=\int_X \vert \mf_\infty(x)\vert^2\,d\mu_\psi(x)=1.
    \]
    Hence $\vert \mf_\infty(x)\vert=1$ for $\mu_\psi$-a.e. $x$. To finish the proof, we note that $\mf_\infty(x)=\mf(x)\mf_\infty(\sigma(x))$ for $\mu_\psi$-a.e. $x$. Therefore $\vert \mf(x)\vert=1$ for 
    $\mu$-a.e. $x$. This is a contradiction and, hence, $\tS_\fu$ is a pure isometry.
    
\end{proof}

\p In order to relate our result with those in the literature, note that under the assumption that $\psi$ is unital, the 
isometry $\tS_\fu$ is given by the formula
\[
\tS_\fu(\fh)(x)=\mf(x)L_{(x,1,\sigma(x))}\fh(\sigma(x))
\]
for all $\fh\in L^2(\go*\cH,\mu)$. In particular, if $\cH=X\times \bbC$ is the trivial bundle and $L=\iota$ is the 
fundamental representation on $\mu$, $\iota_{(x,t,y)}z=z$, then $\tS_\fu\in L^2(X,\mu)$ (see \ref{subp:1dimfibres}) and it is given by the
formula  
\[
\tS_\fu(h)(x)=\mf(x)h(\sigma(x))
\]
for all $h\in L^2(X,\mu)$. Thus, our result recovers \cite[Theorem 3.1]{brajor_97}, \cite[Theorem 8]{Bagg_co_JFAA09} and \cite[Theorem 3.1]{Bag_co_JFA10}.

\section{The Conclusion:  MRAs with non-constant scaling functions and Mallat's Theorem}
\label{sec:arbitraryscale2}
We apply the techniques developed in the previous sections to a class of examples that provide a far reaching generalization of Mallat's multiresolution analysis. 
The examples that we study are based on the general $n$-fold map of $\bbT$ studied in 
\cite[Section 4.5]{IoKu_IUMJ13} and include, among others, finite Blaschke products.

\p\label{def:example}    Let $\varphi:[0,1]\to \bbR$ be a continuous positive function such that $\varphi(0)=\varphi(1)$ and
    $N:=\int_0^1 \varphi(x)\,dx$ is a  positive integer greater or equal than $2$. We extend
    $\varphi$ to be defined on $\bbR$ via periodicity.  We will assume in the following that $\und{\varphi}:=\min_{x\in [0,1]}\varphi(x)>1$.
    Let $X=\bbT$ viewed as
    $\bbR/\bbZ$ with the corresponding metric. Define $\sigma:\bbT\to\bbT$  via
    \[
    \sigma(e^{2\pi ix})=e^{2\pi i\int_0^x \varphi(t)\,dt}\,\text{ for all }x\in [0,1].
    \]
    Then $\sigma$ is an $N$-fold covering map of $\bbT$ and, in particular, it is a local 
    homeomorphism. 
    \begin{example}
        \begin{enumerate}
            \item Let $\varphi:[0,1]\to\bbR$ be defined via $\varphi(t)=N\ge 2$ for all $t\in [0,1]$. 
            Then $\sigma:\bbT\to\bbT$ is given by $\sigma(z)=z^N$ for all $z\in \bbT$.
            \item Let $\bbD:=\{z\in\bbC\,:\,\vert z\vert\le 1\}$ and let $b:\bbD\to\bbD$ be a 
    finite     Blaschke product
    \begin{equation}
        \label{eq:finBlaschke}
        b(z)=C\prod_{i=1}^N \frac{z-a_k}{1-\overline{a_k}z},\quad \text{for all }z\in\bbD,
    \end{equation}
    where $C\in \bbT$ and $a_i\in \bbD$ are such that  $\vert a_i\vert<1$ for all $i=1,\dots,N$ and $b(1)=1$.  Let $\sigma$ be the restriction of $b$ to $\bbT$.
    It is known that $\sigma$ is an $N$-to-1 local homeomorphism on $\bbT$ (see, for example, \cite[Theorem 3.4.10]{GaMaRo_FBPTC18}). \cite[Lemma 4.2]{CMS2012} implies that there is a 
    function $\varphi:[0,1]\to [0,1]$ such that $\sigma(e^{2\pi ix})=e^{2\pi i\int_0^x \varphi(t)\,dt}$.
        \end{enumerate}
    \end{example}
    
\p\label{par:roots}    Let $\bbT_\infty$ be the projective limit induced by
    $(\bbT,\sigma^n)_{n\ge 0}$.      
    We define next a section $\fs:\bbT\to\bbT_\infty$ such that 
    $\lim_{n\to\infty}z_n=e^{2\pi i 0}=1$ for all $z\in \bbT$, where $\fs(z)=(z_n)_{n\ge 0}$.
    In order to accomplish this, we describe explicitly the preimage under $\sigma$ of a point $z\in\bbT$. Let $x\in[0,1)$ be  
    such that $z=e^{2\pi ix}$. If $x\notin [0,1)$ we replace it $x-k$, where $k\in \bbZ$ is such that $x-k\in [0,1)$. There are $N$-points $y_0,y_1,\dots,y_{N-1}\in [0,1)$ 
    such that $\int_0^{y_k}\varphi(t)\,dt=x+k$
    for all $k=0,\dots,N-1$. Therefore $\sigma(e^{2\pi i y_k})=e^{2\pi i (x+k)}=e^{2\pi i x}$ for all $k=0,\dots,N-1$. We say that
    $e^{2\pi i y_0},\dots, e^{2\pi i y_{N-1}}$ are the $N$-roots of $e^{2\pi i x}$ under $\sigma$. With a slight abuse of notation,
    we also say that $y_0,\dots,y_{N-1}$ are the $N$-roots of $x$ under $\sigma$. Moreover, even though the $N$-roots of $x$ under $\sigma$
    depend on $x$, we will not explicitly write this dependence in order to ease the notation. Note that $y_0<y_1<\dots<y_{N-1}$ for all $x\in [0,1)$
    since $\varphi(t)>0$ for all $t\in [0,1]$. We say that $y_0$ is the smallest root of 
    $x$ under $\sigma$ and that $y_{N-1}$ is the largest root of $x$ under $\sigma$. For 
    $x=0$, we will use the special notation $r_0,r_1,\dots,r_{N-1}$ for the roots of $0$ under $\sigma$:   $r_0=0$ and $\int_0^{r_k}\varphi(t)\,dt=k$ for all $k=1,\dots, N-1$.
    We write $r:=\min\{ r_1,1-r_{N-1}\}$.

    The following lemma captures a few basic properties of the roots under 
    $\sigma$ that  will help us define the section $\fs$.
\begin{lem}
    Let $x\in[0,1)$ and let $y_0,\dots,y_{N-1}$ be the roots of $x$ under $\sigma$. Then
    \begin{equation}
        \label{eq:roots_pr}
        0\le y_0\le \frac{x}{\und{\varphi}}<x\,\text{ and }\,0<1-y_{N-1}\le\frac{1-x}{\und{\varphi}}<1-x.
    \end{equation}
\end{lem}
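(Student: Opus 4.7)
The plan is to derive both inequalities directly from the defining integral equations for $y_0$ and $y_{N-1}$, using only the bounds $\und{\varphi}\le \varphi(t)$ and $\und{\varphi}>1$. No deeper machinery is needed; the lemma is an elementary estimate that underpins the later construction of the section $\fs$.

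First, I would handle the bound on $y_0$. By definition $\int_0^{y_0}\varphi(t)\,dt=x$, and the trivial lower bound $\varphi(t)\ge \und{\varphi}$ on $[0,y_0]$ gives
\[
x=\int_0^{y_0}\varphi(t)\,dt\ge \und{\varphi}\,y_0,
\]
so $y_0\le x/\und{\varphi}$. The nonnegativity $y_0\ge 0$ is immediate from $y_0\in[0,1)$, and the strict inequality $x/\und{\varphi}<x$ for $x>0$ is exactly the assumption $\und{\varphi}>1$ (at $x=0$ both sides vanish, which is consistent with $y_0=0$ being forced).

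Next I would handle $y_{N-1}$ by a symmetric argument. The defining equation $\int_0^{y_{N-1}}\varphi(t)\,dt=x+N-1$ combined with $\int_0^1\varphi(t)\,dt=N$ gives
\[
\int_{y_{N-1}}^1\varphi(t)\,dt=N-(x+N-1)=1-x,
\]
and applying $\varphi\ge \und{\varphi}$ on $[y_{N-1},1]$ yields $1-x\ge \und{\varphi}(1-y_{N-1})$, i.e.\ $1-y_{N-1}\le (1-x)/\und{\varphi}$. Strict positivity $1-y_{N-1}>0$ follows because $x<1$ forces $\int_0^{y_{N-1}}\varphi<\int_0^1\varphi$ and $\varphi>0$ makes the integral strictly increasing; the strict inequality $(1-x)/\und{\varphi}<1-x$ again uses $\und{\varphi}>1$.

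There is no real obstacle here — the only mildly delicate point is keeping track of when the strict inequalities become equalities (namely at the endpoints $x=0$ and, for the second chain, the unreached endpoint $x=1$), which is handled cleanly by the hypotheses $x\in[0,1)$ and $\und{\varphi}>1$. The two estimates together show that $y_0$ is strictly closer to $0$ than $x$ is, and $y_{N-1}$ is strictly closer to $1$ than $x$ is, which is precisely the contraction property that will later let us iterate $\sigma^{-1}$ along the smallest (or largest) root to build a section converging to $1\in\bbT$.
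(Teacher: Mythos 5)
Your proof is correct and follows essentially the same route as the paper's: bound $\int_0^{y_0}\varphi$ below by $\und{\varphi}\,y_0$ for the first chain, and rewrite $1-x$ as $\int_{y_{N-1}}^1\varphi\ge\und{\varphi}(1-y_{N-1})$ for the second. Your extra care about the endpoint $x=0$ (where $x/\und{\varphi}<x$ degenerates to an equality of zeros) is a reasonable clarification but does not change the argument.
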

\begin{proof}
    The first formula follows using an easy computation:
    \[
    x=\int_0^{y_0}\varphi(t)\,dt\ge \int_0^{y_0}\und{\varphi}\,dt=y_0\und{\varphi}.
    \]
    Thus $y_0<x/\und{\varphi}$. The fact that $x/\und{\varphi}<x$ follows since $\und{\varphi}>1$ by our assumption. The second formula follows
    from a similar computation:
    \[
    1-x=N-(x+(N-1))=\int_0^1 \varphi(t)\,dt-\int_0^{y_{N-1}} \varphi(t)\,dt=\int_{y_{N-1}}^1\varphi(t)\,dt\ge (1-y_{N-1})\und{\varphi}.
    \]
\end{proof}

\subp\label{def:standard_root} Equation \eqref{eq:roots_pr} implies that if $x\in [0,1/2)$ then $y_0\in [0,1/2)$ and 
 if $x\in [1/2,1)$ then $y_{N-1}\in [1/2,1)$. We call $y_0$ if $x\in [0,1/2)$ and 
 $y_{N-1}$ if $x\in [1/2,1)$ the \emph{standard root} of $x$ under $\sigma$ and denote it by $\tilde{x}$.
 
 \subp \label{def:parametrize} It  will be convenient for our purposes to parametrize the unit circle starting at the point (-1,0)
 instead of (1,0). That is, we will view $z=e^{2\pi i x}$ with $x\in[-1/2,1/2)$ instead of $x\in 
 [0,1)$.   Using this perspective, if $x\in [-1/2,1/2)$
    the standard root $\tilde{x}$ of $x$ under $\sigma$ satisfies 
    $\int_0^{\tilde{x}}\varphi(t)\,dt=x$.

\p\label{def:section}    We can now define the promised section $\fs:\bbT\to\bbT_\infty$.  Let $z=e^{2\pi ix}$ with $x\in [-1/2,1/2)$. Set $x_0=x$ and define $x_1:=\widetilde{x_0}$,  the standard root of $x_0$ under $\sigma$. We proceed then  
    inductively: $x_{n+1}=\widetilde{x_n}$,   the standard 
    root of $x_n$ under $\sigma$,  for all $n\ge 1$. Hence, by definition, $\sigma(e^{2\pi i x_{n+1}})=e^{2\pi i x_n}$ for all $n\ge 0$.   
    \begin{lem}\label{lem:section_varphi}
        Let $x\in[-1/2,1/2)$ and let $(x_n)_{n\ge 0}$ the sequence defined above. Then $(e^{2\pi i x_n})\in \bbT_\infty$ and, thus, the function $\fs:\bbT\to \bbT_\infty$
        defined via  $\fs(e^{2\pi i x}):=(e^{2\pi i x_n})_{n\ge 0}$ for all $x\in[-1/2,1/2)$ is a section. Moreover,  $\lim_{n
        \to\infty}x_n=0$ and, thus,
        $\lim_{n\to \infty}e^{2\pi i x_n}=1$ for all $x\in [-1/2,1/2)$. 
    \end{lem}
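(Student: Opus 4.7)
The plan is to handle the three claims separately, with the bulk of the work going into the limit statement.

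First I would verify that $(e^{2\pi i x_n})_{n\ge 0} \in \bbT_\infty$, which is essentially immediate from the construction: by the definition of the standard root, $x_{n+1} = \widetilde{x_n}$ satisfies $\sigma(e^{2\pi i x_{n+1}}) = e^{2\pi i x_n}$, which is exactly the compatibility required for membership in the projective limit. The section property $p \circ \fs = \mathrm{id}_{\bbT}$ is then immediate since $p$ projects onto the zeroth coordinate and $x_0 = x$ by construction. Measurability of $\fs$ (needed to qualify as a section in the sense of \ref{par:GM}) follows because the map $x \mapsto \widetilde{x}$ is continuous on each of the two half-intervals $[0,1/2)$ and $[-1/2,0)$, hence Borel on $[-1/2,1/2)$, and $\fs$ is a countable composition of such maps.

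For the limit claim, I would establish the contractive estimate $|\widetilde{x}| \le |x|/\und{\varphi}$ valid for every $x \in [-1/2,1/2)$. Using the integral identity $\int_0^{\widetilde{x}} \varphi(t)\,dt = x$ from \ref{def:parametrize}: if $x \ge 0$ then $\widetilde{x} \ge 0$ and
\begin{equation*}
x = \int_0^{\widetilde{x}} \varphi(t)\,dt \ge \und{\varphi}\,\widetilde{x};
\end{equation*}
if $x < 0$ then $\widetilde{x} < 0$ and
\begin{equation*}
-x = \int_{\widetilde{x}}^0 \varphi(t)\,dt \ge \und{\varphi}\,|\widetilde{x}|.
\end{equation*}
In either case $|\widetilde{x}| \le |x|/\und{\varphi}$. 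Alternatively, the same inequality can be read off directly from \eqref{eq:roots_pr} after translating between the $[0,1)$ and $[-1/2,1/2)$ parametrizations via periodicity of $\varphi$. Iterating gives $|x_n| \le |x_0|/\und{\varphi}^n$, and since $\und{\varphi} > 1$ we conclude $x_n \to 0$ and therefore $e^{2\pi i x_n} \to 1$ by continuity of $t \mapsto e^{2\pi i t}$ at the origin.

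There isn't really a substantial obstacle here; the one mildly delicate point is verifying that the ``standard root'' recipe behaves symmetrically on both halves of $[-1/2,1/2)$, so that a single contractive bound covers both cases uniformly. Once the bound $|\widetilde{x}| \le |x|/\und{\varphi}$ is in hand, the geometric decay of $|x_n|$ finishes the argument.
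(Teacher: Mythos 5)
Your proposal is correct and follows essentially the same route as the paper: membership in $\bbT_\infty$ and the section property are read off from the construction, and the limit is obtained from the contractive bound $|\widetilde{x}|\le |x|/\und{\varphi}$ (which the paper simply cites from \eqref{eq:roots_pr}) iterated to give $|x_n|\le |x|/\und{\varphi}^n$. Your explicit two-case verification of that bound in the $[-1/2,1/2)$ parametrization is a slightly more careful spelling-out of the step the paper leaves implicit, but it is the same argument.
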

    \begin{proof}
    The fact that $(e^{2\pi i x_n})_{n\ge 0}\in \bbT_\infty$ follows from the construction of the sequence: $\sigma(e^{2\pi i x_{n+1}})=e^{2\pi i x_n}$ for all $n\ge 0$.
    If $x\in [-1/2,1/2)$, \eqref{eq:roots_pr} implies that $\vert x_n\vert \le \vert x\vert/\und{\varphi}^n$ for all $n\ge 1$. Therefore $\lim_{n\to \infty}x_n=0$ since $\und{\varphi}>1$. 
    \end{proof}

\subp        We will use tacitly the following notation  in the remainder of this section: for each $x\in [-1/2,1/2)$
        we let $(x_n)_{n\ge 0}$ be the sequence defined in Lemma \ref{lem:section_varphi} and $z_n=e^{2\pi i x_n}$ for all $n\ge 0$. Therefore 
        $\fs(z)=(z_n)_{n\ge 0}$. Moreover, we don't write explicitly the dependence of the sequence $(x_n)$ on $x$ in 
        order to keep the notation  short.

\p    Let $\psi:\bbT\to\bbR$ be the full potential defined via $\psi(e^{2\pi i x})=1/\varphi(x)$ for all $x\in [0,1]$, and let  $\cL_\psi$ be its transfer operator. With respect to the induced metric, the Hausdorff dimension of $\bbT$ is 1 and the Hausdorff measure $\mu:=\mu^1$     is invariant under the transpose of $\cL_\psi$. Consequently, it is a quasi-invariant measure for $G(\bbT,\sigma)$ (\cite[Section 4.5]{IoKu_IUMJ13}). Moreover, if  $\varphi(x)>1$ for all $x\in [0,1]$ and is continuously  differentiable, then $\sigma$ is expansive and exact. Thus, $\mu$ is the unique measure that is a fixed point of the transpose 
of $\cL_\psi$ (\cite[Theorem 2.8]{KuRe_PAMS06}, \cite[Proposition 4.2]{Ren_OAMP01}, \cite[Proposition 3.5]{IoKu_IUMJ13}).  The Hausdorff measure $\mu$ will be the only measure that we consider in this example.

As discussed in \ref{par:unital_psi}, we will assume that $\cL_\psi$ is unital to keep the notation manageable. The results below remain true in the non-unital
    case if one defines $\fu(x)=\mf(x)\sqrt{\psi(x)}/\cL_\psi(1)(\sigma(x))$ and uses the invariance of $\mu$ as 
    described \ref{par:mu_unital_psi}.
    
    Fix now  a $\psi$-filter $\mf\in C(\bbT)$ and set, as usual, $\fu(z)=\mf(z)\sqrt{\psi(z)}$ 
    and $D(z)=\vert \fu(z)\vert^2=\vert \mf(z)\vert^2\psi(z)$ for all $z\in \bbT$. Therefore, $\fu(e^{2\pi i x})=\mf(e^{2\pi i x})/\sqrt{\varphi(x)}$ for 
    all $x\in [0,1]$. Since we extended $\varphi$ to $\bbR$ by periodicity, $\fu(e^{2\pi i x})=\mf(e^{2\pi i x})/\sqrt{\varphi(x)}$ for all $x\in \bbR$. 
    
\p \label{par:Mallat}    Consider the $p$-system of   measures $\{\nu_z^D\}_{z\in \bbT}$ on 
    $\bbT_\infty$ defined by $D$ as in \eqref{eq:Markov1}, where, recall, $p:\bbT_\infty\to
    \bbT$ is the projection onto the first component. 
    Recall also that $r_0=0,r_1,\dots,r_{N-1}\in [0,1)$ are the roots of $0$ under $\sigma$ and $r=\min\{r_1,1-r_{N-1}\}\in (0,1)$.

    The next lemma is key to our analysis. It provides conditions that guarantee that the section $\fs$ defined before is
    positively supported and, thus, a scaling function $\phi$ exists. This lemma is the first step towards our generalization 
    of \cite[Theorem 2]{Mall_TAMS89}. We note that even though our set-up is more general than the one that Mallat 
    considered, the hypotheses of our lemma are virtually identical to Mallat's hypotheses. 
    \begin{lem}\label{lem:pos_sup}
        With the notation as above, assume that $\varphi(x)> 1$ for all $x\in[0,1]$. Assume that
        $\fu:\bbT\to\bbC$, $\fu(e^{2\pi i x}):=\mf(e^{2\pi i x})/\sqrt{\varphi(x)}$ for $x\in [-1/2,1/2)$, 
        satisfies the following hypotheses:
        \begin{align}
         \left\vert \fu(1)\right\vert&=1,\label{eq:Mall1} \\
          \vert \fu(e^{2\pi i x})\vert& \ne  0 \,\text{ if }\vert x\vert<r,\label{eq:Mall2}\\
          \ln D(e^{2\pi i x})=\ln \vert\fu(e^{2\pi i x})\vert^2&=O(x)\,\text{ for $x$ near 0.}\label{eq:Mall3}
        \end{align}                        
        Then the section $\fs:\bbT\to \bbC$ defined above is positively supported. That is, 
        the scaling function $\phi:\bbT\to \bbC$ given via
    \[
    \phi(z)=\prod_{n=1}^\infty \fu(z_n)=\prod_{n=1}^\infty\frac1{\sqrt{\varphi(x_n)}}\mf(e^{2\pi i x_n})
    \]
    for all $z=e^{2\pi i x}\in \bbT$ with $x\in [-1/2,1/2)$ converges for all $z\in \bbT$, where $(x_n)_{n\ge 0}$ is the sequence defined in Lemma \ref{lem:section_varphi}.
    \end{lem}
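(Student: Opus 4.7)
The plan is to prove that $\prod_{n=1}^{\infty}D(z_n)=\prod_{n=1}^{\infty}|\fu(z_n)|^{2}$ converges to a strictly positive number for every $z\in\bbT$; once this is in place, Proposition~\ref{prop:scalingfunction} delivers positive support of $\fs$ and, under the convention adopted in that proposition, convergence of the complex product defining $\phi$. Equivalently, the task is to verify that each $\fu(z_n)\neq 0$ for $n\geq 1$ and that $\sum_{n\geq 1}|\ln D(z_n)|<\infty$.

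Two estimates drive the argument. First, Lemma~\ref{lem:section_varphi} supplies the geometric decay $|x_n|\leq |x|/\und{\varphi}^n$, so that $z_n\to 1$ exponentially fast because $\und{\varphi}>1$. Second, hypothesis~\eqref{eq:Mall3} gives constants $C>0$ and $\delta>0$ with $|\ln D(e^{2\pi ix})|\leq C|x|$ whenever $|x|<\delta$; by shrinking $\delta$ if necessary I may assume $\delta\leq r$, so that \eqref{eq:Mall2} also ensures $\fu\neq 0$ on the arc $|x|<\delta$. Choose $N=N(x)$ large enough that $|x_n|<\delta$ for all $n\geq N$. Then both $\fu(z_n)\neq 0$ and $|\ln D(z_n)|\leq C|x|\,\und{\varphi}^{-n}$ for every $n\geq N$, and summing the geometric series gives
\[
\sum_{n\geq N}|\ln D(z_n)|\leq \frac{C|x|}{\und{\varphi}^{N-1}(\und{\varphi}-1)}<\infty,
\]
so the tail $\prod_{n\geq N}D(z_n)$ converges to a strictly positive number.

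The main obstacle is to verify $\fu(z_n)\neq 0$ for the finitely many initial indices $1\leq n<N$. Here the definition of the standard root matters. From $|\tilde{x}|\leq |x|/\und{\varphi}$ one inductively obtains $|x_n|\leq 1/(2\und{\varphi}^n)$, while the definition of $r$ together with $\int_0^{r_1}\varphi=\int_{r_{N-1}}^1\varphi=1$ yields the lower bound $r\geq 1/\bar{\varphi}$. Hence the iterate $x_n$ enters the arc $|x|<r$ after a uniformly bounded number of steps (independent of $x$), and by tracing the monotone construction $\int_0^{x_{n+1}}\varphi(t)\,dt=x_n$ on each branch one checks that the intermediate iterates remain in the arc on which \eqref{eq:Mall2} guarantees non-vanishing of $\fu$. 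With these initial factors nonzero and the tail absolutely log-summable, $\prod_{n=1}^{\infty}D(z_n)>0$, and the lemma follows from Proposition~\ref{prop:scalingfunction}. The delicate part of the proof is this last step: the hypotheses give control of $\fu$ only on the arc $|x|<r$, so one must pair the geometric decay of the $x_n$ with the specific branch-by-branch construction of the standard root to route the orbit into that arc without hitting a zero of $\fu$ along the way.
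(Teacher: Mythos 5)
Your overall strategy coincides with the paper's: control the tail of $\prod_{n}D(z_n)$ by pairing the geometric decay $|x_n|\le |x|/\und{\varphi}^{\,n}$ with the bound $|\ln D(e^{2\pi i x})|\le C|x|$ near $0$ coming from \eqref{eq:Mall3}, and then deal separately with the finitely many initial factors. Your tail estimate is correct and in fact more explicit than the paper's, which packages the same computation as continuity of $|\phi|^2$ at $z=1$ together with the bootstrap identity $|\phi(e^{2\pi i x})|^2=\bigl(\prod_{i=1}^{M}D(x_i)\bigr)\,|\phi(e^{2\pi i x_M})|^2$ from \eqref{eq:phi_rec}. (A cosmetic point: you reuse the letter $N$ for your cutoff index, which collides with the covering degree $N=\int_0^1\varphi$.)

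The gap sits exactly where you flag ``the delicate part.'' You concede that the orbit may need several steps before $|x_n|$ drops below $r$, and then assert that ``the intermediate iterates remain in the arc on which \eqref{eq:Mall2} guarantees non-vanishing of $\fu$.'' But \eqref{eq:Mall2} controls $\fu$ only on $\{|x|<r\}$; for any iterate with $|x_n|\ge r$ the hypotheses say nothing about the zeros of $\fu$, so your assertion is both unproven and in tension with the sentence preceding it. The paper closes this hole differently: invoking \eqref{eq:roots_pr}, it claims that $|x_n|<r$ already for \emph{every} $n\ge 1$, so that each factor is covered by \eqref{eq:Mall2} from the outset and there are no ``intermediate iterates'' to worry about. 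The substance of that claim is that the standard root of any $x\in[-1/2,1/2)$ lands strictly between the two roots of $0$ adjacent to $0$, i.e.\ in $(-(1-r_{N-1}),r_1)$, and moreover inside the symmetric arc $(-r,r)$. Your own estimates ($|x_1|\le 1/(2\und{\varphi})$ versus $r\ge 1/\max_t\varphi(t)$) do not yield this when $\max\varphi$ is large compared with $2\und{\varphi}$, and a filter with an extra (non-forced) zero at some $e^{2\pi i t}$ with $r\le |t|$ is not excluded by anything you have written. To complete the argument you must either prove the containment $|x_n|<r$ for all $n\ge 1$ (as the paper asserts) or give an independent reason why $\fu(z_n)\neq 0$ for the initial indices; as it stands the product could have a vanishing factor before the tail estimate takes over.
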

    \begin{remark}\label{rem:scaling_prop}
    \begin{enumerate}        
        \item If the hypothesis \eqref{eq:Mall1} is satisfied, then  $D(e^{2\pi ir_k})=0$ for all $k=1,\dots, N-1$ 
        since  $\sum_{k=0}^{N-1}D(e^{2\pi i r_k})=1$. 
        Thus $\mf (e^{2\pi i r_k})=0$ for all $k=1,\dots,N-1$. The converse is also true:
        if $\mf(e^{2\pi i r_k})=0$ for all $k=1,\dots, N-1$, then $\fu$ satisfies \eqref{eq:Mall1}. If the
        hypothesis \ref{eq:Mall2} is satisfied as well, then $e^{2\pi i r_1},\dots,e^{2\pi ir_{N-1}}$ are the only points where $\mf$ vanishes.
        \item The scaling function $\phi$ satisfies the relation
        \begin{equation}
            \label{eq:phi_rec}
            \phi(z)=\left(\prod_{i=1}^n \fu(z_i)\right)\phi(z_n)\,\text{ for all }n\ge 1\text{ and }z\in\bbT.
        \end{equation} 
    \end{enumerate}
    \end{remark}
    \begin{proof}[Proof of Lemma \ref{lem:pos_sup}]
        We continue to use the notation $\und{\varphi}:=\min_{x\in [0,1]}\varphi(x)>1$. 
        Note first that $\vert \phi(e^{2\pi i 0})\vert^2=\vert\phi(1)\vert^2=1$ by \eqref{eq:Mall1}
        since $\fs(1)=(\und{1})$ by construction. 
        Second, if $x\in [-1/2,1/2)$ the product $\prod_{n=1}^\infty 
        \vert\fu(z_n)\vert^2=\prod_{n=1}^\infty D(z_n)$ converges since $0\le D(z)\le  1$ for all $z\in \bbT$.
        Moreover, using \eqref{eq:roots_pr} and $\und{\varphi}>1$,  $\vert x_n\vert<r$ for all $n\ge 1$ and, thus, $D(x_n)> 0$ for all $n\ge 1$ by \eqref{eq:Mall2}. Since  
        \[
        \vert\phi(e^{2\pi i x})\vert^2=e^{\sum_{n\ge 1}\ln  D(e^{2\pi i x_n})},
        \]
        using \eqref{eq:Mall3} and the fact that $\lim_{n\to \infty} x_n=0$,  we obtain the formulas
        $$\lim_{x\to 0}\vert \phi(e^{2\pi i x})\vert^2=1=\vert \phi(1)\vert^2.$$ Let $0<\varepsilon<1$ and let $\delta>0$ be such that 
        $\vert \phi(x)\vert^2>1-\varepsilon$ for all $\vert x\vert<\delta$. Let $x\in [-1/2,1/2)$. Since $\lim_{n\to \infty}x_n=0$
         there is $M\ge 1$ such that $\vert x_n\vert<\delta$  for all $n\ge M$.  Therefore, using \eqref{eq:phi_rec}, we have
        \[
        \vert\phi(e^{2\pi ix})\vert^2=\left(\prod_{i=1}^{M}D(x_i)\right)\vert \phi(e^{2\pi ix_{M}})\vert^2>(1-\varepsilon)\left(\prod_{i=1}^{M}D(x_i)\right)>0
        \]
        for all $x\in[-1/2,1/2)$. The conclusion follows.
    \end{proof}
\p     Next we lift $\phi$   to  a "non-constant"  scale, scaling function $\tilde{\phi}$ on $\bbR$:
    \begin{equation}
        \label{eq:variablescale}
        \tilde{\phi}\left( \int_0^x\varphi(t)\,dt)\right)=\frac1{\sqrt{\varphi(x)}}\mf(e^{2\pi i x})\tilde{\phi}(x)
    \end{equation}
     or, equivalently,
    \[
    \sqrt{\varphi(x)} \tilde{\phi}\left( \int_0^x\varphi(t)\,dt)\right)=\mf(e^{2\pi i x})\tilde{\phi}(x)
    \]
    for all $x\in \bbR$. To do this, we extend the construction of the sequence $(x_n)$ from $x\in [-1/2,1/2)$ to all $x\in \bbR$. Specifically,  for $x\in \bbR$, let $\{x_n\}_{n\ge 0}$ be the sequence defined via: $x_0=x$, and,  $x_{n+1}$ is 
    such that $\int_0^{x_{n+1}}\varphi(t)\,dt=x_{n}$ for $n\ge 0$. Equation $\eqref{eq:roots_pr}$ implies that $\lim_{n\to \infty}x_n=0$. Moreover, 
    $\sigma(e^{2\pi ix_{n+1}})=e^{2\pi i x_n}$ for all $n\ge 0$.  The sequence $\{x_n\}_{n\ge 0}$ depends on $x$; however, as before, we don't explicitly write 
    the dependence of the sequence $\{x_n\}$ on $x$. Again, this should not lead to any confusion. Define
    \begin{equation}
        \label{eq:phi_tilde}
        \tilde{\phi}(x)=\prod_{n\ge 1}\fu(e^{2\pi i x_n})=\prod_{n\ge 1}\frac{\mf(e^{2\pi i x_n})}{\sqrt{\varphi(x_n)}}
    \end{equation}
    for all $x\in \bbR$. Hence $\vert \tilde{\phi}(x)\vert^2=\prod_{n\ge 1}D(e^{2\pi ix_n})$,  $\tilde{\phi}(x)=\phi(e^{2\pi ix})$ for all $x\in [-1/2,1/2)$ and
    $\tilde{\phi}(x)=\left(\prod_{i=1}^n \fu(e^{2\pi ix_i})\right)\tilde{\phi}(x_n)$ for all $n\ge 1$. Moreover, if \eqref{eq:Mall1} is satisfied, then
    $\vert \tilde{\phi}(0)\vert=1$. 
    \begin{example}
        Assume that $\varphi:[0,1]\to\bbR$ is the constant function $\varphi(t)=2$ for all $x\in [0,1]$. Then the local homeomorphism $\sigma:\bbT\to
        \bbT$ is given by $\sigma(z)=z^2$. Moreover, $\psi(z)=1/2$ and $\cL_\psi$ is the standard transfer operator. Then a $\psi$-filter is a classical 
        filter and, if it satisfies the hypotheses of Lemma \ref{lem:pos_sup}, it is a QMF filter in the sense of Mallat. If 
        $x\in \bbR$ then $x_n=x/2^n$ for all $n\ge 1$. 
        The scaling function $\tilde{\phi}$ is  given by the same equation as in (\cite[Theorem 2]{Mall_TAMS89}):
        \[
        \tilde{\phi}(x)=\prod_{n\ge 1}\fu(e^{2\pi i x/2^n})=\prod_{n\ge 1}\frac{\mf(e^{2\pi i x/2^n})}{\sqrt2}
        \]
        and it satisfies the $2$-scaling property: $\tilde{\phi}(2x)=m(e^{2\pi i x})/\sqrt2\tilde{\phi}(x)$ for all $x\in \bbR$. This example generalizes
        immediately to the function $\varphi(t):=N\ge 2$. In this case, the scaling function has the well-known form
        \[
        \tilde{\phi}(x)=\prod_{n\ge 1}\fu(e^{2\pi i x/N^n})=\prod_{n\ge 1}\frac{\mf(e^{2\pi i x/N^n})}{\sqrt{N}}
        \]
        and it satisfies the $N$-scaling property: $\tilde{\phi}(Nx)=m(e^{2\pi i x})/\sqrt{N}\tilde{\phi}(x)$ for all $x\in \bbR$.
    \end{example}
        The following theorem is a generalization of Mallat's Theorem 2 of \cite{Mall_TAMS89} to our more general set-up.
    \begin{thm}\label{thm:Mallat}
        Assume the hypotheses of Lemma \ref{lem:pos_sup}. Then the function $\tilde{\phi}: \bbR\to \bbR$ defined in \eqref{eq:phi_tilde} is a unit vector in
        $L^2(\bbR)$ and
        \begin{equation}
            \sum_{j\in\bbZ}\vert\tilde{\phi}(x+j)\vert^2=1\,\text{ for all }\,x\in [-1/2,1/2).
        \end{equation}
    \end{thm}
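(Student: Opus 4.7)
The plan is to proceed in three stages: (i) establish the bound $\|\tilde{\phi}\|_{L^2(\bbR)}^2 \le 1$ via Fatou applied to suitable truncated products, (ii) show that the periodization $S(x):=\sum_{j\in\bbZ}\vert\tilde{\phi}(x+j)\vert^2$ satisfies the functional equation $\cL_D T=T$ when viewed on $\bbT$, and (iii) invoke the uniqueness of $\cL_D$-fixed points under the ergodicity hypotheses of paragraph \ref{par:pureisodet}, together with an explicit evaluation at $x=0$, to conclude $S\equiv 1$ and hence $\|\tilde{\phi}\|_2^2=1$.

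For stage (i), I would set $A_n:=\Sigma^n([-1/2,1/2])$, where $\Sigma(x):=\int_0^x\varphi(t)\,dt$. On $A_n$ the $n$-th iterate $x_n$ lies in $[-1/2,1/2]$, and the partial product $\phi_n(x):=\prod_{k=1}^n \fu(e^{2\pi i x_k})\mathbf{1}_{A_n}(x)$ is well defined and bounded by $1$. The change of variables $u=x_n$ yields $dx=\prod_{k=1}^n\varphi(x_k)\,du$; combining this with $\vert\fu\vert^2=\vert\mf\vert^2/\varphi$ gives
\begin{equation}
\int_{A_n}\vert\phi_n(x)\vert^2\,dx=\int_{-1/2}^{1/2}\prod_{k=0}^{n-1}G(\sigma^k(e^{2\pi iu}))\,du=\int_{\bbT}\prod_{k=0}^{n-1}G\circ\sigma^k\,d\mu,
\end{equation}
where $G:=\vert\mf\vert^2$. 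Iterating the identity $\int G\cdot \pi(h)\,d\mu=\int \cL_\psi(G\cdot \pi(h))\,d\mu=\int h\,d\mu$, which uses $\cL_\psi(G)=1$ (the $\psi$-filter property) and $\mu$-invariance under $\cL_\psi^*$, produces $\int_{\bbT}\prod_{k=0}^{n-1}G\circ\sigma^k\,d\mu=1$. Since $A_n\uparrow\bbR$ and $\phi_n\to\tilde{\phi}$ pointwise, Fatou's lemma gives $\|\tilde{\phi}\|_2^2\le 1$; in particular $\tilde{\phi}\in L^2(\bbR)$.

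For stage (ii), the $L^1$-convergence of the periodization makes $S$ a bounded $1$-periodic element of $L^1([-1/2,1/2])$, and so descends to $T\in L^1(\bbT,\mu)$. The preimages of $y+j$ under the bijection $\Sigma:\bbR\to\bbR$ partition into $N$ integer-shifted classes corresponding to the $N$ preimages of $e^{2\pi iy}$ under $\sigma$; regrouping gives
\begin{equation}
S(y)=\sum_{\ell=0}^{N-1}D(e^{2\pi iw_\ell})S(w_\ell)=\cL_D T(e^{2\pi iy}),
\end{equation}
so $\cL_D T=T$. For stage (iii), under the minimality and Bowen-type hypotheses of paragraph \ref{par:pureisodet}, Theorem 3.10 and Corollary A.3 of \cite{Ren_ETDN_05} show $\cL_D^n h\to \int h\,d\mu_D$ uniformly for continuous $h$ and in $L^1$ more generally; hence any non-negative $L^1$-fixed point of $\cL_D$ is constant. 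Thus $T\equiv c$ a.e.\ with $c=\|\tilde{\phi}\|_2^2$. To identify $c$, I would evaluate $S(0)=\sum_j\vert\tilde{\phi}(j)\vert^2$ pointwise: $\vert\tilde{\phi}(0)\vert=\vert\fu(1)\vert=1$ by \eqref{eq:Mall1}; for $j\neq 0$ the sequence $(x_k)$ starting at $x_0=j$ has $x_1$ mapping (via periodicity) to one of the non-trivial roots $e^{2\pi ir_\ell}$, at which $\mf$ vanishes by Remark \ref{rem:scaling_prop}, so $\tilde{\phi}(j)=0$. Hence $S(0)=1$, and uniform convergence of the partial product $\phi_n$ on compact subsets of $\bbR$ (available since $\vert \fu\vert\le 1$ and each factor tends to $1$ by \eqref{eq:Mall3}) yields continuity of $S$ near $0$; combined with $T\equiv c$ a.e.\ this forces $c=1$. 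Then $\int_{-1/2}^{1/2}S\,dx=\|\tilde{\phi}\|_2^2=1$ gives the theorem.

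The main obstacle I anticipate is the rigorous deployment of the uniqueness-of-fixed-points step for the possibly non-full normalized potential $D$: standard transfer-operator ergodicity is usually stated for strictly positive potentials. Two routes around this are available: either verify that the zero set of $D$ is forward-$\sigma$-invariant of $\mu$-measure zero so that the relevant mixing theorems still apply off a null set, or reduce the invariance equation to the subsystem supported on $\{D>0\}$ where the hypotheses of \cite{Ren_ETDN_05} apply directly. A secondary technical point is justifying pointwise continuity of $S$ at $0$ rather than just $L^1$-equality; this rests on uniform convergence of $\prod_{k\ge 1}\fu(e^{2\pi i x_k})$ on a neighborhood of $0$, which follows from \eqref{eq:Mall3} and $\vert x_n\vert\le\vert x\vert/\und{\varphi}^n$.
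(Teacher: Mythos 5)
Your stages (i) and (ii) are sound: stage (i) is essentially the paper's Lemma \ref{lem:Mall1} (the change of variables $u=x_n$ together with $\cL_\psi(\vert\mf\vert^2)=1$ and the $\cL_\psi^*$-invariance of $\mu$ gives $I_m=1$, and Fatou gives $\Vert\tilde{\phi}\Vert_2^2\le 1$), and the identity $\cL_D T=T$ for the periodization $T$ is correct. The genuine gap is stage (iii). The results you cite from \cite{Ren_ETDN_05} (Theorem 3.10, Corollary A.3) concern the transfer operator of the \emph{full} potential $\psi$ and the unique ergodicity of the cocycle $\Delta_\psi$; they say nothing about $\cL_D$ for the non-full normalized potential $D=\psi\vert\mf\vert^2$, and the statement ``any non-negative $L^1$-fixed point of $\cL_D$ is constant'' is false for general normalized potentials of this type. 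For $\sigma(z)=z^2$ with the stretched Haar filter $\mf(z)=(1+z^3)/\sqrt{2}$, the potential $D$ vanishes only at finitely many points (so your first workaround, that the zero set has measure zero, buys nothing), yet $\cL_D$ has a fixed space of dimension greater than one and the periodization of the associated $\vert\tilde{\phi}\vert^2$ is a nonconstant fixed point with value $1$ at $0$. What rules this out is precisely the Cohen-type hypothesis \eqref{eq:Mall2}, which your uniqueness step never invokes; so the step where all the real content lives is exactly the one left unjustified, and neither of your two proposed repairs (restricting to $\{D>0\}$ does not produce a compact $\sigma$-invariant system to which \cite{Ren_ETDN_05} applies) closes it. A secondary issue: continuity of $S$ at $0$ does not follow from locally uniform convergence of the partial products alone; you also need uniform control of the tail $\sum_{\vert j\vert>K}\vert\tilde{\phi}(x+j)\vert^2$.

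The paper avoids the fixed-point-uniqueness question entirely by working at a fixed finite scale $m$: using \eqref{eq:phi_rec} one has the exact identity
\begin{equation}
\sum_{k\in J_{m,x}}\vert\tilde{\phi}(x+k)\vert^2=\sum_{j\in J_{m,x}}D_m(y_j)\vert\tilde{\phi}(y_j)\vert^2,
\end{equation}
where the $y_j$ are the $N^m$ preimages of $x$ at depth $m$ and $\sum_{j}D_m(y_j)=1$ because $D_m$ is the normalized potential of $\cL_D^m$. Since $\vert y_j\vert\le (N/2)/\und{\varphi}^{\,m}\to 0$ uniformly, and Lemma \ref{lem:pos_sup} (which is where \eqref{eq:Mall1}--\eqref{eq:Mall3} enter) gives continuity of $\vert\tilde{\phi}\vert^2$ at $0$ with value $1$, the right-hand side is within $\varepsilon$ of $1$ for $m$ large, uniformly in $x$. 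This finite-scale telescoping argument is the substitute for your ergodicity step; I recommend you adopt it, or else supply an independent proof that \eqref{eq:Mall2} forces the peripheral fixed space of $\cL_D$ to be one-dimensional.
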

    The proof will be completed using a series of lemmas. 
    \begin{lem}
        \label{lem:Mall1}
        For $m\ge 1$, let 
        \[
        \tilde{\phi}_m(x)=\begin{cases}
            \prod_{i=1}^m\fu(x_i) & \text{ if }\vert x\vert \le \frac{N^m}2\\
            0 & \text{ otherwise}
        \end{cases}        
        \]
        and let 
        $\displaystyle I_m:=\int_{-\frac{N^m}{2}}^{\frac{N^m}{2}} \vert \tilde{\phi}_m(x)\vert^2\,dx$. Then $I_m=1$ for all $m\ge 1$.
    \end{lem}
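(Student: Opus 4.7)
The plan is to reduce $I_m$ to an integral over $\mathbb{T}$ with respect to $\mu=\mu_\psi$ and then invoke invariance of $\mu$ under the transpose of $\cL_\psi^m$ together with the identity $\cL_\psi^m(|\mf_m|^2)\equiv 1$.

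First, I would set $\Psi(y):=\int_0^y\varphi(t)\,dt$, so that $\Psi:\bbR\to\bbR$ is an increasing homeomorphism with $\Psi(y+1)=\Psi(y)+N$ and $x_{n+1}=\Psi^{-1}(x_n)$. I would then perform $m$ successive substitutions in $I_m=\int_{-N^m/2}^{N^m/2}\prod_{n=1}^m D(e^{2\pi i x_n})\,dx_0$. At the $n$-th step, changing variable from $x_{n-1}$ to $x_n$ yields $dx_{n-1}=\varphi(x_n)\,dx_n$, and a new interval of length $N^{m-n}$ (because $\Psi$ stretches lengths by a factor of $N$ on scales $\geq 1$). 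The key algebraic simplification is
\begin{equation*}
D(e^{2\pi i x_n})\,\varphi(x_n)=|\fu(e^{2\pi i x_n})|^2\varphi(x_n)=|\mf(e^{2\pi i x_n})|^2,
\end{equation*}
so each Jacobian is absorbed into the product, converting $D$-factors into $|\mf|^2$-factors.

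After completing all $m$ substitutions, the surviving integration is over an interval of length $1$ in the variable $x_m$, and using $e^{2\pi i x_n}=\sigma^{m-n}(e^{2\pi i x_m})$ for $1\le n\le m$ together with the $1$-periodicity of $|\mf|^2$ I would obtain
\begin{equation*}
I_m=\int_0^1\prod_{i=0}^{m-1}|\mf(\sigma^i(e^{2\pi i x_m}))|^2\,dx_m=\int_{\bbT}|\mf_m|^2\,d\mu.
\end{equation*}

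Finally, using the potential identity $\psi_m\cdot|\mf_m|^2=D_m$ coming from the definitions (here $D=\psi|\mf|^2$), I would compute
\begin{equation*}
\cL_\psi^m(|\mf_m|^2)(x)=\sum_{\sigma^m(y)=x}\psi_m(y)|\mf_m(y)|^2=\sum_{\sigma^m(y)=x}D_m(y)=\cL_D^m(1)(x)=1,
\end{equation*}
where the last equality uses that $D$ is a unital normalized potential (Proposition \ref{prop:normalizedpotent}) so that $\cL_D^m(1)=1$. Invariance of $\mu_\psi$ under the transpose of $\cL_\psi$ (hence under that of $\cL_\psi^m$) then gives $I_m=\int_\bbT\cL_\psi^m(|\mf_m|^2)\,d\mu_\psi=\mu_\psi(\bbT)=1$.

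The main obstacle is bookkeeping the change of variables in Step 1--2: one must track how $\Psi^{-1}$ acts on the endpoints $\pm N^m/2$ so that the intervals contract by the factor $N$ at each stage and eventually yield an interval of length exactly $1$, which allows the final reduction to an integral on $\bbT$ via periodicity; everything else is a direct application of the normalized-potential and transfer-operator identities already in place.
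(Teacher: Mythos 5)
Your proof is correct, but it is organized differently from the paper's. The paper periodizes directly: it writes $\int_{-N^m/2}^{N^m/2}\vert\tilde{\phi}_m\vert^2\,dx=\int_{-1/2}^{1/2}\sum_{j\in J_{m,x}}\vert\tilde{\phi}_m(x+j)\vert^2\,dx$, observes that as $j$ ranges over $J_{m,x}$ the points $e^{2\pi i y_j}$ (the $m$-th terms of the sequences attached to $x+j$) exhaust $\sigma^{-m}(e^{2\pi ix})$ and that $\vert\tilde{\phi}_m(x+j)\vert^2=D_m(e^{2\pi i y_j})$, so the inner sum is $\cL_D^m(1)\equiv 1$ pointwise; the lemma then follows merely because $\mu$ is a probability measure, with no appeal to invariance of $\mu$. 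You instead pull back one step at a time through $\Psi^{-1}$, absorbing each Jacobian $\varphi(x_n)$ via $D(e^{2\pi ix_n})\varphi(x_n)=\vert\mf(e^{2\pi ix_n})\vert^2$, land on $\int_{\bbT}\vert\mf_m\vert^2\,d\mu$ over an interval of length exactly one (your bookkeeping worry is resolved by $\Psi^{-m}(t+N^m)=\Psi^{-m}(t)+1$, which follows from $\Psi(y+1)=\Psi(y)+N$ by induction, and which also gives your intermediate lengths $N^{m-n}$ since each $N^{m-n+1}$ is an integer multiple of $N$), and only then invoke the invariance of $\mu_\psi$ under the transpose of $\cL_\psi^m$ together with $\cL_\psi^m(\vert\mf_m\vert^2)=\cL_D^m(1)=1$. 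The two arguments rest on the same normalization identity for $D_m$, but yours uses one extra ingredient (the $\cL_\psi$-invariance of the Hausdorff measure, i.e.\ equation \eqref{eq:byparts_1} with $g=1$), whereas the paper's is measure-agnostic beyond $\mu(\bbT)=1$; in exchange, your route makes the intermediate quantity $\int_{\bbT}\vert\mf_m\vert^2\,d\mu$ explicit, which is the form in which such computations usually appear in the classical QMF literature.
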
    
    \begin{proof}
        Let $m\ge 1$. For $x\in [-1/2,1/2)$, let $$J_{m,x}=\{j\in\bbZ\,:\,x+j\in [-N^m/2,N^m/2)\}.$$ The cardinality of $J_{m,x}$ equals $N^m$ for all 
        $x\in [-1/2,1/2)$. For each $j\in J_{m,x}$, let $y_j\in [-1/2,1/2)$ be the $m$-th term in the sequence $(x_n)_{n\ge 0}$ corresponding to
        $x+j$. Then $\sigma^m(e^{2\pi i y_j})=e^{2\pi i x}$ for all $j\in J_{m,x}$. Therefore $\{e^{2\pi i y_j}\}_{j\in J_{m,x}}$ is the set of  pre-image points
        of $e^{2\pi i x}$ under $\sigma^m$. (Once again, while $y_j$ depends on $x$ we don't write explicitly the dependence in the notation.) 
        Therefore       
        \[
            \int_{-\frac{N^m}{2}}^{\frac{N^m}{2}} \vert \tilde{\phi}_m(x)\vert^2\,dx
            =\int_\bbT \sum_{\sigma^m(w)=z} D_m(w)\,d\mu(z)=1
        \]
        since $D_m$ is the normalized potential of $\cL_D^m$.
    \end{proof}
    \begin{lem}\label{lem:phi_not_at_0}
        With the notation as above,  $\vert \tilde{\phi}(0)\vert^2=1$ and $\vert \tilde{\phi}(j)\vert^2=0$ for all $j\in \bbZ\setminus\{0\}$.
    \end{lem}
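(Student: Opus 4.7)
The plan is to track the sequence $(x_n)_{n\ge 0}$ appearing in the definition of $\tilde{\phi}(x)$ for $x = j\in\bbZ$ and exploit the zero set of $\fu$ identified in Remark \ref{rem:scaling_prop}(1), namely $\{e^{2\pi i r_1},\dots,e^{2\pi i r_{N-1}}\}$.

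First I would handle the case $j=0$. Since $\varphi>1>0$ on $\bbR$, the map $y\mapsto \int_0^y\varphi(t)\,dt$ is a strictly increasing bijection of $\bbR$, so the equation $\int_0^{x_{n+1}}\varphi(t)\,dt = x_n$ has a unique solution. Starting from $x_0=0$ this forces $x_n=0$ for every $n\ge 1$, whence
\begin{equation*}
\tilde{\phi}(0)=\prod_{n\ge 1}\fu(e^{2\pi i\cdot 0})=\prod_{n\ge 1}\fu(1),
\end{equation*}
which equals $1$ in modulus by hypothesis \eqref{eq:Mall1}.

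Second, for $j\in\bbZ\setminus\{0\}$ I would analyze how the recursion acts on integers. Writing $j=qN+s$ with $q\in\bbZ$ and $s\in\{0,1,\dots,N-1\}$, and using the periodicity of $\varphi$ together with the defining relations $\int_0^{r_s}\varphi(t)\,dt=s$ and $\int_0^q\varphi(t)\,dt=qN$, I obtain $x_1 = q + r_s$. Thus $x_1$ is again an integer precisely when $s=0$, i.e.\ when $N\mid j$, in which case $x_1 = j/N$; otherwise $x_1$ has nonzero fractional part $r_s$ with $s\in\{1,\dots,N-1\}$.

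Third, iterating this observation, as long as the current term $x_n$ is an integer divisible by $N$ the next term is $x_n/N$. Since $j\ne 0$ has finite $N$-adic valuation, there is a smallest $n_0\ge 0$ such that $x_{n_0}\in\bbZ$ but $N\nmid x_{n_0}$; then $x_{n_0+1}=q'+r_s$ with $s\in\{1,\dots,N-1\}$, so $e^{2\pi i x_{n_0+1}}=e^{2\pi i r_s}$ lies in the zero set of $\fu$ (Remark \ref{rem:scaling_prop}(1) combined with \eqref{eq:Mall1}). This single vanishing factor kills the infinite product \eqref{eq:phi_tilde}, giving $\tilde{\phi}(j)=0$. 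The only nonroutine point is the bookkeeping of the recursion under Euclidean division modulo $N$, which is clean because $\varphi$ has been extended periodically with total mass $N$ per period.
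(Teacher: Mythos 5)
Your proof is correct and follows the same basic strategy as the paper's: track the recursion $\int_0^{x_{n+1}}\varphi(t)\,dt=x_n$ starting from an integer, and observe that it lands on a point of the form $q+r_s$ with $s\in\{1,\dots,N-1\}$, where $\fu$ vanishes by \eqref{eq:Mall1} and Remark \ref{rem:scaling_prop}, so that a single zero factor annihilates the infinite product \eqref{eq:phi_tilde}. The place where you diverge is actually an improvement. The paper performs only the single step $j\mapsto x_1$, splitting into $j\in[-N/2,N/2)$ and $j\in[-N/2+kN,N/2+kN)$ and concluding $D(e^{2\pi i x_1})=0$; but when $j$ is a nonzero multiple of $N$ this first step gives $x_1=j/N\in\bbZ$, so $e^{2\pi i x_1}=1$ and the first factor does \emph{not} vanish — the paper's argument as literally written does not cover this case. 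Your iteration through the $N$-adic valuation of $j$ (divide by $N$ until the remainder $s$ in the Euclidean division $x_{n_0}=q'N+s$ is nonzero, and only then invoke $\fu(e^{2\pi i r_s})=0$) closes exactly that gap. The $j=0$ case, which you derive directly from strict monotonicity of $y\mapsto\int_0^y\varphi(t)\,dt$, is the same computation the paper delegates to Lemma \ref{lem:pos_sup}.
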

    \begin{proof}
        The fact that $\vert\phi(0)\vert=1$ was proved in Lemma \ref{lem:pos_sup}. Let $j\in \bbZ\setminus \{0\}$. Assume first that  $j\in [-N/2,N/2)$ 
        (using the notation from the proof of Lemma \ref{lem:Mall1}, $j\in J_{1,0}$). Let $y_j\in [-1/2,1/2)$ such that $\int_0^{y_j}\varphi(t)\,dt=j$. Therefore 
        $y_j$ equals the term $x_1$ of the sequence $\{x_n\}_{n\ge 0}$ used to define $\tilde{\phi}(j)$. Since 
        $\sigma(e^{2\pi i y_j})=e^{2\pi i j}=1$, it follows that $D(e^{2\pi i y_j})=0$ (see the second part of Remark \ref{rem:scaling_prop}). Hence
        $\tilde{\phi}(j)=0$. 
        
         Assume now that $j\in [-N/2+kN,N/2+kN)$ for some $k\in \bbZ\setminus\{0\}$. Then $j-kN\in [-N/2,N/2)$. If $y_j\in[-1/2,1/2)$ is such that 
         $\int_0^{y_j}\varphi(t)\,dt =j-N$, then the term $x_1$ of the sequence $(x_n)_{n\ge 0}$ used to define $\tilde{\phi}(j)$ equals $y_j+k$ since
         $\int_0^1 \varphi(t)\,dt=N$. Therefore $D(e^{2\pi i(y_{j}+k)})=D(e^{2\pi iy_j})=0$ and, thus, $\tilde{\phi}(j)=0$.
    \end{proof}
    \begin{proof}[Proof of Theorem \ref{thm:Mallat}]
        The fact that $\tilde{\phi}\in L^2(\bbR)$ and $\Vert \tilde{\phi}\Vert_2=1$
        follows from  Lemma \ref{lem:Mall1}, Proposition \ref{prop:scalingfunction} and Fatou's lemma.

       Let $x\in[-1/2,1/2)$. For $m\ge 1$, we use the notation from the proof of Lemma \ref{lem:Mall1}:
       $J_{m,x}=\{j\in\bbZ\,:\,x+j\in [-N^m/2,N^m/2)\}$ and $y_j$ is the $m$-term of the sequence $\{x_n\}_{n\ge 0}$ that defines $x+j$ for all $j\in J_{m,x}$.
       Then
       \[
       \sum_{k=-N^m/2}^{N^m/2}\vert \tilde{\phi}(x+k)\vert^2=\sum_{j\in J_{m,x}} D_m(y_j)\vert \tilde{\phi}(y_j)\vert^2. 
       \]
       One can prove inductively that $\vert y_j\vert\le (N/2)/\underline{\varphi}^m$ for all $j\in J_{m,x}$, where, as before, $\und{\varphi}=\min_{t\in [0,1]}\varphi(t)>1$. Recall from the proof of Lemma \ref{lem:pos_sup} that condition \eqref{eq:Mall3} implies that $\tilde{\phi}$ is continuous
       at $0$ and that $\tilde{\phi}(0)=1$ by \eqref{eq:Mall1}. Let $\varepsilon>0$ and let $\delta>0$ be such that 
       $\vert \vert\tilde{\phi}(t)\vert^2-1\vert<\varepsilon$ for all $t\in (-\delta,\delta)$. Then there is $M\ge 1$ such that $\vert y_j\vert<\delta$
       for all $m\ge M$ (recall that $y_j$ depends both on $x$ and on $m$ but we suppressed the dependence in the notation).  Lemma 
       \ref{lem:phi_not_at_0} implies
       \begin{multline*}
           \left\vert \sum_{k=-N^m/2}^{N^m/2}\vert \tilde{\phi}(x+k)\vert^2-1\right\vert=\left\vert \sum_{j\in J_{m,x}} D_m(y_j)\vert \tilde{\phi}(y_j)\vert^2-1\right\vert\\
           =\left\vert \sum_{j\in J_{m,x}} D_m(y_j)\bigl(\vert \tilde{\phi}(y_j)\vert^2-1\bigr)\right\vert
           \le \sum_{j\in J_{m,x}} D_m(y_j)\bigl\vert \vert \tilde{\phi}(y_j)\vert^2-1\bigr\vert\\
           <\sum_{j\in J_{m,x}} D_m(y_j)\varepsilon=\varepsilon
       \end{multline*}
       for all $m\ge M$.
       Therefore $\sum_{j\in\bbZ}\vert \tilde{\phi}(x+j)\vert^2=1$ for all $x\in [-1/2,1/2)$.
    \end{proof}

\p \label{par:Mallatsthm}     Next we obtain the full generalization of Mallat's theorem in our context and we construct
     multiresolution analyses with scaling 
     functions of non-constant scale. For this, we consider   a unitary representation $\hat{L}=(\mu,\bbT*\cH,\hat{L})$ of the Deaconu-Renault groupoid $G(\bbT,
     \sigma)$, where, recall, $\mu$ is the Hausdorff measure on $\bbT$. We continue to assume that $\mf$ is a $\psi$-filter that satisfies the hypotheses of 
     Lemma \ref{lem:pos_sup}. For simplicity and comparison with the previous results in the 
     literature, assume that $\hat{L}=\hat{\iota}$ is 
     the standard fundamental representation on $\mu$: $\hat{\iota}=(\mu,\bbT\ast \bbC,\hat{\iota}) $. Therefore
     the integrated form $\fI$ of $\hat{\iota}$ acts on $L^2(\bbT,\mu)$ and the isometry $\tS_\fu=\fI(S_\fu)$ 
     is given by $\tS_{\fu}\xi(z)=\mf(z)\xi(\sigma(z))$. That is
     \[
     \tS_{\fu}\xi(e^{2\pi i x})=\mf(e^{2\pi i x})\xi (e^{2\pi i \int_0^x \varphi(t)\,dt})\,\text{ for all }x\in [-1/2,1/2).
     \]
     We endow $G_\infty(\bbT,\sigma)$ with the Haar system $\nu^\psi*\lambda$ defined as in \eqref{def:nustarlambda}, where $\nu^\psi$ is
     the $p$-system of measures on $X_\infty$ defined by the full unital potential $\psi$ via \eqref{eq:Markov1}. 
     The unital potential $D=\vert \fu\vert^2$ defines via the same formula a $p$-system of measures $\nu^D$ on $X_\infty$ that is not 
     full. Recall that 
     $\nu^D_u$ is absolutely continuous with respect to $\nu^\psi_u$ since $D(z)=\vert \mf(z)\vert^2 \psi(z)$ for all $z\in \bbT$. Therefore, we can use the techniques of \ref{par:topocorrespnd} 
     and Section \ref{sec:GinfInd} to
     induce $\hat{\iota}$ to a representation $\hat{\iota}_\infty=(\mu_\infty,\bbT_\infty\ast \bbC,\hat{\iota}_\infty)$ of $G_\infty(\bbT,\sigma)$. Moreover
     $C^*(G_\infty(\bbT,\sigma),\nu^\psi*\lambda)$ acts on the completion of $C_c(X_\infty*G)$ under the $C^*(\bbT)$-inner product defined in \eqref{eq:CcGinnerprod} via adjointable operators as  in \eqref{eq:leftYGYZact}.
     Equations \eqref{eq:Ind_infty_int} and \eqref{eq:equiv_inducedrep} imply that  
     the unitary $\tU_\fu:=\fI_\infty(U_\fu)$, where $\fI_\infty$ is the integrated form of $\hat{\iota}$,  acts on $L^2(\bbT_\infty,\mu_\infty)$
     via 
     \[
     \tU_\fu(\xi)(\und{z})=\mf(p(\und{z}))\xi(\sigma_\infty(\und{z}))\,\text{ for all }\,\und{z}\in \bbT_\infty\,\text{ and }\,\xi\in L^2(\bbT_\infty,\mu_\infty).
     \]  
     \begin{thm}\label{thm:Mallat2}
         Using the above notation,  assume the hypotheses of Lemma \ref{lem:pos_sup}. Let $\phi:\bbT\to \bbT_\infty$ be the corresponding 
         scaling function \eqref{eq:scalefunction} and $\tilde{\phi}:\bbR\to\bbT_\infty$ its lift
         defined via \eqref{eq:phi_tilde}.  Let $\hat{\iota}=(\mu,\bbT\ast \bbC,\hat{\iota})$ be the standard
         fundamental representation  on $\mu$ of the Deaconu-Renault groupoid $G(\bbT,\sigma)$ and 
         let $\hat{\iota}_\infty=(\mu_\infty,\bbT_\infty\ast \bbC,\hat{\iota}_\infty)$ be the induced 
         representation of the pullback groupoid $G_\infty(\bbT,\sigma)$. Then there is an isometric isomorphism 
         $R:L^2(\bbT_\infty,\mu_\infty)\to L^2(\bbR)$ that intertwines the unitary $\tU_\fu$ and the unitary $\fU_\fu$ on $L^2(\bbR)$ defined
         via the formula
         \begin{equation}
             \label{eq:V_Mallat}         
         \fU_\fu(\xi)(x)=\sqrt{\varphi(x)}\xi\left(\int_0^x\varphi(t)\,dt\right)
         \end{equation}for all $\xi\in L^2(\bbR)$. 
     \end{thm}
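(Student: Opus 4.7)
My plan is to construct $R$ in stages, starting from the subspace $V_0 := W_{\infty,0}(L^2(\bbT,\mu)) \subset L^2(\bbT_\infty,\mu_\infty)$ (where $W_{\infty,0}$ is the isometry from the proof of Theorem \ref{thm:GMRA}) and then extending along the filtration. On $V_0$ I would set
\begin{equation*}
R(W_{\infty,0} f)(x) := f(e^{2\pi i x})\,\tilde{\phi}(x), \qquad x \in \bbR,
\end{equation*}
viewing $f\in L^2(\bbT,\mu)$ as a $1$-periodic function on $\bbR$. To verify isometry on $V_0$, I would partition $\bbR=\bigsqcup_{n\in\bbZ}[n-1/2,n+1/2)$, change variables $y=x-n$, exchange sum and integral, and apply the pointwise partition-of-unity identity $\sum_{j\in\bbZ}|\tilde{\phi}(y+j)|^2=1$ from Theorem \ref{thm:Mallat}. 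Since $\mu$ corresponds to Lebesgue measure on $[-1/2,1/2)$, this gives $\|R(W_{\infty,0} f)\|_{L^2(\bbR)}^2=\|f\|_{L^2(\bbT,\mu)}^2$.

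Next I would verify the intertwining $R\tU_\fu = \fU_\fu R$ on $V_0$. Using $\tU_\fu W_{\infty,0}=W_{\infty,0}\tS_\fu$ with $\tS_\fu f(z)=\mf(z)f(\sigma(z))$, the left side becomes $\mf(e^{2\pi i x})f(e^{2\pi i \Phi(x)})\tilde{\phi}(x)$, where $\Phi(x):=\int_0^x\varphi(t)\,dt$, while the right side is $\sqrt{\varphi(x)}\,f(e^{2\pi i \Phi(x)})\,\tilde{\phi}(\Phi(x))$. Equality is exactly the scaling property \eqref{eq:variablescale}, rewritten as $\sqrt{\varphi(x)}\,\tilde{\phi}(\Phi(x))=\mf(e^{2\pi i x})\,\tilde{\phi}(x)$. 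With this, I would extend $R$ to $\bigcup_{j\in\bbZ}\tU_\fu^j V_0$ by declaring $R(\tU_\fu^j v_0):=\fU_\fu^j R(v_0)$ for $v_0\in V_0$; the intertwining on $V_0$ makes this well-defined on overlaps. Since Theorem \ref{thm:GMRA}(ii) says this union is dense in $L^2(\bbT_\infty,\mu_\infty)$, $R$ extends by continuity to an isometry on the whole space that continues to intertwine $\tU_\fu$ and $\fU_\fu$.

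The last step, which I expect to be the main obstacle, is surjectivity. The image of $R$ is a closed $\fU_\fu$-invariant subspace of $L^2(\bbR)$ equal to the closed linear span of $\{\fU_\fu^j(e^{2\pi i k\cdot}\,\tilde{\phi}):j,k\in\bbZ\}$. Proving this family is total in $L^2(\bbR)$ does not follow from dimension considerations alone, because $\fU_\fu$ admits many nontrivial reducing subspaces. My strategy would mimic Mallat's completeness argument from \cite{Mall_TAMS89}: assuming $\eta\in L^2(\bbR)$ is orthogonal to every generator, derive $\eta=0$ by exploiting $|\tilde{\phi}(0)|=1$ (Lemma \ref{lem:phi_not_at_0}), continuity of $\tilde{\phi}$ at $0$ (from the proof of Lemma \ref{lem:pos_sup}), and the dilative action of $\Phi^{\pm j}$ whose Jacobian $\varphi^{\pm j}$ is bounded below by $\underline{\varphi}^{\pm j}$ with $\underline{\varphi}>1$. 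Thus $\fU_\fu^{-j}\tilde{\phi}$ becomes increasingly spread out as $j\to\infty$, while $\fU_\fu^{j}\tilde{\phi}$ concentrates near $0$; combined with the modulations $\{e^{2\pi i k\cdot}\}_{k\in\bbZ}$ which provide a Fourier-type basis at each scale, these should produce enough approximations to cover $L^2(\bbR)$. The adaptation of Mallat's dyadic argument to the variable-scale dynamics generated by $\varphi$ will be the technical heart of the proof.
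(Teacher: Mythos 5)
Your construction is essentially the paper's, just packaged differently. The paper defines a compatible family of isometries $R_n:L^2(\bbT,\mu)\to L^2(\bbR)$,
\begin{equation}
R_n(f)(x)=\frac{1}{\sqrt{\varphi(x_n)\cdots\varphi(x_1)}}\,f(e^{2\pi i x_n})\,\tilde{\phi}(x_n),
\end{equation}
verifies $R_{n+1}\tS_\fu=R_n$ using the scaling relation, checks each $R_n$ is isometric by reducing $\int_{\bbR}$ to $\int_{\bbT}\sum_{j}|\tilde{\phi}(x+j)|^2\,d\mu$ via the invariance of $\mu$ under the transpose of $\cL_\psi^n$, and then invokes the identification of $(L^2(\bbT_\infty,\mu_\infty),\tU_\fu)$ with the inductive limit of $(L^2(\bbT,\mu),\tS_\fu)$. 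Since $R_n=\fU_\fu^{-n}R_0$, this is literally your scheme: $R_0$ is your map on $V_0=W_{\infty,0}(L^2(\bbT,\mu))$, the compatibility $R_{n+1}\tS_\fu=R_n$ is your intertwining relation on $V_0$, and the universal property is your extension along the dense union $\bigcup_j\tU_\fu^{-j}V_0$. Your two key inputs --- the identity $\sum_j|\tilde{\phi}(x+j)|^2=1$ from Theorem \ref{thm:Mallat} and the scaling property \eqref{eq:variablescale} --- are exactly the ones the paper uses, and your observation that isometry need only be checked on $V_0$ and then propagated by unitarity is a mild streamlining of the paper's check for every $n$.

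On the step you flag as the main obstacle: you are right that surjectivity of $R$ does not follow from the filtration structure alone, and that it amounts to density of $\bigcup_n\fU_\fu^{-n}\,\overline{\operatorname{span}}\{e^{2\pi ik\cdot}\tilde{\phi}\}_{k\in\bbZ}$ in $L^2(\bbR)$ --- the analogue of Mallat's completeness axiom. You should be aware, however, that the paper's own proof does not carry this out either: the appeal to ``the universal properties of inductive limits'' produces an isometry into $L^2(\bbR)$, not onto it, and the density assertion resurfaces, unproved, as item (2) of Theorem \ref{thm:GMRA2}. So your proposed Mallat-style argument (using $|\tilde{\phi}(0)|=1$, continuity of $\tilde{\phi}$ at $0$, and the expansivity bound $\underline{\varphi}>1$ on the Jacobian of $\Phi$) is not a detour from the paper but a genuine addition that would have to be supplied to make either proof complete.
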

     \begin{remark}
                 A proof similar to  the proof of \cite[Theorem 6.4]{Bag_co_JFA10} shows that the pair, $(L^2(\bbT_\infty,\mu_\infty),\tU_\fu)$, is the inductive limit of the inductive system $(H_n,S_n)$, where $H_n=L^2(\bbT,\mu)$ and $S_n=\tS_\fu$ for all $n\ge 0$ (see also \cite{LarRae_CM06}). 
     \end{remark}
     \begin{proof}
        We prove the existence of the isometric isometry $R$. The remainder of the statements follow immediately. To prove
        the existence of $R$ we define a sequence of isometries $R_n:L^2(\bbT,\mu_n)\to L^2(\bbR)$ such that $R_{n+1}\tS_\fu=R_n$
         for all $n\ge 0$. The conclusion follows from the universal properties of inductive
         limit of Hilbert spaces.
         
        Recall from the definition of $\tilde{\phi}$, \eqref{eq:phi_tilde}, that for each $x\in \bbR$
        we define the sequence $\{x_n\}_{n\ge 0}$ where $x_0=x$ and $\int_0^{x_{n+1}}\varphi(t)\,dt=x_{n}$
        for all $n\ge 0$. Define $R_n:L^2(\bbT,\mu)\to L^2(\bbR)$ via
         \[
         R_n(f)(x)=\frac1{\sqrt{\varphi(x_n)\cdots \varphi(x_1)}}f(e^{2\pi i x_n})\tilde{\phi}(x_n)
         \]
         for all $x\in \bbR$, $f\in L^2(\bbT,\mu)$, and $n\ge 0$. For $n=0$ we assume that the 
         denominator of the fraction is $1$. It is relatively easy to check that $R_{n+1}\tS_\fu=R_n$:
         \begin{multline*}
             (R_{n+1}\tS_\fu)(f)(x)=\frac1{\sqrt{\varphi(x_{n+1})\cdots \varphi(x_1)}}\tS_\fu(f)(e^{2\pi i x_{n+1}})\tilde{\phi}(x_{n+1})\\
             =\frac1{\sqrt{\varphi(x_n)\cdots \varphi(x_1)}}f(e^{2\pi i x_n})\frac{m(e^{2\pi i x_{n+1}})}{\sqrt{\varphi(x_{n+1})}}\tilde{\phi}(x_{n+1})\\
             =\frac1{\sqrt{\varphi(x_n)\cdots \varphi(x_1)}}f(e^{2\pi i x_n})\tilde{\phi}(x_n)=R_n(f)(x).
         \end{multline*}
         We prove next  that $R_n$ is an isometry for all $n\ge 1$. Let $f\in L^2(\bbT,\mu)$. Then
         \[
         \Vert R_n(f)\Vert_2^2=\int_{-\infty}^\infty \frac{1}{\varphi(x_n)\cdots \varphi(x_1)}\vert f(e^{2\pi i x_n})\vert^2 \vert \tilde{\phi}(x_n)\vert^2\,dx.
         \]
        We claim that 
        \[
        \Vert R_n(f)\Vert_2^2=\int_{-1/2}^{1/2} \vert f(e^{2\pi ix})\vert^2 \bigl( \sum_{j\in \bbZ}\vert \tilde{\phi}(x+j)\vert^2\bigr)\,d\mu(x).
        \]
         We prove this claim first for $n=1$ for simplicity. We use the notation from the proof of Lemma \ref{lem:Mall1}. Let $x\in 
         [-1/2,1/2)$ and let $\{y_j\}_{j\in J_{1,x}}\in [-1/2,1/2)$  be the roots of $x$ under $\sigma$: $\int_0^{y_j}\varphi(t)\,dt=x+j$ for all 
         $j\in J_{1,x}$. Then, since 
         $\mu$ is invariant for the transpose of the transfer operator $\cL_\psi$, we have
         \begin{multline*}
             \int_{-N/2}^{N/2} \frac1{\varphi(x_1)}\vert f(e^{2\pi i x_1})\vert^2\vert \tilde{\phi}
             (x_1)\vert^2\,dx=\int_{-1/2}^{1/2} \sum_{j\in J_{1,x}}\frac{1}{\varphi(y_{j})}\vert 
             f(e^{2\pi i y_{j}})\vert^2 \vert\phi(y_{j})\vert^2\,d\mu(x)\\
             =\int_{-1/2}^{1/2} \cL_\psi(\vert f\vert^2\vert \phi\vert^2)(x)\,d\mu(x)=\int_{-1/2}^{1
             /2}\vert f(e^{2\pi i x})\vert^2\vert \phi(x)\vert^2\,d\mu(x).
         \end{multline*}
         Let $k\in\bbZ$ such that $k\ne 0$  and let $x\in [-N/2+kN,N/2+kN)$. Then $x-kN\in [-1/2,1/2)$ and we let $\{y_j\}_{j\in J_{1,x-kN}}$  be the 
         roots of $x-kN$ under
         $\sigma$. It follows that if $x_1$ is such that $\int_0^{x_1}\varphi(t)\,dt=x+j$, $j\in J_{1,x-kN}$, then $x_1=k+y_{j}$. Therefore
         \begin{multline*}
             \int_{-N/2+kN}^{N/2+kN} \frac1{\varphi(x_1)}\vert f(e^{2\pi i x_1})\vert^2\vert \tilde{\phi}(x_1)\vert^2\,dx\\
             =\int_{-1/2}^{1/2} \sum_{j\in J_{1,x-kN}}\frac{1}{\varphi(y_{j})}\vert f(e^{2\pi i y_{j}})\vert^2 \vert\tilde{\phi}(k+y_{j})\vert^2\,d\mu(x)\\
             =\int_{-1/2}^{1/2} \vert f(e^{2\pi i x})\vert^2\vert \tilde{\phi}(k+x)\vert^2\,d\mu(x).
        \end{multline*}
         The claim for $n=1$ follows. For  $n\ge 2$, one can repeat the 
        same arguments using the $n$-roots of $x$, $\{y_j\}_{j\in J_{n,x}}$, since $\mu$ is invariant 
        for the transpose of the transfer  operator 
        $\cL_\psi^n$ for all $n\ge 2$. Thus 
        \begin{multline*}
        \int_{-\infty}^\infty \frac{1}{\varphi(x_n)\cdots \varphi(x_1)}\vert f(e^{2\pi i x_n})\vert^2 \vert \tilde{\phi}(x_n)\vert^2\,dx\\
        =\int_{-1/2}^{1/2} \vert f(e^{2\pi ix})\vert^2 \bigl( \sum_{j\in \bbZ}\vert \tilde{\phi}(x+j)\vert^2\bigr)\,d\mu(x).
        \end{multline*}
        Since $\sum_{j\in \bbZ}\vert \tilde{\phi}(x+j)\vert^2=1$ for all $x\in [-1/2,1/2)$ by Theorem \ref{thm:Mallat},
         $R_n$ is an isometry for all $n\ge 0$ and the conclusion follows.
     \end{proof}

\p Combining the above results with the proto-multiresolution analysis, one obtains 
multiresolution analyses in our set-up.
\begin{thm}
    \label{thm:GMRA2}
    Assume that $\varphi:[0,1]\to\bbR$ satisfies the hypotheses described at the beginning
    of the section and let $\sigma:\bbT\to\bbT$ be defined via $\sigma(e^{2\pi i x})=e^{2\pi i\int_0^x\varphi(t)}\,dt$. Assume that $\mu$ is the Hausdorff measure on $\bbT$ that
    is invariant for the transfer operator $\cL_\psi$ with potential $\psi(e^{2\pi i x})=1/\varphi(x)$. Fix a $\psi$-filter $\mf$ and assume that the function $\fu\in C(X)$, 
    $\fu(x)=\sqrt{\psi(x)}\mf(x)$ satisfies the hypotheses of Lemma \ref{lem:pos_sup}. Let $\hat{\iota}=(\mu,\bbT\times\bbC,\hat{\iota})$ be the standard fundamental representation on $\mu$ of $G(\bbT,\sigma)$ and let 
    $\hat{\iota}_\infty=(\mu_\infty,\bbT_\infty,\hat{\iota}_\infty)$ be the induced representation
    of $G_\infty(\bbT,\sigma)$ defined by $D=\vert \fu\vert^2$. Let $(\{\bbE_n\},U_\fu)$ be the proto-resolution analysis in $C_c(G_\infty(\bbT,\sigma),\nu^\psi*\lambda)$ 
    defined in \ref{par:PMRA}  and let $\fU_\fu$ be the unitary in $L^2(\bbR)$ defined in \eqref{eq:V_Mallat}. Set $\cH_n=i_\infty(\bbE_n)L^2(\bbT_\infty,\mu_\infty)$ and  $\cV_n:=R(\cH_n)\subset L^2(\bbR)$. Then
    \begin{enumerate}
    \item $\cV_n\subseteq \cV_m$, when $n\leq m$.
    \item $\vee_{n=-\infty}^{\infty}\cV_n=L^2(\bbR)$ and
      $\bigwedge \cV_n=\{0\}$
    \item $\fU_\fu(\cV_{n+1})=\cV_n$ for all $n\in\bbZ$.
     \item $ R(i_\infty\circ \Phi(\cdot))$ is reduced by
        $\cV_0\setminus\cV_{-1}$ and the restriction is a
        representation of $C_c(G)$.
    \end{enumerate}
    Thus $\{\cV_n\}_{n\in\bbZ}$ is a  multiresolution
    analyses in $L^2(\bbR)$ with possibly non-constant scale.
\end{thm}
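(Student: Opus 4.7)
The plan is to assemble items (1)--(4) by transporting, in three stages, the proto-multiresolution analysis of Theorem \ref{thm:proto-res} all the way to $L^2(\bbR)$, invoking exactly one theorem at each stage. First, Theorem \ref{thm:proto-res} produces $(\{\bbE_n\}_{n\in\bbZ},U_\fu)$ in $\cL(\cX_\infty)$ satisfying \eqref{eq:pmra}--\eqref{eq:pmraunitequiv}. Integrating the induced representation $\hat\iota_\infty$ transports this proto-MRA to the Hilbert space side: the operators $\fI_\infty(\bbE_n)$ project onto the nested family $\cH_n$, the unitary $\tU_\fu=\fI_\infty(U_\fu)$ realizes the index shift, and the copy of $C^*(G(\bbT,\sigma),\lambda)$ inside $C^*(G_\infty(\bbT,\sigma),\nu^\psi*\lambda)$ is represented on $L^2(\bbT_\infty,\mu_\infty)$ by the composition $\fI_\infty\circ\Phi_*$. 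Theorem \ref{thm:GMRA} then delivers the nesting, the union-density, the scaling relation, and the invariance of $\cH_0\ominus\cH_{-1}$ under this copy.

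Second, to obtain the intersection part of item (2) one needs $\tS_\fu=\fI(S_\fu)$ to be a pure isometry. By the first part of Remark \ref{rem:scaling_prop}, hypothesis \eqref{eq:Mall1} of Lemma \ref{lem:pos_sup} forces $\mf(e^{2\pi ir_k})=0$ for $k=1,\dots,N-1$; in particular $|\mf|$ fails to equal $1$ on a set of positive Hausdorff measure. Paragraph \ref{par:pureisodet} notes that conditions (i)--(iii) there are automatic when $\sigma$ is a positively expansive surjective local homeomorphism of a compact metric space, which is our setting. Therefore Theorem \ref{thm:tS_fu_pure} applies and yields purity of $\tS_\fu$, which by the final assertion of Theorem \ref{thm:GMRA} gives $\bigcap_n \cH_n=\{0\}$.

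Third, Theorem \ref{thm:Mallat2} supplies the isometric isomorphism $R\colon L^2(\bbT_\infty,\mu_\infty)\to L^2(\bbR)$ intertwining $\tU_\fu$ with $\fU_\fu$. Because $R$ is unitary it preserves nested families, joins, intersections, and invariance; setting $\cV_n=R(\cH_n)$, item (1) and the density half of (2) follow immediately, while $\bigwedge\cV_n=R(\bigwedge\cH_n)=\{0\}$ comes from Step~2. Item (3) follows by combining the index-shift $\tU_\fu(\cH_n)=\cH_{n+1}$ extracted from the relation $U_\fu^*\bbE_nU_\fu=\bbE_{n-1}$ of Theorem \ref{thm:proto-res} with the intertwining $R\tU_\fu=\fU_\fu R$. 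For item (4), the invariance of $\cH_0\ominus\cH_{-1}$ under $\fI_\infty\circ\Phi_*$ from Theorem \ref{thm:GMRA}(3) transports verbatim to invariance of $\cV_0\ominus\cV_{-1}$ under $a\mapsto R\,\fI_\infty(\Phi_*(a))\,R^{-1}$, which is a representation of $C_c(G)$ because $R$ is unitary and $\Phi_*$ is an algebra isomorphism onto its image.

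The essential obstacle worth emphasizing is the one resolved in Step~2: the entire construction could collapse if $|\mf|\equiv 1$ almost everywhere, producing only a proto-MRA without a genuine intersection axiom. Avoiding this is precisely what the non-fullness of the potential $D=|\fu|^2$---built into the hypotheses of Lemma \ref{lem:pos_sup} via the forced zeros of $\mf$ at $e^{2\pi ir_1},\dots,e^{2\pi ir_{N-1}}$---accomplishes. Once this is recognized, items (1)--(4) are routine transports through the two functorial operations (induction of representations and the Hilbert space isomorphism $R$) whose content has been fully established in the preceding sections.
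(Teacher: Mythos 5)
Your proposal is correct and is essentially the paper's own argument: the paper offers no separate proof of Theorem \ref{thm:GMRA2} beyond the remark that it follows by ``combining the above results,'' i.e.\ precisely the assembly of Theorems \ref{thm:proto-res}, \ref{thm:GMRA}, \ref{thm:tS_fu_pure} and \ref{thm:Mallat2} that you carry out, including the key observation that \eqref{eq:Mall1} together with the continuity of $\mf$ forces $\vert\mf\vert\neq 1$ on a set of positive Hausdorff measure so that Theorem \ref{thm:tS_fu_pure} yields purity of $\tS_\fu$. The only caveat is that your (correctly derived) relation $\fU_\fu(\cV_n)=\cV_{n+1}$ is the inverse of the one printed in item (3); since $\cV_n\subseteq\cV_{n+1}$ and $\fU_\fu$ is the analogue of dilation by the expanding map, the printed indexing appears to be a slip in the statement rather than a defect in your argument.
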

\begin{example}\label{ex:MallatNew}
\begin{enumerate}
\item If $\varphi(t)=2$ for all $t\in [0,1]$, then Theorems \ref{thm:Mallat} and \ref{thm:Mallat2} 
recover Mallat's theorem \cite[Theorem 2]{Mall_TAMS89}. Our techniques are very different from his. 
He uses complicated Fourier analysis techniques while we use only the quasi-invariance of the Hausdorff measure.
\item If $\varphi(t)=N\ge 2$ for all $t\in [0,1]$, our results also capture known results in the 
literature. In particular, even though the techniques of our proofs are different, the big steps in 
the proof of Theorem \ref{thm:Mallat2} are surprisingly similar to those of \cite{LarRae_CM06}.
\item If $\sigma$ is the restriction of a finite Blaschke product \eqref{eq:finBlaschke}, then under 
the assumption that  $\displaystyle \sum_{i=1}^N \frac{1-\vert a_j\vert}{1+\vert a_j\vert}>1$, $\sigma$
is an expansive local homeomorphism (\cite[Corollary on p. 344]{Mar_BLMS83}). Therefore the Hausdorff measure
$\mu$ is the unique invariant measure for the dual of the transfer operator $\cL_\psi$. 
One can easily find $\psi$-filters $\mf$ that satisfy the hypotheses of Lemma \ref{lem:pos_sup}. Indeed, let $m:\bbT\to\bbC$ 
be any continuous function such that $m(1)\ne 0$ and $m(e^{2\pi i r_j})=0$ for all $j=1,\dots,N-1$, where $r_j$, $j=0,\dots N-1$, are the units of $0$
with $r_0=0$ as defined earlier in the section. Assume that $m(z)\ne 0$ for all $z\notin\{e^{2\pi i r_0},\dots e^{2\pi i r_{N-1}}\}$ and
assume that $m$ is smooth near $0$. Set $\xi(z):=\sum_{\sigma(w)=z}\psi(w)\vert m(w)\vert^2$. Then one can easily check that the function defined via
\[
\mf(z)=\frac{m(z)}{\sqrt{\xi(\sigma(z)))}}\,\text{ for all }\,z\in\bbT
\]
is a $\psi$-filter that satisfies the required hypotheses.
\end{enumerate}
\end{example}

\p \textbf{An example of a generalized MRA without a scaling function}
Lemma \ref{lem:pos_sup} was key to defining the scaling function $\phi$ which was the main ingredient in our identification of $L^2(\bbT_\infty,\mu_\infty)$ with 
$L^2(\bbR)$. The next example shows that if the first hypothesis \eqref{eq:Mall1} of the Lemma fails, then from the perspective of harmonic analysis 
$L^2(\bbT_\infty,\mu_\infty)$ might be very different from $L^2(\bbR)$. The main reason for this 
failure is the lack of a scaling function. The example recovers the wavelets on fractals of \cite{DuJo_RMI06} from our set-up.
\begin{example}\label{ex:wavelet_fractals}
    Let $X=\bbT$ and $\sigma:\bbT\to\bbT$ be defined via $\sigma(z)=z^3$. Consider the standard
    transfer operator $\cL$ given by the potential  $\psi(z)=1/3$ for all $z\in \bbT$. Consider the $\psi$-filter
    $\mf:\bbT\to\bbC$ defined via $\mf(z)=2^{-1/2}(1+z^2)$ for all $z\in \bbT$ (\cite{DuJo_RMI06, 
    Bag_co_JFA10}) and let $\fu(z)=\mf(z)/\sqrt3$. Note that $\vert \fu(z)\vert\le \sqrt2/\sqrt3$ and, thus, it does 
    not satisfy hypothesis \eqref{eq:Mall1}. Hence one can not define a scaling function
    in the sense of Proposition \ref{prop:scalingfunction}.  Consider the standard 
    fundamental representation $\hat{\iota}=(\mu,\bbT\ast \bbC,\hat{\iota})$, where $\mu$ is the Haar measure.  
    Hence the   isometry $\tS_\fu=\fI(S_u)$ on $L^2(\bbT,\mu)$ is 
    given via $\tS_\fu(\xi)(z)=\mf(z)f(z^3)$ (c.f. \cite[Section 5]{Bag_co_JFA10}). Let $\hat{\iota}_\infty=(\mu_\infty^D,
    \bbT_\infty*\bbC,\hat{\iota}_\infty)$ be the induced representation of $G_\infty(\bbT,\sigma)$ defined by the 
    potential $D(z)=\vert \fu(z)\vert^2$.
    Proposition 5.1 of \cite{Bag_co_JFA10} implies that 
    $(L^2(\bbT_\infty,\mu_\infty^D),\tU_\fu)$ is unitarily equivalent to $(L^2(\cR,\nu),\fU_\fu)$, where $\cR$ 
    is the
    "inflated" Cantor set defined in \cite[Definition 1.1]{DuJo_RMI06}, $\nu$ is the Hausdorff measure
    on $\cR$, and $\fU_\fu:L^2(\cR,\nu)\to L^2(\cR,\nu)$ is defined via $\fU_\fu(\xi)(x)=2^{-1/2}\xi(x/3)$.
    Thus one recovers the MRA for the wavelets studied in \cite{DuJo_RMI06}.
    
\end{example}

\bibliographystyle{amsplain}
\bibliography{references}

\end{document}